\newcommand{\N}{\mathbb{N}}
\newcommand{\R}{\mathbb{R}}
\newcommand{\Q}{\mathbb{Q}}
 \renewcommand{\to}{\rightarrow}
\DeclareMathOperator{\cd}{cd}
\DeclareMathOperator{\Gal}{Gal}
\newcommand{\imod}[1]{\allowbreak\mkern4mu({\operator@font mod}\,\,#1)}
\newtheorem{theorem}{Theorem}
\newtheorem*{conj*}{Conjecture}
\newtheorem{lemma}[theorem]{Lemma}
\newtheorem{corollary}[theorem]{Corollary}
\newtheorem{thm}{Theorem}[section]
\newtheorem{cor}[thm]{Corollary}
\theoremstyle{definition}
\begin{document}

 \author{Damian Sercombe}
\address{D. Sercombe, Institute of Mathematics, Hebrew University, Jerusalem 91904, Israel}
\email{damian.sercombe@mail.huji.ac.il}

\author{Aner Shalev}
\address{A. Shalev, Institute of Mathematics, Hebrew University, Jerusalem 91904, Israel}
\email{shalev@math.huji.ac.il}

\title{The length and depth of associative algebras}

\begin{abstract}
Recently there has been considerable interest in studying the length and the depth of finite groups, algebraic groups and Lie groups.
In this paper we introduce and study similar notions for algebras.
Let $k$ be a field and let $A$ be an associative, not necessarily unital, algebra over $k$. An unrefinable chain of $A$ is a chain of subalgebras $A=A_0>A_1>...>A_t=0$ for some integer $t$ where each $A_i$ is a maximal subalgebra of $A_{i-1}$. The maximal (respectively, minimal) length of such an unrefinable chain is called the length (respectively, depth) of $A$. It turns out that finite length,
finite depth and finite dimension are equivalent properties for $A$. For $A$ finite dimensional, we give a formula for the length of $A$, we bound the depth of $A$, and we study when the length of $A$ equals its dimension and its depth respectively. Finally, we investigate under what circumstances the dimension of $A$ is bounded above by a function of its length, or its depth, or its length minus its depth.
 \end{abstract}


\subjclass[2010]{Primary 20P70; Secondary 20P10}

\thanks{DS was supported by a Post-Doctoral Fellowhip from ISF grant 686/17 of AS.
AS was partially supported by ISF grant 686/17 and the Vinik Chair of mathematics which he holds.}


\maketitle

\section{Introduction}

\noindent Let $k$ be a field and let $A$ be an associative, not necessarily unital, algebra over $k$. An \textit{unrefinable} chain
of length $t$ of $A$ is a chain of subalgebras $A=A_0>A_1>...>A_t=0$ where each $A_i$ is a maximal $k$-subalgebra of $A_{i-1}$.

\vspace{2mm}\noindent The \textit{length} $l(A)$ of $A$ is the maximal length of an unrefinable chain. If there is no such a chain, or no unrefinable chain of maximal length, we set $l(A) = \infty$.

\vspace{2mm}\noindent The \textit{depth} $\lambda(A)$ of $A$ is the minimal length of an unrefinable chain. If there is no unrefinable chain we set
$\lambda(A) = \infty$. We clearly have $\lambda(A) \le l(A) \le \dim A$.

\vspace{2mm}\noindent These invariants were first introduced for finite groups in the 1960s (see \cite{BLS3} and the references therein for a comprehensive summary). More recently, length and depth have been introduced and studied for algebraic groups over algebraically closed fields in \cite{BLS2} and over $\R$ in \cite{BLS4,Se}; see also \cite{BLS1} which focuses on the depth of finite simple groups.

\vspace{2mm}\noindent Our first result is as follows.

\begin{thm}\label{finlen} The following are equivalent for an associative algebra $A$.

$(i)$ $\lambda (A) < \infty$.

$(ii)$ $l(A) < \infty$.

$(iii)$ $\dim A < \infty$.

\end{thm}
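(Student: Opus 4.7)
The implications $(iii)\Rightarrow(ii)\Rightarrow(i)$ are routine: if $\dim A<\infty$ then every strict chain of subalgebras has length at most $\dim A$, and iteratively descending from $A$ to a maximal proper subalgebra (which exists by a dimension argument) until reaching $0$ produces an unrefinable chain of length at most $\dim A$, so $l(A)\le\dim A<\infty$; and $\lambda(A)\le l(A)$ is immediate.

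The substance lies in $(i)\Rightarrow(iii)$, which I would prove by induction on $t:=\lambda(A)$. The case $t=0$ is $A=0$. For the base case $t=1$, $\{0\}$ is maximal in $A$, so $A$ has no proper nonzero subalgebra. Any nonzero $a\in A$ then generates $A$; if $a$ were transcendental over $k$, the subalgebra $\langle a\rangle\cong xk[x]$ would contain the proper nonzero subalgebra $\langle a^2\rangle\cong x^2k[x]$, contradicting the maximality of $0$. Hence $a$ is algebraic over $k$ and $A=\langle a\rangle$ is finite-dimensional.

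For the inductive step $t\ge 2$, let $A=A_0>A_1>\cdots>A_t=0$ be unrefinable. The tail shows $\lambda(A_1)\le t-1$, so by induction $\dim A_1<\infty$. It then suffices to prove the key lemma: \emph{if $B$ is a finite-dimensional maximal subalgebra of an associative $k$-algebra $A$, then $\dim A<\infty$}. I would argue this by contradiction. Assume $\dim A=\infty$ and pick $a\in A\setminus B$, so $\langle B,a\rangle=A$ by maximality. First rule out transcendental elements of $A$: any nonzero $f\in\langle a\rangle\cap B$ would have linearly independent powers $f,f^2,\dots$ inside the finite-dimensional $B$ (since $\langle a\rangle$ is a polynomial ring when $a$ is transcendental), which is impossible; hence $\langle a\rangle\cap B=0$ for any transcendental $a\in A\setminus B$, so $a^n\notin B$ for every $n\ge 1$, forcing $\langle B,a^n\rangle\supsetneq B$ and by maximality $\langle B,a^n\rangle=A$ for all $n$. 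Taking $n=2$ expresses $a$ as a combination of products of elements of $B$ and $a^2$, yielding in the commutative case a polynomial relation for $a$ over $B+k$ which, combined with the finite-dimensionality of $B+k$ as a $k$-algebra, forces $a$ to be algebraic over $k$ --- contradicting transcendence.

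Having reduced to the case where every element of $A$ is algebraic over $k$, the commutative case gives $A=B[a]$ a finitely generated $B$-module, hence $\dim A<\infty$, contradicting the standing assumption. The principal obstacle is the non-commutative case of the key lemma: even when $B$ is finite-dimensional and $a$ is algebraic over $k$, the subalgebra $\langle B,a\rangle$ can a priori be infinite-dimensional because alternating products $b_0ab_1ab_2\cdots$ may generate infinitely many linearly independent elements. To overcome this I would further exploit the maximality by applying it to subalgebras $\langle B,w\rangle$ for various words $w$ in $B$ and $a$: each such subalgebra equals either $B$ (so $w\in B$) or $A$ (so $a\in\langle B,w\rangle$), and collecting enough constraints of the first kind should force all sufficiently long alternating products into $B$, bounding $\dim\langle B,a\rangle$ and closing the contradiction.
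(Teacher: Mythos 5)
Your reduction is exactly the one the paper uses: prove $(i)\Rightarrow(iii)$ by induction along an unrefinable chain of length $\lambda(A)$, where the inductive step boils down to the key lemma that \emph{an associative $k$-algebra with a finite-dimensional maximal subalgebra is itself finite-dimensional}. Your base case and the deduction of the theorem from this lemma are both correct. At that point, however, the paper does not prove the key lemma; it cites the main result of Lee and Liu \cite{LL}, which is precisely this statement, and that is a genuinely nontrivial theorem.

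Your attempt to prove the key lemma from scratch has a real gap, which you yourself flag: the non-commutative case is left as a hope that ``collecting enough constraints \ldots should force all sufficiently long alternating products into $B$,'' which is not an argument. Even the commutative case as written is not airtight. From $\langle B,a^2\rangle = A$ you obtain a relation $a = p(a^2)$ with $p\in(B+k)[x]$ of some degree $n\ge 1$, i.e.\ a \emph{nonzero but non-monic} polynomial relation for $a$ over the finite-dimensional commutative algebra $B+k$. To conclude that $a$ is algebraic over $k$ you would want to clear the leading coefficient $p_n$, but $B+k$ is only Artinian, not a domain, and $p_n$ may be a zero-divisor or nilpotent; without monicity, integrality over $B+k$ (and hence algebraicity over $k$) does not follow. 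So the step ``yields a polynomial relation \ldots which forces $a$ to be algebraic over $k$'' needs substantially more work. The honest fix is either to carry out the analysis of \cite{LL} (or reproduce their argument), or simply to cite it as the paper does.
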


\noindent Henceforth we assume throughout this paper that all our algebras are finite dimensional. We continue to assume that our algebras are associative. Unless otherwise stated, all algebras, algebra homomorphisms and vector spaces are assumed to be over $k$.

\vspace{2mm}\noindent Define the \emph{chain difference} $\cd(A)$ of an algebra $A$ by
\[
\cd(A) = l(A) - \lambda(A).
\]

\vspace{2mm}\noindent In Theorem \ref{length} we obtain a formula for the length of any algebra. In Theorems \ref{depth} and \ref{depthfinitefields} we bound above and below the depth of a simple algebra. In Theorem \ref{dimthings} we study when the length of an algebra equals its dimension and its depth respectively. The question of bounding the dimension of an algebra in terms of its length, its depth or its chain difference is discussed in Theorems \ref{solvrad} and \ref{ld}.

\vspace{2mm}\noindent Let $D$ be a division algebra and $n$ a positive integer. Let $M_n(D)$ denote the algebra of $n \times n$ matrices with coefficients in $D$. 
Let $T_n(D)$ (resp. $U_n(D)$) denote the subalgebra of upper (resp. strictly upper) triangular matrices of $M_n(D)$. If $k$ is algebraically closed then the only division algebra over $k$ is $k$ itself. Let $E$ be a division subalgebra of $D$. We say that $D$ is \textit{Galois} over $E$ if $E$ is the set of fixed elements of a group of automorphisms acting on $D$. Let $\Gal(D/E)$ denote the group of automorphisms of $D$ that fix $E$ pointwise. See \cite{W} for a Galois theory of division algebras.

\vspace{2mm}\noindent Let $A$ be an algebra. The Jacobson radical $J(A)$ of $A$ is a nilpotent ideal of $A$. If $J(A)$ is trivial then $A$ is \textit{semisimple}. 
The quotient $A/J(A)$ is a semisimple algebra. 
We denote the dimension of $A$ as a $k$-algebra by either $\dim A$ or $[A:k]$.

\vspace{2mm}\noindent By the well-known Wedderburn's Theorem (see for instance Theorems III.$8$ and III.$9$ of \cite{A} or Theorem $2.1.8$ of \cite{R}), there is an algebra isomorphism $A/J(A) \cong \prod_{i=1}^m M_{n_i}(D_i)$ for some integers $m$, $n_1$, ..., $n_m$ and division algebras $D_1$,..., $D_m$ that is unique up to permutation of the factors.
If $n_i=1$ for each $i$ then $A$ is \textit{basic}. 
If $D_i \cong k$ for each $i$ then $A$ is \textit{split}. 
So if $A/J(A)$ is isomorphic to a direct product of copies of $k$ then $A$ is basic split. We consider (non-unital) nilpotent algebras to be basic split.

\vspace{2mm}\noindent  If $A/J(A)$ is a separable algebra (this always holds if $k$ is a perfect field) then there exists a semisimple subalgebra $S$ of $A$ such that $A=S \oplus J(A)$ as vector spaces. 
This result is called Wedderburn's Principal Theorem (Theorem III.$23$ of \cite{A} or Theorem $2.5.37$ of \cite{R}). 

\vspace{2mm}\noindent Now assume that $A$ is semisimple, where $A = \prod_{i=1}^m M_{n_i}(D_i)$. The \textit{rank} of $A$ is $r:=\sum_{i=1}^m (n_i-1)$. A \textit{Borel subalgebra} of $A$ is any conjugate of $\prod_{i=1}^m T_{n_i}(D_i)$. A \textit{parabolic subalgebra} of $A$ is any subalgebra that contains a Borel subalgebra. These are the analogues of Borel subgroups and parabolic subgroups of an algebraic group. Let $B$ be a Borel subalgebra of $A$. The \textit{parabolic length} of $A$ is defined by $l(B)+r$.

\begin{thm}\label{length} Let $A$ be an algebra, where $A/J(A) = \prod_{i=1}^m M_{n_i}(D_i)$.
Then $$l(A) = \dim J(A)+\sum\limits_{i=1}^m n_i-1+n_i(n_i-1)[D_i:k]/2+ n_il(D_i) \hspace{0.5mm}.$$
\end{thm}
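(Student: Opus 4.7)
The plan is to decompose $l(A)$ via three reductions matching the three summands in the formula.

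\textbf{Step 1.} I would first establish the identity $l(A) = l(A/J(A)) + \dim J(A)$. For the lower bound, lift a longest unrefinable chain of $A/J(A)$ via $\pi^{-1}$ to a chain $A > \cdots > J(A)$, and then extend through $J(A)$ by repeatedly taking codimension-one subspaces containing the square of the current term. Since $J(A)$ is nilpotent, any such subspace is automatically a subalgebra, and any codimension-one subalgebra is maximal; this simultaneously proves $l(N) = \dim N$ for every nilpotent algebra $N$. For the upper bound, take any unrefinable chain $A = A_0 > \cdots > A_t = 0$ and classify each step as either external (when the image $(A_i + J(A))/J(A)$ strictly shrinks) or internal. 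Using the maximality of $A_{i+1}$ in $A_i$, one checks that external steps project to an unrefinable chain in $A/J(A)$, so there are at most $l(A/J(A))$ of them; each internal step strictly decreases $\dim(A_i \cap J(A))$, so there are at most $\dim J(A)$ of them.

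\textbf{Step 2.} Next I would establish $l(\prod_{i=1}^m S_i) = \sum_i l(S_i)$ for simple algebras $S_i$. The lower bound is obtained by concatenating longest unrefinable chains of the factors one at a time: $S_1 \times S_2 \times \cdots > S_1' \times S_2 \times \cdots > \cdots > 0 \times S_2 \times \cdots > \cdots > 0$. One checks that $M \times S_2 \times \cdots$ is maximal in $S_1 \times S_2 \times \cdots$ whenever $M$ is maximal in $S_1$, because any strictly larger subalgebra of the product projects to a strictly larger subalgebra of $S_1$. The upper bound proceeds by induction on $m$ and on the dimension, analyzing the maximal subalgebras of $\prod S_i$: either products with a single maximal factor, or diagonal-type embeddings arising from isomorphisms between factors, in the latter case verifying that chains through the diagonal are no longer than the product chain.

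\textbf{Step 3.} Finally I would compute $l(M_n(D)) = (n-1) + n(n-1)[D:k]/2 + n\, l(D)$. For the lower bound, the chain $M_n(D) > P_{n-1} > \cdots > P_1 = T_n(D)$ obtained by collapsing adjacent diagonal blocks contributes $n-1$ maximal parabolic steps; applying Step 1 to $T_n(D)$, whose Jacobson radical is $U_n(D)$ of dimension $n(n-1)[D:k]/2$ with quotient $T_n(D)/U_n(D) \cong D^n$, and then Step 2 gives $l(T_n(D)) = n\, l(D) + n(n-1)[D:k]/2$. For the upper bound I would proceed by induction on $n$ and classify the maximal subalgebras of $M_n(D)$, showing that chains through non-parabolic maximal subalgebras (such as centralizers of semisimple elements or twisted forms) cannot exceed the parabolic-based length.

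Assembling Steps 1--3 yields the formula. The main obstacle is the upper bound in Step 3: maximal subalgebras of $M_n(D)$ are not all parabolic, and ruling out that exotic maximal subalgebras seed longer chains requires a careful structural analysis rather than a direct inductive dimension count. A secondary difficulty is the diagonal-type maximal subalgebras of Step 2, though these turn out not to threaten the product formula.
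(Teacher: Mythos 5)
Your roadmap matches the architecture of the paper's proof quite closely: the paper proves $l(A) = l(I) + l(A/I)$ for any ideal $I$ (Lemma \ref{ideallength}), shows $l(N) = \dim N$ for nilpotent $N$ (via the Frattini subalgebra equal to $N^2$), and assembles these into an additivity result (Corollary \ref{reduction}) that reduces the problem to a single simple factor $M_n(D)$, exactly as you do in Steps 1 and 2. Your lower bound in Step 3 via the parabolic chain down to $T_n(D)$ is also the paper's argument.

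However, there is a genuine gap precisely where you flag the "main obstacle": the upper bound $l(M_n(D)) \leq (n-1) + n(n-1)[D:k]/2 + n\,l(D)$. You write that you would classify maximal subalgebras and show non-parabolic ones cannot seed longer chains, but you do not carry this out, and this is where essentially all the content of the paper's argument lies. The paper invokes the Iovanov--Sistko classification (Theorem \ref{classthm1}): every maximal subalgebra of $M_n(D)$ is either parabolic (type $(S1)$), or a simple subalgebra $M_t(\Delta)$ with $\dim B$ dividing $\dim A$, arising as the centraliser of a minimal field extension of $Z(A)$ (type $(S2)$) or from a tensor decomposition condition (type $(S3)$). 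Handling types $(S2)$ and $(S3)$ requires two further inputs you do not mention: the inequality $t \leq n$ (Lemma \ref{tleqn}, proved via nilpotency indices) and the bound $l(\Delta) \leq \Omega([\Delta:k]) + 1$ for division algebras (Lemma \ref{multiplicativity}, from the divisibility of degrees along any unrefinable chain of division subalgebras). With these in hand the paper runs a case analysis on $t \in \{1\}$, $\{2\}$, $\{3,4,5\}$, $\{>5\}$, and separately on $n=2$, comparing $l(M_t(\Delta))$ (by induction) against $\Xi_A$. Without identifying these tools and actually closing the case analysis, the proof is incomplete. A minor secondary point: in Step 1 your claim that the external steps "project to an unrefinable chain in $A/J(A)$" is stronger than needed and not obviously true (the image of a maximal subalgebra need not be maximal); what the argument actually uses, and what the paper uses, is only that the external steps yield a strictly decreasing chain in $A/J(A)$, whose length is then bounded by $l(A/J(A))$ because any chain refines to an unrefinable one.
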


\noindent Some consequences of this theorem are listed below. The first one is immediate.

\begin{cor}\label{matrix} $l\big(M_n(k)\big) = n(n+1)/2 + n-1$.
\end{cor}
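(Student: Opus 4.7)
The plan is to apply Theorem \ref{length} directly, treating $A = M_n(k)$ as the simplest possible instance of the general formula. Since $M_n(k)$ is a simple algebra, its Jacobson radical vanishes, so $\dim J(A) = 0$ and the Wedderburn decomposition of $A/J(A)$ collapses to a single factor: $m = 1$, $n_1 = n$, $D_1 = k$, and $[D_1 : k] = 1$.

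The only remaining ingredient is $l(k)$. Since $k$ is one-dimensional as an algebra over itself, its only subalgebras are $0$ and $k$, so the unique unrefinable chain of $k$ is $k > 0$, giving $l(k) = 1$. Substituting these values into the formula of Theorem \ref{length} yields
\[
l\bigl(M_n(k)\bigr) = 0 + (n-1) + \frac{n(n-1)\cdot 1}{2} + n\cdot 1 = \frac{n(n-1)}{2} + 2n - 1,
\]
and a routine rearrangement shows this equals $\frac{n(n+1)}{2} + n - 1$, as required.

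There is no substantive obstacle: all the work is packaged inside Theorem \ref{length}, and the corollary is merely the cleanest evaluation of that formula, where the Jacobson radical is trivial, the Wedderburn product has a single factor, and the division algebra is the ground field itself.
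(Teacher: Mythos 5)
Your proposal is correct and matches the paper's approach exactly: the paper declares this corollary "immediate" from Theorem \ref{length}, and you carry out precisely that substitution, with $J(A)=0$, $m=1$, $D_1=k$, $l(k)=1$, followed by the routine algebraic simplification $n(n-1)/2 + 2n - 1 = n(n+1)/2 + n - 1$.
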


\noindent Note that the first summand on the RHS is $\dim T_n(k)$ and the second is $r$, the rank of $M_n(k)$.

\vspace{2mm}\noindent The next corollary is an analogue of Theorem $1$ of \cite{BLS2}, which gives a formula for the length of an algebraic group over an algebraically closed field.

\begin{cor}\label{lengthcor0} Let $A$ be an algebra and let $B$ be a Borel subalgebra of $A/J(A)$. Then $l(A)=\dim J(A)+l(B)+ r$, where $r$ is the rank of $A/J(A)$. If $A$ is split then $l(B)=\dim B$.
\end{cor}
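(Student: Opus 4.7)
My plan is to apply Theorem \ref{length} twice: once to $A$, and once to the Borel subalgebra $B$ itself, and then to compare the two expressions. Applying Theorem \ref{length} to $A$ directly gives
$$
l(A) = \dim J(A) + \sum_{i=1}^m (n_i-1) + \sum_{i=1}^m \bigl( n_i(n_i-1)[D_i:k]/2 + n_i\, l(D_i) \bigr),
$$
and the first sum on the right is precisely $r$, the rank of $A/J(A)$.

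Next I would compute $l(B)$ by another application of Theorem \ref{length}. Since $B$ is conjugate to $\prod_{i=1}^m T_{n_i}(D_i)$, its Jacobson radical is $J(B) = \prod_i U_{n_i}(D_i)$, of total dimension $\sum_{i=1}^m n_i(n_i-1)[D_i:k]/2$, and the quotient $B/J(B) \cong \prod_{i=1}^m D_i^{n_i}$. The key observation is that in the Wedderburn decomposition of $B/J(B)$ every matrix block has size $1$, so the rank summand in the length formula contributes nothing. Theorem \ref{length} therefore yields
$$
l(B) = \sum_{i=1}^m n_i(n_i-1)[D_i:k]/2 + \sum_{i=1}^m n_i\, l(D_i),
$$
and comparing this with the expression above for $l(A)$ gives $l(A) = \dim J(A) + l(B) + r$.

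For the split statement, each $D_i = k$, so $[D_i:k]=1$ and $l(D_i) = l(k) = 1$, since the only $k$-subspaces of the one-dimensional space $k$ (and hence the only $k$-subalgebras) are $0$ and $k$. Substituting into the expression for $l(B)$,
$$
l(B) = \sum_{i=1}^m n_i(n_i-1)/2 + \sum_{i=1}^m n_i = \sum_{i=1}^m n_i(n_i+1)/2 = \sum_{i=1}^m \dim T_{n_i}(k) = \dim B.
$$
There is essentially no genuine obstacle here beyond careful bookkeeping; the only point that requires attention is the observation that the Wedderburn blocks of $B/J(B)$ all have matrix size $1$, which is precisely what makes the rank contribution disappear on the $B$ side and so allows the two applications of Theorem \ref{length} to line up.
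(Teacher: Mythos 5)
Your proof is correct and takes essentially the same approach as the paper: both apply Theorem \ref{length} to $A$ and to the Borel subalgebra $B$ (noting $B/J(B)\cong\prod_i D_i^{n_i}$ so all Wedderburn blocks have size $1$) and then compare the resulting formulas. The split case is likewise handled identically by specializing $D_i=k$ and $l(D_i)=1$.
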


\noindent If $A$ is a semisimple algebra then, by Corollary \ref{lengthcor0}, the length of $A$ is equal to its parabolic length. That is, there is an unrefinable chain of $A$ of maximal length which passes through a Borel subalgebra.


\vspace{2mm}\noindent Combining Theorem \ref{length} with Galois theory of division algebras we deduce the following.

\begin{cor}\label{lengthcor} Let $A$ be a simple algebra, where $A=M_n(D)$ for some division algebra $D$ that is Galois over $k$. Then $l(A) \geq (\dim A)/2$ if and only if $[D:k] \in \{1,2,3,4,6,8\}$ or $[D:k]=5$ and $n>1$.
\end{cor}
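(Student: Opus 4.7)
The plan is to apply Theorem~\ref{length} to the simple algebra $A = M_n(D)$, reduce the inequality $l(A) \ge (\dim A)/2$ to a bound on $l(D)$, and then translate that bound into a group-theoretic condition on $G := \Gal(D/k)$ via the noncommutative Galois correspondence of \cite{W}. Since $A$ is simple, $J(A)=0$ and Theorem~\ref{length} with $m=1$, $n_1=n$, $D_1=D$, and $d := [D:k]$ yields
\[
l(A) \;=\; (n-1) + \frac{n(n-1)d}{2} + n\, l(D).
\]
Using $\dim A = n^2 d$, a direct rearrangement shows that $l(A) \ge (\dim A)/2$ is equivalent to $l(D) \ge d/2 - 1 + 1/n$.

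Next I would compute $l(D)$. Any finite-dimensional $k$-subalgebra $S$ of $D$ is in fact a unital division subalgebra containing $k$: for any nonzero $s \in S$, the injective $k$-linear map $x \mapsto sx$ on $S$ is surjective by finite dimensionality, which forces $1, s^{-1} \in S$ and hence $k = k \cdot 1 \subseteq S$. Consequently the unique minimal nonzero subalgebra of $D$ is $k$ itself, and the Jacobson--Cartan correspondence \cite{W} identifies the subalgebras of $D$ containing $k$, inclusion-reversingly and bijectively, with the subgroups of $G$, where $|G| = d$. Writing $\ell(G)$ for the length of $G$ as a finite group, we obtain $l(D) = \ell(G) + 1$, so the condition reduces to $\ell(G) \ge d/2 - 2 + 1/n$.

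Finally I would settle this group-theoretic inequality. Since each step in an unrefinable chain of subgroups divides the order by at least $2$, $\ell(G) \le \log_2 d$; an elementary estimate shows $\log_2 d < d/2 - 2$ for every $d \ge 11$, ruling out those values. For $d \le 10$, every group of order $d$ is solvable. By a classical result, each maximal subgroup of a finite solvable group has prime-power index, whence $\ell(G) \le \Omega(d)$, where $\Omega(d)$ denotes the number of prime divisors of $d$ counted with multiplicity; conversely any composition series of a finite solvable group has cyclic factors of prime order, yielding an unrefinable chain of length $\Omega(d)$. Thus $\ell(G) = \Omega(d)$ for every group of order $d \le 10$, and a direct tabulation confirms that $\Omega(d) \ge d/2 - 2 + 1/n$ holds for all $n \ge 1$ precisely when $d \in \{1,2,3,4,6,8\}$, holds for $d = 5$ if and only if $n \ge 2$, and fails for $d \in \{7,9,10\}$. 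The main obstacle will be the identification $l(D) = \ell(G)+1$, which combines the structural observation that finite-dimensional subalgebras of a division algebra are unital division subalgebras containing $k$ with the noncommutative Galois correspondence of \cite{W}; once this is secured, the remainder is just the clean $\log_2 d$ bound for $d \ge 11$ together with the short enumeration for $d \le 10$.
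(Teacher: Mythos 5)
Your proof is correct and follows essentially the same route as the paper's: both apply Theorem \ref{length} to reduce the claim to the inequality $l(D) \geq [D:k]/2 - 1 + 1/n$, then pass to the Galois group $\Gamma = \Gal(D/k)$ via the noncommutative Galois correspondence of \cite{W} (this is precisely the content of the paper's Lemma \ref{multiplicativity}, which gives $l(D) = l(\Gamma)+1$ and $l(\Gamma) = \Omega([D:k])$ when $\Gamma$ is solvable), and finally check the resulting numerical condition on $\Omega([D:k])$ for small degrees. The only cosmetic difference is in how the failing range is dispatched: you split at $[D:k]=11$ and use the crude bound $l(\Gamma) \leq \log_2[D:k] < [D:k]/2 - 2$ there, whereas the paper handles $[D:k]=7$ and all $[D:k]\geq 9$ uniformly via the sharper estimate $l(D) \leq \Omega([D:k])+1$ from Lemma \ref{multiplicativity}.
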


\noindent
In particular, the inequality $l(A) \ge (\dim A)/2$ holds if $k$ is algebraically closed.

\vspace{2mm}\noindent Next, we discuss the depth and the chain difference of algebras.

\vspace{2mm}\noindent Let $\overline{k}$ denote the algebraic closure of $k$. If $[\overline{k}:k]$ is finite then either $k=\overline{k}$ or $k$ has characteristic $0$, $[\overline{k}:k]=2$ and $\overline{k} = k(i)$ with $i^2 = -1$; this is a result of Artin communicated to us by E. de Shalit, see Corollary VIII.$9.2$ of \cite{L}. Examples of fields $k$ that satisfy $[\overline{k}:k]=2$ are the real numbers and the real algebraic numbers.

\begin{thm}\label{depth} Let $A=M_n(D)$ where $D$ is a division algebra. Then $\lambda(A) \leq 6 \log_2 n+\lambda(D)$. If $k$ is algebraically closed then $\lambda(A) \geq 3 \log_2 n+1$. If $[\overline{k}:k]=2$ then $\lambda(A) \geq \log_2 (\dim A)+1$.
\end{thm}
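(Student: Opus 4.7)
The plan is to prove the three assertions in turn.

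\textbf{Upper bound.} I would construct an explicit unrefinable chain by induction on $n$. The main halving uses three maximal steps when $n=2m$ is even:
$$M_{2m}(D) \supset T_{m,m}(D) \supset M_m(D)\times M_m(D) \supset \Delta \cong M_m(D),$$
where $T_{m,m}(D)$ is the parabolic stabilizer of an $m$-dimensional subspace (hence maximal), the block-diagonal subalgebra is maximal in $T_{m,m}(D)$ because the off-diagonal block $M_{m\times m}(D)$ is a simple $(M_m(D),M_m(D))$-bimodule, and the diagonal $\Delta$ is maximal in $M_m(D)\times M_m(D)$ by a standard ideal-generation argument using simplicity of $M_m(D)$. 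For odd $n=2m+1$ I would use a five-step variant that descends to $M_m(D)\times D$ via $T_{m,m+1}(D)$, then $M_m(D)\times M_{m+1}(D)$, then $M_m(D)\times T_{m,1}(D)$, then $M_m(D)\times(M_m(D)\times D)$, and finally $\Delta(M_m(D))\times D$ (also maximal by the same kind of simplicity argument). Iterating down to $n=1$ yields a state of the form $M_1(D)\times D^s = D^{s+1}$, where $s$ is the number of odd halvings ($s\le\log_2 n$); $s$ further diagonal steps $D^{j+1}\supset\Delta\times D^{j-1}\cong D^j$ collapse $D^{s+1}$ to $D$, and a shortest chain of $D$ contributes $\lambda(D)$ more steps. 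Bookkeeping gives a total of at most $3e + 5o + s + \lambda(D) = 3e + 6o + \lambda(D) \le 6\log_2 n + \lambda(D)$, where $e,o$ denote the numbers of even and odd halvings.

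\textbf{Lower bound when $k=\overline{k}$.} I would use strong induction on $n$, with base $n=1$ giving $\lambda(k)=1$. Over $k=\overline{k}$ the maximal subalgebras of $M_n(k)$ are exactly the parabolic subalgebras $T_{n_1,n_2}(k)$, so any shortest chain begins $M_n(k) \supset T_{n_1,n_2}(k)$ with WLOG $n_2 \ge n/2$. The key sublemma is
$$\lambda(T_{n_1,n_2}(k)) \;\ge\; \lambda(M_{n_2}(k)) + 2,$$
which I would establish by applying Wedderburn's principal theorem to classify the maximal subalgebras of $T_{n_1,n_2}(k)$ (either a Levi complement $\cong M_{n_1}(k)\times M_{n_2}(k)$, or $S'\oplus J$ with $S'$ maximal in $M_{n_1}(k)\times M_{n_2}(k)$ and $J$ the nilpotent radical) and of the direct product $M_{n_1}(k)\times M_{n_2}(k)$ (either factorwise refinements or, when $n_1=n_2$, graphs of algebra automorphisms). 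A case analysis shows that any chain through $T_{n_1,n_2}(k)$ requires at least two additional steps beyond reaching an embedded copy of $M_{n_2}(k)$. Combining with the inductive hypothesis $\lambda(M_{n_2}(k)) \ge 3\log_2 n_2 + 1 \ge 3\log_2 n - 2$ yields $\lambda(M_n(k)) \ge 1 + (3\log_2 n - 2) + 2 = 3\log_2 n + 1$.

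\textbf{Lower bound when $[\overline{k}:k]=2$.} The Artin theorem cited in the introduction restricts $D$ to a short list of division algebras over $k$ (for $k=\mathbb{R}$, only $\mathbb{R}$, $\mathbb{C}$, $\mathbb{H}$). I would prove the dimension-halving bound that every maximal subalgebra $B$ of $M_n(D)$ satisfies $\dim B \ge (\dim M_n(D))/2$. The maximal subalgebras are the parabolics $T_{n_1,n_2}(D)$ (of codimension $n_1 n_2[D:k] \le n^2[D:k]/4$, hence dimension at least $3(\dim M_n(D))/4$) and ``non-split'' types arising from the extension $k\subseteq\overline{k}$, such as $M_n(\mathbb{R})\subset M_n(\mathbb{C})$, $M_{n/2}(\mathbb{C})\subset M_n(\mathbb{R})$ for even $n$, and $M_n(\mathbb{C})\subset M_n(\mathbb{H})$, each of which has $k$-dimension exactly $(\dim M_n(D))/2$ by a direct computation. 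Since $\dim A_{i+1} \ge (\dim A_i)/2$ at every step, iteration gives $\lambda(M_n(D)) \ge \log_2(\dim A) + 1$.

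The principal obstacle is the lower-bound sublemma of the second part: one must classify the maximal subalgebras of the parabolic $T_{n_1,n_2}(k)$ and of the direct product $M_{n_1}(k)\times M_{n_2}(k)$ sharply enough to rule out any ``shortcut'' chain through an unexpected maximal subalgebra, and then carry out the step-counting in the induction carefully. For the third part, enumerating all maximal subalgebras of $M_n(D)$ in this slightly richer setting and verifying the halving bound for each is the main technical point.
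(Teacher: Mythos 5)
Your upper-bound construction is correct but organised differently from the paper's. The paper first descends $M_n(D) > M_n(D_1) > \cdots > M_n(k)$ (cost $\lambda(D)-1$) and then iteratively halves $M_n(k)$, discarding the leftover $k$ factor at each odd step in a single maximal step (since $M_{m-1}(k)$ has codimension $1$ in $M_{m-1}(k)\times k$), giving $6$ steps per odd halving and $3$ per even halving. You instead halve $M_n(D)$ directly, which forces you to carry the leftover $D$ factors along (they are not codimension $1$) and collapse them via diagonals at the end; your bookkeeping $3e + 6o + \lambda(D) \le 6\log_2 n + \lambda(D)$ is correct. Either route works.

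For the lower bound over $k=\overline{k}$, you have identified exactly the right sublemma, $\lambda(P_{n_1,n_2}(k)) \ge \lambda(M_{n_2}(k)) + 2$, but your proposed route via a hands-on classification of maximal subalgebras of the parabolic and of the product $M_{n_1}(k)\times M_{n_2}(k)$ is unnecessarily heavy. The paper obtains the same inequality cleanly from Lemma \ref{depthlowerbound} (if $I$ is a nonzero ideal of $A$ then $\lambda(A) \ge \lambda(A/I)+1$), applied first to $J(B)$ and then to one of the two simple factors of $B/J(B)$; this is packaged as Corollary \ref{depthlowerboundcor}. You should use that instead of re-deriving the structure theory.

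The third part has a genuine gap. You prove a dimension-halving bound only for maximal subalgebras of the simple algebra $M_n(D)$, and then assert ``$\dim A_{i+1} \ge (\dim A_i)/2$ at every step.'' But after the first step the terms $A_i$ in an unrefinable chain are no longer simple: they are parabolics, direct products, and general algebras with nontrivial Jacobson radical. Your halving bound says nothing about maximal subalgebras of those. In fact the statement you would need --- that every maximal subalgebra of \emph{every} finite-dimensional $k$-algebra has at least half the dimension, when $[\overline{k}:k]=2$ --- is strictly stronger than the theorem you are proving, and establishing it would require redoing the whole analysis of Theorem \ref{classthm2} for arbitrary algebras. The paper avoids this entirely: it inducts on $\lambda$ and only ever invokes the inductive hypothesis on \emph{simple} algebras. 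For a maximal $B$ of type (S1), i.e.\ $B = P_{r,n-r}(D)$ (not simple), it uses Corollary \ref{depthlowerboundcor} to get $\lambda(B) \ge \lambda(M_r(D)) + 2$, then applies the inductive hypothesis to the simple algebra $M_r(D)$, whose dimension is at least $\dim A/4$. For $B$ of types (S2) and (S3), $B$ is itself simple of dimension $\dim A/2$ (by Corollary \ref{classthm1cor}, since $[\overline{k}:k]=2$ forces the minimal field extension to have degree $2$), so the inductive hypothesis applies directly. That is the missing mechanism you need: reduce parabolics to their simple Levi pieces with the ideal-depth lemma, rather than trying to halve dimensions through non-simple algebras.
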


\noindent It is well known (see $\S \ref{preliminaries}$) that any division algebra $D$ contains a maximal subfield with degree $\sqrt{[D:Z(D)]}$ over $Z(D)$. So, in the notation of Theorem \ref{depth}, if $[\overline{k}:k]=2$ then $\sqrt{[D:k]} \leq 2$ and hence $2\log_2 n \leq \log_2 (\dim A) \leq 2 \log_2 n +2$.

\vspace{2mm}\noindent Combining the above result with Theorem \ref{length} we obtain the following.

\begin{cor}\label{depthcor} Let $A=M_n(D)$. Then $n \leq \sqrt{2\cd(A)+18}$.
\end{cor}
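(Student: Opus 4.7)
The plan is to combine the explicit length formula of Theorem \ref{length} with the logarithmic depth bound of Theorem \ref{depth}. Since $A = M_n(D)$ is simple we have $J(A)=0$, and Theorem \ref{length} applied with $m=1$, $n_1=n$, $D_1=D$ yields
\[
l(A) = (n-1) + \tfrac{n(n-1)[D:k]}{2} + n\, l(D),
\]
while Theorem \ref{depth} gives $\lambda(A) \leq 6\log_2 n + \lambda(D)$. Subtracting,
\[
\cd(A) \geq (n-1) + \tfrac{n(n-1)[D:k]}{2} + n\, l(D) - 6\log_2 n - \lambda(D).
\]

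Next I would strip away the $D$-dependent terms. Because $D$ is a nonzero division algebra, both $l(D)\geq 1$ and $l(D)\geq \lambda(D)$ hold, so $n\, l(D) - \lambda(D) \geq (n-1)\, l(D) \geq n-1$. Combined with $[D:k]\geq 1$, this yields
\[
\cd(A) \geq 2(n-1) + \tfrac{n(n-1)}{2} - 6\log_2 n = \tfrac{n^2+3n-4}{2} - 6\log_2 n.
\]
Comparing with the target $\cd(A) \geq (n^2-18)/2$ reduces the corollary to the elementary inequality
\[
3n + 14 \geq 12 \log_2 n \qquad \text{for every integer } n \geq 1,
\]
which I would verify by direct computation on $\{1,\dots,6\}$ together with a monotonicity argument for $n \geq 6$: the derivative of $f(n)=3n+14-12\log_2 n$ is positive for $n > 4/\ln 2 \approx 5.77$, and $f(6)\approx 0.98 > 0$.

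The most delicate point I foresee is simply that the margin in the final numerical inequality is rather tight. In particular, the cruder bound obtained by discarding the slack from $l(D)\geq 1$ (i.e.\ using only $n\, l(D)-\lambda(D)\geq 0$) would reduce matters to $n+16\geq 12\log_2 n$, which already fails at $n=6$. So it is essential to keep the additive $n-1$ coming from $l(D)\geq 1$; with that in hand the inequality holds at its worst integer $n=6$, and rearranging $\cd(A)\geq (n^2-18)/2$ gives $n\leq \sqrt{2\cd(A)+18}$.
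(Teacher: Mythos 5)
Your proof is correct and follows essentially the same route as the paper: combine the length formula of Theorem \ref{length} with the depth bound of Theorem \ref{depth}, reduce to $[D:k]\geq 1$ and $n\,l(D)-\lambda(D)\geq n-1$, and verify the resulting elementary inequality $3n+14\geq 12\log_2 n$. The paper simply states $\cd(A)\geq n^2/2+3n/2-2-6\log_2 n\geq n^2/2-9$ without elaboration; your observation that the additive $n-1$ from $l(D)\geq 1$ is indispensable (the cruder bound fails at $n=6$) is exactly the point that makes the paper's terse chain of inequalities work.
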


\noindent For any positive integer $n$, let $\Omega(n)$ be the number of prime divisors of $n$ (counting multiplicities). The following result gives an upper bound for the depth of a simple algebra over a field $k$ such that, for every $r \in \N$, there exists a field extension of $k$ of degree $r$. For example, all finite fields, $\mathfrak{p}$-adic fields and algebraic number fields satisfy this property. 

\begin{thm}\label{depthfinitefields} Let $k$ be a field such that, for every $r \in \N$, there exists a field extension of $k$ of degree $r$. Let $A$ be a simple algebra, where $A=M_n(D)$. Then $\lambda(A) \leq \lambda(D)+\min\{2\Omega(n),15\}$. In particular, $\lambda\big(M_n(k)\big) \leq 16$.
\end{thm}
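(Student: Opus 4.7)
The plan is to prove the inequality $\lambda(M_n(D)) \leq \lambda(D) + 2\Omega(n)$ by induction on $\Omega(n)$, and then separately establish the universal upper bound $\lambda(D) + 15$. The base case $n = 1$ is immediate. For the inductive step, let $p$ be a prime divisor of $n$, write $n = pm$, and use the hypothesis on $k$ to choose a field extension $E/k$ of degree $p$. The regular representation embeds $E \hookrightarrow M_p(k)$, and since $M_p(k)$ centralises the scalar copy of $D$ inside $M_p(D) \cong M_p(k) \otimes_k D$, this yields a subalgebra $E \otimes_k D \subset M_p(D)$, and hence
\[
M_m(E \otimes_k D) \subset M_m(M_p(D)) = M_n(D).
\]

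The key technical step is to show that $M_m(E \otimes_k D)$ is a maximal subalgebra of $M_n(D)$. I would do this via the double centraliser theorem applied to the central simple subalgebra $E \otimes_k D$ of $M_p(D)$, together with Skolem--Noether: the centraliser of $E \otimes_k D$ inside $M_p(D)$ is just $E$, and any strictly larger subalgebra must contain an element implementing a nontrivial conjugation on $E$, which together with $E \otimes_k D$ generates all of $M_p(D)$. Passing to $m \times m$ matrices and exploiting the matrix-unit structure propagates maximality to the inclusion $M_m(E \otimes_k D) \subset M_n(D)$. Granting this, iterating the reduction over a prime factorisation $n = p_1 \cdots p_s$ produces a chain of length $s = \Omega(n)$ from $M_n(D)$ down to $F \otimes_k D$, where $F$ is a field of degree $n$ equipped with a prime-degree subfield tower (constructed either as a generic compositum of the $E_i$ or, when $k$ is finite, as $\F_{q^n}$ with its canonical subfield lattice). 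Descending from $F \otimes_k D$ to $D$ along this subfield tower contributes $s$ further maximal steps, and a chain of length $\lambda(D)$ from $D$ to $0$ gives the total bound $2\Omega(n) + \lambda(D)$.

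For the universal bound $\lambda(D) + 15$, which becomes relevant only once $\Omega(n) \geq 8$, the two-step-per-prime cost is replaced by bundling several primes into a single composite-degree extension of $k$, chosen so that the resulting reduction collapses several layers of the chain at once. When $k$ admits an extension $E/k$ of composite degree $d$ with no intermediate subfields --- for instance, a primitive extension whose Galois closure has group $S_d$ --- the descent inside $E$ costs only a single maximal step, packing $\Omega(d)$ primes into a total cost of two. A packing argument, combined with a dedicated treatment of the finite-field subcase (where all extensions are cyclic Galois and direct bundling is unavailable, so one must instead exploit matrix-collapse steps through composite-degree subfields), yields the explicit constant $15$ as the worst case. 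I expect this bundling and the finite-field analysis to be the main obstacle, since establishing maximality of the bundled reduction steps requires delicate centraliser computations and the exact constant $15$ emerges only from a careful enumeration of the worst cases.

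The particular case $\lambda(M_n(k)) \leq 16$ then follows by applying the main bound with $D = k$ and using $\lambda(k) = 1$.
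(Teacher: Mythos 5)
The crucial gap is in the universal bound $\lambda(M_n(k)) \leq 16$ (equivalently the $\min\{\cdot,15\}$ in the statement), which is where the real content of the theorem lies, and your proposal does not contain a proof of it. You correctly sense that the prime-by-prime descent only gives $2\Omega(n)+1$, and that one needs to ``collapse'' several primes at once, but the mechanism you propose --- choosing an extension $E/k$ of composite degree with no intermediate subfields --- is unavailable precisely over the fields that make the hypothesis interesting: over a finite field every extension is cyclic and has a full lattice of intermediate subfields, so no such $E$ exists, and you offer nothing concrete in its place (``matrix-collapse steps through composite-degree subfields'' names a hope, not an argument). The paper's actual mechanism is quite different and number-theoretic: invoking Helfgott's solution of the ternary Goldbach problem to write every odd $n\geq 7$ as $p_1+p_2+p_3$ with $p_i$ prime, then descending through two maximal parabolics
\[
M_n(k) > P_{p_1+p_2,\,p_3}(k) > M_{p_1+p_2}(k)\times M_{p_3}(k) > P_{p_1,p_2}(k)\times M_{p_3}(k) > \prod_{i=1}^3 M_{p_i}(k),
\]
an unrefinable chain of length $4$, and then using that each $\lambda(M_{p_i}(k))\leq 3$ since $p_i$ is prime; this gives $13$ for odd $n\geq 7$, and a further step through $P_{n-1,1}$ handles even $n\geq 8$ with bound $16$. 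Parabolic subalgebras do not appear in your proposal at all, and without them (or some substitute) no bundling of primes works over $\F_q$.

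There are also smaller but real issues in your proof of $\lambda(M_n(D))\leq \lambda(D)+2\Omega(n)$. The paper first descends $M_n(D) > M_n(D_1) > \cdots > M_n(k)$ along an unrefinable chain of division algebras (so the question about $D$ is separated from the question about $n$), and then, over $k$, embeds a single degree-$n$ extension $L$ as a strictly maximal subfield of $M_n(k)$ via the regular representation; Lemma~\ref{strictmax} then gives the chain of centralisers $M_n(k)=C(F_j)>\cdots>C(L)=L>\cdots>0$ of length $\leq 2\Omega(n)+1$. Your version instead descends inside $M_n(D)$ one prime at a time through $M_{n/p}(E\otimes_k D)$. This introduces problems you have not addressed: if $Z(D)\neq k$ the algebra $E\otimes_k D$ need not be simple (let alone a division algebra), so $M_{n/p}(E\otimes_k D)$ need not match the shape required by Theorem~\ref{classthm1}, and the maximality you assert ``via double centraliser and Skolem--Noether'' is exactly what would need to be checked case by case --- Theorem~\ref{classthm1} tells you the only candidates are $C_A(F)$ for $F\supset Z(A)$ minimal (type $(S2)$) or the type $(S3)$ descent of centres, and your $M_{n/p}(E\otimes_k D)$ does not obviously fall into either bucket in general. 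The paper's two-stage reduction avoids these difficulties entirely, which is why it reduces to $D=k$ before ever invoking a field extension of $k$.
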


\noindent The proof of this result applies Helfgott's solution to the ternary Goldbach conjecture \cite{H} (which was applied for groups in \cite{BLS3}).


\vspace{2mm}\noindent Let $A$ be an algebra. We say that $A$ satisfies \textit{condition $(*)$} if $A/J(A) \cong M_2(k)^{\delta} \times \prod_{i=1}^m D_i$ for non-negative integers $\delta,m$ and division algebras $D_i$ such that
\begin{itemize}
\item $\cd(D_i)=0$ for each $1 \leq i \leq m$,
\item either $\delta=1$ and there does not exist a quadratic field extension of $k$ or $\delta=0$, and
\item any division subalgebra of $A/J(A)$ either embeds in $D_i$ for precisely one $i$ or is isomorphic to $k$.
\end{itemize} If $A$ is basic split (e.g. $A=T_n(k)$) then certainly $A$ satisfies condition $(*)$.

\vspace{2mm}\noindent Next, we study when the length of an algebra equals its dimension, and when its chain difference is zero.

\begin{thm}\label{dimthings} Let $A$ be an algebra. Then

$(i)$ $l(A)=\dim A$ if and only if $A/J(A)$ is a direct product of copies of $k$, $M_2(k)$ and quadratic field extensions of $k$.

$(ii)$ If $\cd(A)=0$ then $A$ satisfies condition $(*)$. Conversely, if $A$ is basic split or if $A$ is semisimple and satisfies condition $(*)$ then $\cd(A)=0$.
\end{thm}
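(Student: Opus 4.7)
The plan is to use Theorem \ref{length} throughout. For part (i), I would subtract $\dim A=\dim J(A)+\sum_i n_i^2[D_i:k]$ from the formula of Theorem \ref{length}. Since $l(D_i)\leq \dim D_i$, each summand $(n_i-1)+n_i(n_i-1)[D_i:k]/2+n_i l(D_i)-n_i^2[D_i:k]$ is non-positive, so $l(A)=\dim A$ iff each vanishes. Rearrangement yields $d_i\leq 2/n_i$ in each case, forcing $n_i\leq 2$; for $n_i=2$ it forces $D_i\cong k$, and for $n_i=1$ it reduces to $l(D_i)=\dim D_i$. The remaining task is the lemma: a division algebra $D$ with $l(D)=\dim D$ is either $k$ or a quadratic field extension of $k$. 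I would prove this by showing $D$ has no codimension-$1$ subalgebra when $\dim D\geq 3$. If $E\subset D$ contains $1$, then $D$ is free as a left $E$-module, so $\dim D-1$ divides $\dim D$, forcing $\dim D\leq 2$. If $E$ does not contain $1$, then $D=E\oplus k\cdot 1$ as vector spaces, and for any $0\neq e\in E$, writing $e^{-1}=f+c$ with $f\in E$ and $c\in k$ gives $1=ef+ce\in E$, a contradiction.

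For the converse directions of part (ii), I handle the basic split case first. Part (i) already yields $l(A)=\dim A$, so it suffices to prove $\lambda(A)\geq \dim A$, which I would establish by induction on $\dim A$ once I show that every maximal subalgebra of a basic split algebra has codimension $1$ and is itself basic split; both follow by examining the projection $A\to A/J(A)\cong k^m$ together with Wedderburn's principal theorem. For the semisimple converse under $(*)$, write $A=M_2(k)^\delta\times\prod_i D_i$. Theorem \ref{length} gives $l(A)=4\delta+\sum_i l(D_i)$. The third bullet of $(*)$ guarantees that any subalgebra of $A$ decomposes compatibly with the factor structure (since a non-$k$ division subalgebra cannot spread across distinct factors), so $\lambda(A)$ also equals this sum by an induction on the number of factors. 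The quadratic-extension clause is used precisely to ensure $\cd(M_2(k))=0$ when $\delta=1$: if $k$ admits a quadratic extension $F$, then $F$ embeds as a maximal subalgebra of $M_2(k)$ of codimension $2$, producing a short unrefinable chain and making $\cd(M_2(k))>0$.

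For the forward direction of (ii) I proceed by contrapositive: if $A$ fails $(*)$ then I exhibit two unrefinable chains of different lengths. A factor $M_n(D)$ with $n\geq 3$, or $M_2(D)$ with $D\not\cong k$, admits a long chain through a Borel subalgebra (realizing Theorem \ref{length}) and a short chain via the logarithmic depth bound of Theorem \ref{depth}; the resulting gap forces $\cd(A)>0$. For the remaining failure modes — multiple copies of $M_2(k)$, an $M_2(k)$ when $k$ has a quadratic extension, a factor $D_i$ with $\cd(D_i)>0$, or two division factors sharing a common non-$k$ division subalgebra — I use a diagonal-subalgebra construction: if a division algebra $E\not\cong k$ embeds into both $D_i$ and $D_j$, the diagonal copy of $E$ in $D_i\times D_j$ is a maximal subalgebra of codimension $\dim E$, verified by a free-module argument mirroring the $F\subset M_2(k)$ case above.

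The principal obstacle is the forward direction of (ii): exhausting the failure modes of $(*)$ and, in each, producing a genuinely unrefinable short chain of the right length requires delicate case analysis. Verifying maximality of the diagonal subalgebras depends on the "precisely one $i$" clause, and showing that small matrix algebras $M_n(D)$ with $n\geq 3$ always satisfy $\cd>0$ — in ranges where Theorem \ref{depth} does not give a tight enough separation from $l$ — may require an ad hoc chain construction rather than the depth bound alone.
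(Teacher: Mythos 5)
Your part (i) argument is a genuinely different route from the paper's and appears sound. The paper computes $l(S)$ directly for each simple factor $S=M_r(D)$ and then, for $r>1$, uses the classification of maximal subalgebras (Theorem \ref{classthm1}): types $(S2)$ and $(S3)$ have dimension dividing $\dim S$ so cannot have codimension $1$, and type $(S1)$ forces $r=2$, $D=k$; for $r=1$ the paper invokes $l(D)\leq\Omega([D:k])+1$. You instead force $l(D_i)=\dim D_i$ from Theorem \ref{length} and handle it with a self-contained lemma that a division algebra of $k$-dimension $\geq 3$ has no codimension-$1$ subalgebra — the free-module argument for $1\in E$, and the $e^{-1}=f+c$ trick for $1\notin E$ (though for the latter you could also note that any nonzero subalgebra of $D$ is itself a division ring whose idempotent identity must be $1$). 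Both routes work; yours avoids the case analysis over subalgebra types. Your part (ii) converse directions are broadly parallel to the paper's, which cites Proposition 4.2 of \cite{IS} for the basic split case and does an induction on maximal subalgebras (using Theorem \ref{classthm2}) for the semisimple case; your sketch of showing codimension $1$ directly via simple $k^m$-bimodules being $1$-dimensional is a reasonable substitute, though note the semisimple induction must also accommodate diagonal maximal subalgebras of the form $\Delta^2(1,k)$ when two factors are both $k$.

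The forward direction of (ii) has two genuine problems. First, the depth-bound route for factors $M_n(D)$ is not merely delicate but actually fails in the range you need it: for $n=3$, $D=k$, Theorem \ref{depth} gives $\lambda(M_3(k))\leq 6\log_2 3+1\approx 10.5$ while $l(M_3(k))=8$, so no separation is obtained (the same happens at $n=4$). The paper instead exhibits an explicit unrefinable chain $M_3(k)>P_{2,1}(k)>M_2(k)\times k>M_2(k)>T_2(k)>k^2>k>0$ of length $7<8$, and handles $M_2(D)$, $D\neq k$, via $A>T_2(D)>D^2>D>\cdots>0$ of length $\lambda(D)+3<l(A)$. Second, your diagonal-subalgebra claim is wrong as stated: the diagonal copy of $E$ in $D_i\times D_j$ has dimension $\dim E$, hence codimension $\dim D_i+\dim D_j-\dim E$, and it is a maximal subalgebra of $E\times E$, not of $D_i\times D_j$. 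The correct argument (as in the paper) bounds the length of any unrefinable chain of the form $D_i\times D_j>\cdots>E\times E>\Delta(E)>\cdots>k>0$ by $l(D_i\times D_j)-l(E)+1$, using additivity of $l$ to bound the segment from $D_i\times D_j$ down to $E\times E$ by $l(D_i)+l(D_j)-2l(E)$; then $\cd(D_i\times D_j)=0$ forces $l(E)\leq 1$. You also need super-additivity of $\cd$ over ideals (additivity of $l$ plus sub-additivity of $\lambda$) to pass from $\cd(S_i)>0$ for a single factor to $\cd(A)>0$; the paper records this in the proof of Theorem \ref{solvrad}.
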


\noindent If $k$ is algebraically closed then it follows from Theorem \ref{dimthings} that $\lambda(A) = \dim A$ if and only if $\lambda(A) = l(A)$. However, this is not true for all fields. For example, the quaternions $\mathbb{H}$ is a $4$-dimensional $\R$-algebra with $\lambda(\mathbb{H})=l(\mathbb{H})=3$. If $A$ is basic split then $\lambda(A)=l(A) =\dim A$.

\vspace{2mm}\noindent The notions of chain difference was similarly defined for finite groups $G$.
It was shown by Iwasawa \cite{I} in 1941 that $\cd(G) = 0$ (namely, all unrefinable chains in $G$ have the same length) if and only if
$G$ is supersolvable. Combining this with the fundamental theorem of Galois theory of division algebras \cite{W}
(see Lemma \ref{multiplicativity} below), we immediately obtain the following.

\begin{cor}\label{division} Let $D$ be a division algebra which is Galois over $k$.
Then $\cd(D)=0$ if and only if $\Gal(D/k)$ is supersolvable.
\end{cor}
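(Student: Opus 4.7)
The plan is to reduce the statement to Iwasawa's group-theoretic theorem via the Galois correspondence for division algebras (Lemma \ref{multiplicativity}).

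First I would check that every unrefinable chain of $D$ has the form $D = A_0 > A_1 > \cdots > A_{t-1} = k > A_t = 0$. For this it suffices to show that any nonzero $k$-subalgebra $A$ of the finite-dimensional division algebra $D$ contains $k$: given $0 \ne a \in A$, the minimal polynomial of $a$ over $k$ has nonzero constant term, because $a$ is invertible in $D$, so rearranging that polynomial exhibits a nonzero scalar (and indeed $a^{-1}$) as a $k$-linear combination of $a, a^2, \ldots \in A$. In particular a minimal nonzero subalgebra of $D$ contains $k$ and therefore equals $k$, so the penultimate term of any unrefinable chain is $k$.

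Next, by Lemma \ref{multiplicativity} (the fundamental theorem of Galois theory for division algebras, \cite{W}), the assignment $A \mapsto \Gal(D/A)$ is an inclusion-reversing bijection between the division $k$-subalgebras of $D$ containing $k$ and the subgroups of $G := \Gal(D/k)$, and under this bijection maximal inclusions correspond to maximal inclusions. Hence the unrefinable chains $D > A_1 > \cdots > A_{t-1} = k > 0$ in $D$ are in length-preserving bijection with the unrefinable chains $1 < H_1 < \cdots < H_{t-1} = G$ in $G$, after discarding the single bottom step $k > 0$. Consequently $l(D) = l(G)+1$ and $\lambda(D) = \lambda(G)+1$, so $\cd(D) = \cd(G)$.

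Finally, I would invoke Iwasawa's theorem \cite{I}: a finite group $G$ is supersolvable if and only if $\cd(G) = 0$. Applied with $G = \Gal(D/k)$, this yields the corollary. The main conceptual input is Lemma \ref{multiplicativity}; once the Galois correspondence identifies maximal subalgebra inclusions with maximal subgroup inclusions, the remainder is a bookkeeping argument on the single $k > 0$ step at the bottom, which cancels under the chain difference.
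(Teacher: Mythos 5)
Your proposal is correct and follows the same route as the paper: pass to the Galois group $\Gamma = \Gal(D/k)$ via Lemma \ref{multiplicativity}, observe that $l(D) = l(\Gamma)+1$ and $\lambda(D) = \lambda(\Gamma)+1$ so $\cd(D) = \cd(\Gamma)$, and invoke Iwasawa's theorem. The only difference is that you re-derive the fact that every nonzero subalgebra of $D$ contains $k$, which is already established inside the proof of Lemma \ref{multiplicativity}; the paper treats this as immediate from that lemma.
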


\noindent More results on the chain difference of finite groups and finite simple groups in particular can be found in \cite{BLS3}.
In particular, by Theorem 12 of \cite{BLS3}, if $G$ is a finite group, and $R(G)$ is the solvable radical of $G$, then
$|G/R(G)| \le 10 \cd(G)$. Thus finite groups with bounded chain difference are solvable-by-bounded.
We obtain some ring-theoretic analogues of this phenomenon.

\vspace{2mm}\noindent Any associative algebra $A$ is naturally a Lie algebra under the Lie bracket $[a, b] = ab - ba$ for $a,b \in A$. There exists a unique maximal Lie-solvable ideal of $A$, that we call the \textit{solvable radical} $R(A)$ of $A$. Clearly $R(A) \supseteq J(A)$. For $k$ algebraically closed we have $R(A)/J(A) \cong k^l$ for some $l \ge 0$ and $A/R(A) \cong \prod_{i=1}^m M_{n_i}(k)$ where $m \ge 0$ and $n_i \ge 2$.

\begin{thm}\label{solvrad} Let $A$ be an algebra. Then $\dim A/R(A) \le 9 \cd(A)$ unless $A/R(A)\cong M_2(k)$ and there does not exist a quadratic field extension of $k$.
\end{thm}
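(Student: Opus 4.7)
The plan is to reduce the inequality to a per-factor statement for the non-Lie-solvable simple summands of $A/J(A)$, and then to combine the exact length formula of Theorem~\ref{length} with carefully chosen short unrefinable chains bounding the depth.

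First I would reduce to the case $J(A)=0$. Theorem~\ref{length} gives $l(A)=l(A/J(A))+\dim J(A)$. For the depth, any unrefinable chain of $A/J(A)$ lifts through the quotient map to an unrefinable chain of $A$ ending at $J(A)$; since every subalgebra of $J(A)$ is nilpotent and hence basic split, this can be extended through $J(A)$ one dimension at a time, producing an unrefinable chain of total length $\lambda(A/J(A))+\dim J(A)$. Hence $\cd(A)\ge \cd(A/J(A))$, and since $R(A)/J(A)=R(A/J(A))$ we may assume $A$ is semisimple. Writing $A=\prod_{i=1}^m M_{n_i}(D_i)$, Theorem~\ref{length} makes the length additive over factors, while the depth satisfies $\lambda(A)\le\sum_i\lambda(M_{n_i}(D_i))$ via the chain that picks off one factor at a time (each intermediate subalgebra of the form $\prod_{j<i}M_{n_j}(D_j)\times B\times \prod_{j>i}\{0\}$ is maximal in the preceding one, by a correspondence-theorem argument). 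Hence $\cd(A)\ge\sum_i\cd(M_{n_i}(D_i))$. Since the Lie-solvable simple summands (the commutative ones) contribute nothing to $A/R(A)$, it suffices to prove the per-factor inequality
\[
 n^2[D:k]\le 9\,\cd(M_n(D))
\]
for each non-Lie-solvable simple summand $M_n(D)$, with the one exception $M_2(k)$ when $k$ has no quadratic extension.

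For this simple case I would apply Theorem~\ref{length} to get $l(M_n(D))=(n-1)+n(n-1)[D:k]/2+n\,l(D)$, and bound $\lambda(M_n(D))$ by two complementary constructions. The Borel chain $M_n(D)>T_n(D)>\cdots>0$ yields $\lambda(M_n(D))\le 1+n(n-1)[D:k]/2+n\,\lambda(D)$, whence $\cd(M_n(D))\ge n-2+n\cd(D)$; this settles $3\le n\le 6$ with $D=k$ (since $n-2\ge n^2/9$ on that range). The logarithmic bound $\lambda\le 6\log_2 n+\lambda(D)$ of Theorem~\ref{depth} settles all large $n$ by dominance of the quadratic length term. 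The case $D\ne k$ is handled analogously, exploiting a maximal subfield of $M_n(D)$ of dimension $n\sqrt{[D:Z(D)]}$ over $Z(D)$ to produce a large jump in an unrefinable chain. The remaining case $n=2,\,D=k$ is handled by embedding a quadratic field extension $E/k$ in $M_2(k)$ as a maximal $2$-dimensional subalgebra: the chain $M_2(k)>E>k>0$ has length $3$, so $\lambda(M_2(k))\le 3$ and $\cd(M_2(k))\ge 1\ge 4/9$. When no quadratic extension of $k$ exists, this construction is unavailable and $\cd(M_2(k))$ can vanish, which is precisely the stated exception.

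The main obstacle is to obtain a sufficiently sharp upper bound on $\lambda(M_n(D))$ when $n$ is small: the logarithmic bound is too weak for $n\le 4$, while the Borel-chain bound is too weak when $[D:k]$ is small. One must therefore combine several chain constructions (Borel, parabolic, maximal subfield) and check the arithmetic case by case, confirming that $M_2(k)$ over a field with no quadratic extension is the unique extremal configuration.
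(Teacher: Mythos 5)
Your reduction to the semisimple case via $\cd(A)\ge\cd(A/J(A))$ and the super-additivity $\cd(A)\ge\sum_i\cd(A_i)$ are both correct and match the paper's strategy, but there are two substantial gaps in what follows.

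\textbf{The ``Borel chain'' bound is wrong.} You claim $\lambda(M_n(D))\le 1+n(n-1)[D:k]/2+n\lambda(D)$ from the chain $M_n(D)>T_n(D)>\cdots>0$, giving $\cd(M_n(D))\ge n-2+n\cd(D)$. But $T_n(D)$ is \emph{not} a maximal subalgebra of $M_n(D)$ when $n\ge 3$: by Theorem~\ref{classthm1} the maximal subalgebras of $M_n(D)$ containing a Borel are the maximal parabolics $P_{r,n-r}(D)$, and one needs $n-1$ steps to descend to $T_n(D)$. An unrefinable chain through $T_n(D)$ therefore has length exactly $l(M_n(D))$ (this is Lemma~\ref{lowerbound}), and the bound you get after fixing the first step is $\cd(M_n(D))\ge n\cd(D)$, which is $0$ when $D=k$. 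Since, as you note, the logarithmic bound $\lambda\le 6\log_2 n+\lambda(D)$ of Theorem~\ref{depth} fails to give $n^2\le 9\cd$ for $n=2,3,4$, your case analysis has no tool left for small $n$. The paper instead extracts a linear bound from the chain constructed in the proof of Theorem~\ref{depth}, namely $\lambda(M_n(D))\le \lambda(D)+f(n)$ with $f(n)\le 2n$, which cleanly covers all $n\ge 3$ at once.

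\textbf{The per-factor reduction does not suffice when $M_2(k)$ is a factor.} Suppose $k$ has no quadratic extension (e.g.\ $k$ algebraically closed) and $A/R(A)\cong M_2(k)\times M_3(k)$. Then $\cd(M_2(k))=0$ by Lemma~\ref{simplestuff}, and one can check $\cd(M_3(k))=1$ (here $l(M_3(k))=8$ while $\lambda(M_3(k))=7$). So $\sum_i\cd(A_i)=1$ and super-additivity gives only $9\cd(A)\ge 9<13=\dim A/R(A)$, even though the theorem asserts $\dim A/R(A)\le 9\cd(A)$ in this situation, since $A/R(A)\not\cong M_2(k)$. Your proposal acknowledges ``the one exception $M_2(k)$'' but treats it as if it could be quarantined per-factor; it cannot. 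The paper's proof of Theorem~\ref{solvrad} handles exactly this configuration (the subcases $j>1$, and $j=1$ with $m>1$) by threading the $M_2(k)$ factor into a single unrefinable chain together with another factor $A_2$, exploiting that the short chain for $A_2$ passes through a copy of $M_2(k)$, so that $\lambda(A_1\times A_2)$ is strictly smaller than $\lambda(A_1)+\lambda(A_2)$. That cross-factor argument is the missing ingredient; without it the case $A/R(A)$ containing $M_2(k)$ as a proper factor over a field with no quadratic extension is not covered.
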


\noindent Note that the converse of Theorem \ref{solvrad} does not always hold. For example, let $k$ be algebraically closed and let $A$ be the subalgebra of $M_3(k)$ consisting of all matrices with no non-zero entry in the bottom row. Then $A/R(A)\cong M_2(k)$ and $\cd(A)=1$. So indeed $\dim A/R(A) \le 9 \cd(A)$.

\vspace{2mm}\noindent It is natural to ask whether $\dim A$ is bounded above in terms of $l(A)$, or even in terms of $\lambda(A)$.
It turns out that the answers to these questions are negative in general, but positive over certain fields.

\begin{thm}\label{ld}
(i) Let $k = \Q$. Then for every positive integer $N$ there exists a $k$-algebra $A$ satisfying $l(A) = 2$
and $\dim A > N$.

$(ii)$ Let $k$ be a field such that $[\overline{k}:k] < \infty$. Then there exists a function $f: \N \to \N$ such that, for every $k$-algebra $A$, $\dim A \le f(\lambda(A))$. In particular, this holds when $k$ is algebraically closed, or $\R$, or the real algebraic numbers.
\end{thm}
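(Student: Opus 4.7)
\emph{Part $(i)$.} Given $N$, choose a prime $p > N$ and let $A = \mathbb{Q}(\alpha)$ with $\alpha$ a root of $x^p - 2$ (irreducible over $\mathbb{Q}$ by Eisenstein), so $\dim A = p$. Any nonzero $\mathbb{Q}$-subalgebra $B$ of $A$ contains $1$: for $0 \ne b \in B$, $b$ is invertible in the field $A$ with $b^{-1} \in \mathbb{Q}[b]$ (since the constant term of the minimal polynomial of $b$ is nonzero), so $1 = b \cdot b^{-1} \in B$. Hence $B$ is a subfield of $A$ over $\mathbb{Q}$; since $p$ is prime, $B \in \{\mathbb{Q}, A\}$. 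Thus the unique unrefinable chain of $A$ is $A > \mathbb{Q} > 0$, of length $2$, giving $l(A) = 2$ and $\dim A = p > N$.

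\emph{Part $(ii)$.} The plan is strong induction on $d = \lambda(A)$, producing $f : \mathbb{N} \to \mathbb{N}$ with $\dim A \leq f(d)$. For the base $d = 1$, $A$ is atomic (no proper nonzero subalgebra); since $J(A)$ is always a subalgebra, atomicity forces $J(A) \in \{0, A\}$. If $J(A) = A$, then $A$ is nilpotent with $A^2 \subsetneq A$, but $A^2$ is also a subalgebra, so $A^2 = 0$, making every $k$-subspace a subalgebra and forcing $\dim A = 1$. If $J(A) = 0$, then $A$ is semisimple, hence unital, and the subalgebra $k \cdot 1$ must equal $A$. Either way, $\dim A = 1$; set $f(1) := 1$. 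For the inductive step, fix an unrefinable chain $A = A_0 > A_1 > \cdots > A_d = 0$ of length $d$. The tail is an unrefinable chain of $A_1$ of length $d-1$, so $\lambda(A_1) \leq d-1$ and $\dim A_1 \leq f(d-1)$ by induction. It remains to bound $\dim A$ in terms of $\dim A_1$ using only that $A_1$ is maximal in $A$, which reduces to the following.

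\emph{Key Lemma.} Assume $[\overline{k}:k] < \infty$. There exists a monotone $G : \mathbb{N} \to \mathbb{N}$ such that $\dim A \leq G(\dim B)$ whenever $B$ is a maximal subalgebra of a $k$-algebra $A$.

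Granting the lemma, set $f(d) := G(f(d-1))$ to close the induction. To prove it, I would use that $[\overline{k}:k] \leq 2$ by Artin's theorem (quoted in the paper), so that every division $k$-algebra $D$ satisfies $\dim_k D \leq 4$ via the maximal subfield construction. Wedderburn's principal theorem (valid as $[\overline{k}:k]<\infty$ forces $k$ perfect) decomposes $A = S \oplus J$ with $S \cong \prod_{i=1}^m M_{n_i}(D_i)$ semisimple and $J = J(A)$ nilpotent. I would then split into two cases. If $J \subseteq B$, then $B/J$ is maximal in $A/J$ by the subalgebra correspondence, and it remains to control a maximal subalgebra of a semisimple $\prod M_{n_i}(D_i)$: the codimension is bounded by a polynomial in $\sqrt{\dim(A/J)}$ using $\dim D_i \leq 4$ and the classification of maximal subalgebras of each simple factor. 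If $J \not\subseteq B$, maximality gives $B + J = A$, so the surjection $B \twoheadrightarrow A/J$ yields $\dim(A/J) \leq \dim B$; setting $J' = B \cap J$, every subalgebra $C$ with $B \subsetneq C \subseteq A$ has the form $C = B + J''$ for a $B$-sub-bimodule $J' \subsetneq J'' \subseteq J$ closed under $J$-multiplication, and maximality forces $J'' = J$. Bounding $\dim(J/J')$ in terms of $\dim B$ then amounts to bounding ``atomic'' objects in the poset of such $B$-sub-bimodules closed under multiplication, again exploiting $\dim D_i \leq 4$.

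The main obstacle will be the second case of the Key Lemma, specifically the bound on $\dim(J/J')$: a combined analysis of the $B$-bimodule structure of $J$ and its multiplicative closure is required, and the finiteness of $G$ ultimately rests on the division algebra bound $\dim_k D_i \leq 4$. It is precisely the failure of this bound over $k = \mathbb{Q}$ that allows the unbounded examples of part $(i)$.
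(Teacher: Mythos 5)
Your part $(i)$ is correct and, if anything, cleaner than the paper's: the paper takes a cyclic Galois extension $F(p)/\Q$ of prime degree $p$ and invokes the Galois correspondence of Lemma \ref{multiplicativity} to get $l(F(p)) = l(C_p)+1 = 2$, whereas you observe directly that any nonzero subalgebra of $\Q(2^{1/p})$ is a subfield and use the tower law; both yield $l(A)=2$, $\dim A = p$.

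Part $(ii)$ is where the proposal stops short. The overall architecture (strong induction on $\lambda(A)$ via a Key Lemma bounding $\dim A$ by a monotone function of $\dim B$ for $B$ maximal) is sound and is a genuine alternative to the paper's route, which first establishes the intermediate bound $\dim A/J(A) \le 2^{\lambda(A)}$ via the depth lower bounds of Theorem \ref{depth} and only then runs the induction. However, the Key Lemma is not actually proven, and the gap is exactly where you flag it: the split-type case, $J(A) \not\subseteq B$. Writing $A = A_0 \oplus J(A)$ by Wedderburn's Principal Theorem, the structure theorem for split-type maximal subalgebras (Theorem $0.1$ of \cite{IS}, quoted in the paper) says $B$ is conjugate to $A_0 \oplus H$ with $H$ a \emph{maximal $A_0$-subbimodule} of $J(A)$, hence $J(A)/H$ is a simple $A_0$-bimodule. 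The missing ingredient is a dimension bound for such a simple bimodule; the paper supplies this as Lemma \ref{modulelemma} (a simple $S$-bimodule has dimension at most $(\dim S)^2$, proved by passing to left $S \otimes_k S^{\mathrm{op}}$-modules and using Wedderburn again). Your sketch frames the problem as bounding atomic $B$-subbimodules ``closed under $J$-multiplication,'' which obscures that the clean statement is about simple $A_0$-bimodules and needs no multiplicative-closure hypothesis. You also attribute the finiteness here to the division-algebra bound $\dim_k D_i \le 4$; that bound is actually what controls the minimal-field-extension degree $m$ in the semisimple-type $(S2)/(S3)$ case, whereas the split-type bound comes purely from Lemma \ref{modulelemma} and has nothing to do with $\dim D_i$. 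So: correct overall plan and a legitimate alternative decomposition, but the central estimate is left as a conjecture rather than established.
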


\section{Preliminaries}\label{preliminaries}

\noindent Let $k$ be any field.

\vspace{2mm}\noindent Let $A$ be a simple algebra (over $k$). By Wedderburn's Theorem, we can write $A=M_n(D)$ for some positive integer $n$ and division algebra $D$.

\vspace{2mm}\noindent Some remarks on notation. Let $\alpha=(\alpha_1,...,\alpha_r)$ be a partition of $n$ (i.e. $n=\sum_{i=1}^r\alpha_i$ where $\alpha_i$ are positive integers) and suppose $r \geq 2$. Let $P_{\alpha}(D)$ (resp. $L_{\alpha}(D)$) be the subalgebra of $A$ that consists of all block upper triangular (resp. block diagonal) matrices with $r$ blocks on the diagonal such that the $i$'th block has size $\alpha_i$. 
Observe that $J\big(P_{\alpha}(D)\big)$ is the subalgebra of $A$ that consists of all block strictly upper triangular matrices with $r$ blocks on the diagonal such that the $i$'th block has size $\alpha_i$. 
So we can decompose $P_{\alpha}(D)=L_{\alpha}(D) \oplus J\big(P_{\alpha}(D)\big)$ as a direct sum of vector spaces. 
Any parabolic subalgebra of $A$ is conjugate to $P_{\alpha}(D)$ for some $\alpha$. If $r=n$ then $P_{\alpha}(D)=T_n(D)$ and $J\big(P_{\alpha}(D)\big)=U_n(D)$.

\vspace{2mm}\noindent If $r=2$ then the chain $A > P_{\alpha}(D)> L_{\alpha}(D)$ is unrefinable. 
We will usually denote this chain by $A > P_{\alpha}(D)> \prod_{i=1}^r M_{\alpha_i}(D)$. All subalgebras of $P_{\alpha}(D)$ that are isomorphic to $\prod_{i=1}^r M_{\alpha_i}(D)$ are conjugate to $L_{\alpha}(D)$, and so there is no ambiguity with this notation.

\vspace{2mm}\noindent We say that $A$ is \textit{central} if $Z(A) \cong k$. For any subalgebra $B$ of $A$, let $C_A(B)$ denote the centraliser of $B$ in $A$.

\begin{theorem}[p.$53$ of \cite{A}]\label{simplesubalgebras} Let $A$ be a central simple algebra. Let $B$ be a simple subalgebra of $A$ that contains the identity of $A$. Then $C_A(B)$ is a simple subalgebra of $A$, $C_A\big(C_A(B)\big)=B$ and $[A:k]=[B:k]\cdot[C_A(B):k]$.
\end{theorem}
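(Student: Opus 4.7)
The plan is to follow the classical double centralizer argument for central simple algebras, namely exploit the natural action of $R := B \otimes_k A^{\mathrm{op}}$ on $A$ defined by $(b \otimes a^{\mathrm{op}}) \cdot x = bxa$ (this is well-defined precisely because left multiplication by $B$ commutes with right multiplication by $A$). The centrality of $A$ over $k$ combined with the simplicity of $B$ ensures that $R$ is itself a finite-dimensional simple $k$-algebra, so by Wedderburn's Theorem $R \cong M_s(E)$ for some division $k$-algebra $E$ and integer $s \geq 1$, and $A$ decomposes as $V^n$ where $V$ is the unique simple $R$-module. This immediately yields the dimension identities
\[
\dim_k A = ns[E:k], \qquad \dim_k R = s^2 [E:k] = [B:k][A:k].
\]

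Next I would identify $\mathrm{End}_R(A)$ with $C_A(B)$. Any $\phi \in \mathrm{End}_R(A)$ in particular commutes with the right-multiplication action of $A^{\mathrm{op}}$, so $\phi(x) = \phi(1 \cdot x) = \phi(1)x$, i.e.\ $\phi$ is left multiplication by $c := \phi(1)$. Compatibility with the left $B$-action then forces $bc = cb$ for all $b \in B$, so $c \in C_A(B)$; conversely every such $c$ defines an $R$-endomorphism. Hence $C_A(B) \cong \mathrm{End}_R(A) = \mathrm{End}_R(V^n) \cong M_n(E^{\mathrm{op}})$, which is simple, proving both that $C_A(B)$ is a simple subalgebra and that $\dim_k C_A(B) = n^2[E:k]$. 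Substituting the earlier dimension identities gives
\[
[C_A(B):k] = \frac{n^2[E:k] \cdot s^2[E:k]}{s^2 [E:k]^2} = \frac{[A:k]^2}{s^2[E:k]} = \frac{[A:k]}{[B:k]},
\]
which is the dimension formula $[A:k] = [B:k] \cdot [C_A(B):k]$.

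For the double centralizer equality, I would apply the portion of the theorem already established to the simple subalgebra $C_A(B)$ (which automatically contains the identity of $A$), obtaining $[C_A(C_A(B)) : k] = [A:k] / [C_A(B):k] = [B:k]$. Since the inclusion $B \subseteq C_A(C_A(B))$ is automatic, the matching dimensions force equality. The main obstacle in the argument is really a single input: the simplicity of $B \otimes_k A^{\mathrm{op}}$, which is the step that uses centrality of $A$ in an essential way (if $A$ were only simple and not central, the tensor product could decompose and the whole module-theoretic analysis breaks down). Once this is granted, everything else is a routine book-keeping of dimensions and module structure over a simple Artinian ring.
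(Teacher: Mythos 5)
Your argument is correct: the only substantive input is the simplicity of $R=B\otimes_k A^{\mathrm{op}}$ (which indeed uses the centrality of $A$), and once $\mathrm{End}_R(A)$ is identified with $C_A(B)$ via $c\mapsto(x\mapsto cx)$, the dimension count $[A:k]=[B:k]\cdot[C_A(B):k]$ and the double-centralizer equality follow exactly as you state. Note that the paper does not prove this result at all — it quotes it from p.~53 of Albert's \emph{Structure of Algebras} — and your proof is precisely the standard double centralizer argument given in that classical literature, so there is no divergence of approach to report.
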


\begin{corollary}\label{simplesubalgebrascor} Let $A$ be a central simple algebra, where $A=M_n(D)$. Let $F \subseteq A$ be a field extension of $k$. Then $C_A(F) \cong M_t(\Delta)$ for some division algebra $\Delta$ and positive integer $t$ such that $Z(\Delta) \cong F$ and $n^2/t^2=[F:k]^2 [\Delta:F] \big/[D:k]$. Moreover, $C_A(F)=F$ if and only if $F$ is a maximal commutative subalgebra of $A$ if and only if $[F:k]=n\sqrt{[D:k]}$.

\begin{proof} By Theorem \ref{simplesubalgebras}, $C_A(F)$ is a simple subalgebra of $A$ that satisfies $F=C_A\big(C_A(F)\big)=Z\big(C_A(F)\big)$. So $C_A(F)\cong M_t(\Delta)$ for some positive integer $t$ and division algebra $\Delta$ by Wedderburn's Theorem. Then $$n^2 [D:k]=[F:k] \cdot t^2 [\Delta:k]$$ again by Theorem \ref{simplesubalgebras}. The final statement follows immediately from Theorem \ref{simplesubalgebras}.
\end{proof}
\end{corollary}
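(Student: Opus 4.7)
The plan is to apply Theorem~\ref{simplesubalgebras} (the double centralizer theorem) to $F$, viewed as a simple commutative subalgebra of the central simple algebra $A = M_n(D)$ containing the identity. This immediately yields that $C_A(F)$ is simple, that $C_A(C_A(F)) = F$, and that $[A:k] = [F:k] \cdot [C_A(F):k]$. Applying Wedderburn's theorem to $C_A(F)$ then produces a division algebra $\Delta$ and an integer $t \geq 1$ with $C_A(F) \cong M_t(\Delta)$.

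To identify $Z(\Delta)$ with $F$, I would compute $Z(C_A(F))$ in two ways. On one hand, $Z(M_t(\Delta)) \cong Z(\Delta)$. On the other hand, since $F$ is commutative we have $F \subseteq C_A(F)$, so $Z(C_A(F)) = C_A(F) \cap C_A(C_A(F)) = C_A(F) \cap F = F$. The dimension formula then falls out by substituting $[A:k] = n^2[D:k]$ and $[C_A(F):k] = t^2[\Delta:F][F:k]$ into the multiplicativity relation from Theorem~\ref{simplesubalgebras} and rearranging.

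For the final equivalences, I would argue that $F$ is a maximal commutative subalgebra of $A$ if and only if $C_A(F) = F$: any element commuting with $F$ generates together with $F$ a commutative subalgebra contained in $C_A(F)$, and conversely any commutative subalgebra containing $F$ centralizes $F$ and so lies in $C_A(F)$. The condition $C_A(F) = F$ is in turn equivalent to $[C_A(F):k] = [F:k]$, which via the dimension identity translates to $[F:k]^2 = n^2[D:k]$, i.e.\ $[F:k] = n\sqrt{[D:k]}$. No serious obstacle appears once Theorem~\ref{simplesubalgebras} is available; the argument is essentially Wedderburn bookkeeping, the only mild subtlety being the equivalence between maximality among commutative subalgebras and self-centralization.
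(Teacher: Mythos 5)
Your proposal is correct and follows essentially the same route as the paper: apply the double centralizer theorem (Theorem~\ref{simplesubalgebras}) to $F$, invoke Wedderburn's theorem to write $C_A(F)\cong M_t(\Delta)$, and then extract the dimension relation and the self-centralization characterization. The paper states the identities $F = C_A(C_A(F)) = Z(C_A(F))$ and the final equivalences more tersely, but the justifications you supply (the computation $Z(C_A(F)) = C_A(F)\cap C_A(C_A(F)) = C_A(F)\cap F = F$, and the equivalence of maximality among commutative subalgebras with $C_A(F)=F$) are exactly what is being implicitly used.
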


\noindent A subfield of a simple algebra is called \textit{strictly maximal} if it is self-centralising.

\vspace{2mm}\noindent Our main tool is the following result by Iovanov and Sistko which classifies maximal subalgebras of a simple algebra into three families.

\begin{theorem}[\textit{Lemma $3.6$ of \cite{IS}}]\label{classthm1} Let $A$ be a simple algebra, where $A=M_n(D)$. A subalgebra $B$ of $A$ is maximal if and only if it is of the following forms:

\noindent $(S1)$: $B$ is a maximal parabolic subalgebra of $A$

\noindent or $B$ is a simple subalgebra of $A$ (say $B \cong M_t(\Delta)$ for some division algebra $\Delta$, $L:=Z(A) \cong Z(D)$ and $F:=Z(B) \cong Z(\Delta)$) such that $\dim B$ divides $\dim A$ and either

\noindent $(S2)$: $F \supset L$ is a minimal field extension and $B=C_A(F)$, or

\noindent $(S3)$: $L \supset F$ is a minimal field extension, $t$ divides $n$ and $M_{n/t}(D) \cong L \otimes_F \Delta$.
\end{theorem}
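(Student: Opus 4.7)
The plan is to prove both directions of the classification: that each of (S1), (S2), (S3) yields a maximal subalgebra, and conversely that every maximal subalgebra has one of these forms. For the former, a maximal parabolic (S1) corresponds to a one-step flag $0 \subset V \subset D^n$, and a direct block-matrix calculation shows any subalgebra strictly containing it acts irreducibly on $D^n$ and hence equals $M_n(D)$. For (S2), if $B = C_A(F)$ and $B \subsetneq B' \subseteq A$, the double centralizer theorem (Theorem \ref{simplesubalgebras} and Corollary \ref{simplesubalgebrascor}) reverses the inclusion to $L \subseteq C_A(B') \subsetneq C_A(B) = F$; minimality of $F/L$ forces $C_A(B') = L$ and hence $B' = C_A(L) = A$. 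For (S3), the isomorphism $M_{n/t}(D) \cong L \otimes_F \Delta$ expresses $A$ as a scalar extension of $B$ from $F$ to $L$, so any intermediate subalgebra would correspond to an intermediate field $F \subsetneq F' \subsetneq L$, contradicting minimality.

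For the converse, given a maximal $B \subseteq A$, I would split on whether $J(B) = 0$. If $J(B) \neq 0$, then since $A$ acts faithfully on its natural module $D^n$ and $J(B)$ is a nonzero nilpotent ideal of $B$, the subspace $V := J(B) \cdot D^n$ is a proper nonzero $D$-submodule of $D^n$ (nonzero by faithfulness, proper because iterating would force $V = J(B)^m \cdot D^n = 0$ for large $m$). It is $B$-invariant since $J(B)$ is a two-sided ideal of $B$. Hence $B$ is contained in the proper parabolic subalgebra stabilising $V$, and maximality forces equality with a maximal parabolic, yielding (S1).

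If $J(B) = 0$, so $B$ is semisimple, a Peirce-decomposition argument reduces to $B$ simple. After verifying that $1_B = 1_A$ (else adjoining $1_A - 1_B$ would properly enlarge $B$), suppose $B = B_1 \times B_2$ with orthogonal central idempotents $e_1, e_2$. Then $B + e_1 A e_2$ is a subalgebra (using $e_1 A e_2 \cdot e_1 A e_2 = 0$ and $B \cdot e_1 A e_2 \subseteq e_1 A e_2$) strictly larger than $B$ (since simplicity of $A$ forces $e_1 A e_2 \neq 0$) but missing $e_2 A e_1$, contradicting maximality. So $B \cong M_t(\Delta)$ is simple; set $F := Z(B)$. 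I would then consider the subalgebra $B \cdot L$ generated by $B$ and the centre $L = Z(A)$, which is simple and contains $B$. By maximality, either $B \cdot L = B$, whence $L \subseteq F$, and running the double centralizer argument in reverse shows $B = C_A(F)$ with $F/L$ necessarily a minimal field extension (any intermediate $F'$ would yield a proper $C_A(F') \supsetneq B$), giving (S2); or $B \cdot L = A$, giving (S3), where combining Wedderburn applied to $L \otimes_F \Delta$ with the natural identification $A \cong B \otimes_F L$ and dimension counting produces $M_{n/t}(D) \cong L \otimes_F \Delta$ with $t \mid n$. The divisibility $\dim B \mid \dim A$ follows in either case from Theorem \ref{simplesubalgebras}.

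The main obstacle will be the semisimple case: specifically, (a) the Peirce-decomposition reduction to $B$ simple must carefully handle both orthogonal idempotents and the possibility $1_B \neq 1_A$, and (b) extracting the explicit tensor-product identification in (S3) requires combining Wedderburn, centre-computation and dimension counting so as to match the abstract $B \otimes_F L \cong A$ with a concrete block-diagonal embedding into $M_n(D)$. Minimality of the relevant field extension in both (S2) and (S3) is exactly the feature that prevents any further enlargement of $B$ within $A$, and hence is the essential input enforcing maximality in both directions.
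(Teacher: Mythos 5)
The paper does not prove this result: it is quoted verbatim from Iovanov--Sistko as Lemma $3.6$ of \cite{IS} and used throughout as a black box, so there is no in-paper argument to compare against and your sketch must be judged on its own merits. Your overall strategy is the natural one and lines up with how such classifications go: split on whether $J(B)=0$; when $J(B)\neq 0$ use the proper nonzero $B$-invariant subspace $J(B)\cdot D^n$ to land in a parabolic; when $J(B)=0$ perform a Peirce reduction to the case $B$ simple, then dichotomize on $BL$ versus $A$. The $J(B)\neq 0$ analysis and the Peirce reduction to $B$ simple are correct.

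There is, however, a genuine gap in the forward direction of $(S2)$. You apply $B'=C_A\big(C_A(B')\big)$ to an arbitrary subalgebra $B'$ with $B=C_A(F)\subsetneq B'\subseteq A$, but Theorem \ref{simplesubalgebras} gives this only when $B'$ is simple, which you have not established. The missing step is to observe that $D^n$ is irreducible as a $B$-module: any $B$-equivariant idempotent projection of $D^n$ lies in $C_A\big(C_A(F)\big)=F$, which is a field with no nontrivial idempotents. Hence any $B'\supseteq B$ also acts irreducibly, so $J(B')=0$ and $B'$ is semisimple with a faithful simple module, hence simple; only then does the double centralizer argument close, ruling out $C_A(B')=L$ via $B'=C_A(L)=A$. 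A related subtlety occurs in $(S3)$: there $L=Z(A)\not\subseteq B$, so Theorem \ref{simplesubalgebras}, which requires a unital subalgebra of the central simple $L$-algebra $A$ containing $L$, does not apply to $B$ at all; the correct device is the linearly disjoint decomposition $A\cong B\otimes_F L$ combined with Wedderburn applied to $L\otimes_F\Delta$, which you gesture at but do not carry out. Finally, in $(S1)$ the phrase ``acts irreducibly $\ldots$ and hence equals $M_n(D)$'' invokes a Burnside-type implication that fails over non-algebraically-closed $D$; the block-matrix computation you mention (extract the lower-left block of any $X\in B'\setminus P$ via the diagonal idempotents of $P$, then generate all of it using the block-diagonal part of $P$) is what actually does the work, and you should not route it through irreducibility.
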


\noindent If $k$ is algebraically closed then it follows from Theorem \ref{classthm1} that there are no non-trivial proper irreducible subalgebras of $M_n(k)$. This is a well-known theorem of Burnside's.

\begin{corollary}\label{classthm1cor} Let $A$ be a simple algebra and let $B$ be a maximal subalgebra of $A$. If $B$ is of type $(S2)$ (resp. $(S3)$) then $\dim A=m\dim B$ where $m=[Z(B):Z(A)]$ (resp. $m=[Z(A):Z(B)]$).
\begin{proof} Denote $A=M_n(D)$, $B = M_t(\Delta)$, $L:=Z(A)$ and $F:=Z(B)$.

\vspace{2mm}\noindent If $B$ is of type $(S2)$ then $$[A:k]=[A:L][L:k]=[F:L][C_A(F):L][L:k]=[F:L] [B:k]$$ by Theorems \ref{simplesubalgebras} and \ref{classthm1}.

\vspace{2mm}\noindent Now let $B$ be of type $(S3)$. Then $M_{n/t}(D) \cong L \otimes_F \Delta$ by Theorem \ref{classthm1} and so $$[A:F]= n^2[D:F]=[L:F]t^2[\Delta:F]=[L:F][B:F].$$ Hence $[A:k]=[A:F][F:k]=[L:F][B:k]$.
\end{proof}
\end{corollary}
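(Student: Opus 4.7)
The plan is to handle the two types of maximal subalgebras separately, in both cases reducing to a dimension identity that follows from Theorems~\ref{simplesubalgebras} and~\ref{classthm1}. Throughout, I use the notation $A=M_n(D)$, $B=M_t(\Delta)$, $L:=Z(A)\cong Z(D)$ and $F:=Z(B)\cong Z(\Delta)$, so that the quantity $m$ is either $[F:L]$ (case $(S2)$) or $[L:F]$ (case $(S3)$).

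For case $(S2)$, the hypothesis is that $B=C_A(F)$ for some minimal field extension $F\supset L$ with $F\subseteq A$. My strategy is to regard $A$ as a central simple $L$-algebra and view $F$ as a simple subalgebra of $A$ that contains the identity. Theorem~\ref{simplesubalgebras} then gives the double-centralizer dimension formula $[A:L]=[F:L]\cdot[C_A(F):L]=[F:L]\cdot[B:L]$. Multiplying through by $[L:k]$ converts this into $[A:k]=[F:L]\cdot[B:k]$, which is exactly $\dim A=m\dim B$ since $m=[F:L]$ in this case.

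For case $(S3)$, the classification provides the isomorphism $M_{n/t}(D)\cong L\otimes_F\Delta$ and the divisibility $t\mid n$. My approach is to compute $F$-dimensions on both sides, yielding $(n/t)^2[D:F]=[L:F]\cdot[\Delta:F]$. Multiplying by $t^2[F:k]$ and using the tower law $[D:F][F:k]=[D:k]$ (and similarly for $\Delta$) collapses this to $n^2[D:k]=[L:F]\cdot t^2[\Delta:k]$, i.e.\ $[A:k]=[L:F]\cdot[B:k]$, which is the desired equality as $m=[L:F]$ here.

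I do not anticipate any serious obstacle: the argument is essentially bookkeeping with the centralizer dimension formula of Theorem~\ref{simplesubalgebras} and the tensor-product identification supplied by Theorem~\ref{classthm1}. The mild subtlety is keeping careful track of the base field over which dimensions are taken, since in case $(S3)$ the natural working field is $F$ (over which both $L\otimes_F\Delta$ and $M_{n/t}(D)$ live), and one must pass back to $k$ via a tower-law computation to obtain the stated identity in terms of $[\,\cdot\,:k]$.
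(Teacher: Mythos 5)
Your proof is correct and follows essentially the same route as the paper: for $(S2)$ you invoke the double-centralizer dimension formula over $L$ and multiply up to $k$, and for $(S3)$ you take $F$-dimensions of $M_{n/t}(D)\cong L\otimes_F\Delta$, scale by $t^2$, and pass to $k$ via the tower law. The bookkeeping matches the paper's computation line for line.
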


\noindent We are grateful to Iovanov and Sistko for communicating to us the proof of the following lemma.

\begin{lemma}\label{tleqn} Let $A$ be a simple algebra, where $A=M_n(D)$. Let $B$ be a simple subalgebra of $A$, where $B=M_t(\Delta)$. Then $t \leq n$.
\begin{proof} For any algebra $S$, let $N(S)$ denote the largest possible nilpotency index of any nilpotent element in $S$. Let $n \in A$ be nilpotent. As in the case for fields, we can choose a basis for the left $A$-module $D^n$ such that $n$ is an upper triangular matrix. It follows that $N(A)=n$. So we have $t=N(B) \leq N(A)=n$.
\end{proof}
\end{lemma}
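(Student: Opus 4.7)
The plan is to use the maximum nilpotency index as a subalgebra-monotone invariant. For any algebra $S$, let $N(S)$ denote the largest $k \geq 1$ such that $S$ contains an element $x$ with $x^k = 0$ and $x^{k-1} \neq 0$ (and $N(S) = 1$ if $S$ has no nonzero nilpotent element). Clearly $N$ is monotone under inclusion: if $B \subseteq A$ as subalgebras, then $N(B) \leq N(A)$, since an element of $B$ that is nilpotent of index $k$ is nilpotent in $A$ of the same index. It therefore suffices to establish the identity $N(M_n(D)) = n$ for every division algebra $D$ and every positive integer $n$, since this will immediately yield $t = N(M_t(\Delta)) \leq N(M_n(D)) = n$.

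The lower bound $N(M_n(D)) \geq n$ is witnessed by the standard Jordan block $J = E_{12} + E_{23} + \cdots + E_{n-1,n}$, which satisfies $J^{n-1} = E_{1n} \neq 0$ and $J^n = 0$. For the reverse inequality $N(M_n(D)) \leq n$, I would identify $M_n(D)$ with the endomorphism ring $\mathrm{End}_D(V)$ of the free right $D$-module $V = D^n$ of rank $n$, and apply the standard kernel-filtration argument: given a nilpotent $f \in \mathrm{End}_D(V)$, the ascending chain $0 \subsetneq \ker f \subsetneq \ker f^2 \subsetneq \cdots \subsetneq \ker f^m = V$ increases strictly until it saturates at $V$ (the usual observation that if $\ker f^i = \ker f^{i+1}$ then $\ker f^j = \ker f^i$ for every $j \geq i$), and since each successive quotient is a nonzero $D$-submodule of a free $D$-module of rank $n$, its $D$-rank is at least $1$, forcing $m \leq n$. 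Hence $f^n = 0$.

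The one point I expect to require care is ensuring that the flag argument does not tacitly invoke commutativity of $D$. This is handled by the theory of modules over a (possibly noncommutative) division ring: every submodule of a finitely generated free $D$-module is again free with a well-defined rank that is additive in short exact sequences. With this in hand, the vector-space reasoning about iterates of a nilpotent endomorphism applies verbatim, and one can even choose a $D$-basis of $V$ adapted to the kernel flag in which $f$ is represented by a strictly upper triangular matrix, making $f^n = 0$ transparent. This is the crux of the proof and the only step where the division-ring hypothesis on $D$ is genuinely used.
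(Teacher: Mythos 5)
Your proposal is correct and takes essentially the same approach as the paper: both use the maximum nilpotency index $N(\cdot)$ as a subalgebra-monotone invariant and establish $N(M_n(D)) = n$ by upper-triangularizing a nilpotent endomorphism of $D^n$ over the division ring $D$. You spell out the kernel-flag and rank-additivity details that the paper's one-line reference to ``as in the case for fields'' leaves implicit, but the underlying argument is the same.
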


\noindent Henceforth (unless otherwise stated) let $A$ be any algebra. By Wedderburn's Theorem, we can write $A/J(A) = \prod_{i=1}^m M_{n_i}(D_i)$ for some positive integers $m$, $n_1$, ..., $n_m$ and division algebras $D_1$,..., $D_m$. Let $\pi:A \to A/J(A)$ be the natural projection.

\begin{theorem}[Theorems $2.5$ and $3.10$ of \cite{IS}]\label{classthm2} A subalgebra $B$ of $A$ is maximal if and only if it is of the following forms:

\noindent $(i)$: $\pi(B)=A/J(A)$, $J(B)=B \cap J(A) \subsetneq J(A)$, there exists an ideal $I \subset B \cap J(A)$ of $A$ such that $J(A)/I$ is a simple $B$-bimodule and $A/I = B/I \oplus J(A)/I$,

\noindent $(ii)$: $\pi(B)$ is $A/J(A)$-conjugate to $\Delta^2(n_i,D_i) \times \prod_{l \neq i,j} M_{n_l}(D_l)$ for some $1 \leq i \neq j \leq m$ where $n_i=n_j$, $D_i \cong D_j$ and $\Delta^2(n_i,D_i)$ is the image of the diagonal embedding $M_{n_i}(D_i) \to M_{n_i}(D_i) \times M_{n_j}(D_j)$, or

\noindent $(iii)$: $\pi(B)$ is $A/J(A)$-conjugate to $B_j \times \prod_{j \neq i} M_{n_j}(D_j)$ for some $1 \leq j \leq m$ where $B_j$ is a maximal subalgebra of $M_{n_j}(D_j)$ of type $(S1)$, $(S2)$ or $(S3)$.
\end{theorem}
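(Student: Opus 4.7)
The plan is to analyze a maximal subalgebra $B$ of $A$ through its image $\pi(B) \subseteq A/J(A)$. Since $B+J(A)$ is always a subalgebra of $A$ containing $B$, maximality of $B$ forces $B+J(A) \in \{B,A\}$; this dichotomy separates the case $B \supseteq J(A)$ (types $(ii)$ and $(iii)$, handled within the semisimple quotient) from the case $\pi(B) = A/J(A)$ (type $(i)$, handled via Wedderburn--Malcev).

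In the case $B \supseteq J(A)$, the quotient $C := B/J(A)$ is a maximal subalgebra of the semisimple algebra $\overline{A} := A/J(A) = \prod_{i=1}^{m} M_{n_i}(D_i)$, so the classification reduces to maximal subalgebras of a finite product of simple algebras. For each $j$, consider the projection $\pi_j : \overline{A} \to M_{n_j}(D_j)$. If $\pi_j(C) \subsetneq M_{n_j}(D_j)$ for some $j$, then $C$ sits inside the proper subalgebra $\prod_{l \neq j} M_{n_l}(D_l) \times \pi_j(C)$; maximality forces equality, and $\pi_j(C)$ is itself maximal in $M_{n_j}(D_j)$ of type $(S1)$, $(S2)$, or $(S3)$ by Theorem~\ref{classthm1}, yielding type $(iii)$. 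Otherwise $C$ is a proper subdirect product, and a Goursat-style argument using the simplicity of each factor shows $C$ must arise from an algebra isomorphism $M_{n_i}(D_i) \cong M_{n_j}(D_j)$ between some two of the factors (so $n_i = n_j$ and $D_i \cong D_j$) combined with the remaining factors in full; up to $\overline{A}$-conjugacy, this is the standard diagonal embedding $\Delta^2(n_i,D_i) \times \prod_{l \neq i,j} M_{n_l}(D_l)$ of type $(ii)$.

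In the case $\pi(B) = A/J(A)$, Wedderburn's Principal Theorem applied to $B$ yields $B = S_B \oplus J(B)$ with $S_B$ semisimple; since $\pi|_{S_B}$ is an isomorphism onto $A/J(A)$ and Wedderburn complements in $A$ are conjugate, after conjugation we may assume $S_B$ coincides with our chosen complement $S$ of $J(A)$ in $A$. Thus $B = S \oplus M$, where $M := B \cap J(A) = J(B) \subsetneq J(A)$ is an $S$-sub-bimodule of $J(A)$ closed under multiplication. Subalgebras of $A$ containing $B$ with full image in $A/J(A)$ correspond bijectively to $S$-sub-bimodule subalgebras $M' \supseteq M$ of $J(A)$, so maximality of $B$ is equivalent to maximality of $M$ in this lattice. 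Since $AMA$ is always an $S$-sub-bimodule subalgebra of $J(A)$ containing $M$, maximality forces $AMA \in \{M, J(A)\}$; the alternative $AMA = J(A) \neq M$ is ruled out by constructing an intermediate proper subalgebra $S \oplus M''$ with $M \subsetneq M'' \subsetneq J(A)$. Hence $I := M$ is a two-sided ideal of $A$, the decomposition $A/I = B/I \oplus J(A)/I$ is automatic from $A = B + J(A)$ and $B \cap J(A) = I$, and simplicity of $J(A)/I$ as a $B$-bimodule corresponds exactly to the maximality of $M$.

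The main obstacle is the argument in type $(i)$ that $M$ is an ideal of $A$: the reduction to $AMA \in \{M, J(A)\}$ is clean, but excluding the edge case $AMA = J(A) \neq M$ requires an element-by-element construction of an intermediate subalgebra by adjoining some $v \in J(A) \setminus M$ and taking the $S$-bimodule and multiplicative closure, then verifying this remains strictly inside $J(A)$ --- a delicate calculation that is the technical heart of the proof. The converse (that subalgebras of the three listed forms are maximal) follows by reversing each of the correspondences established above.
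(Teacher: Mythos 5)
This statement is not proved in the paper at all: it is quoted verbatim, with attribution, as a combination of Theorems~$2.5$ and $3.10$ of Iovanov--Sistko \cite{IS}. There is therefore no paper proof to compare your argument against; any assessment has to be of your sketch on its own terms.

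Taken on its own terms, your outline of the semisimple-type case ($B \supseteq J(A)$) is sound: the reduction to maximal subalgebras of $\prod_i M_{n_i}(D_i)$, the dichotomy between a proper projection (type $(iii)$ via Theorem~\ref{classthm1}) and a subdirect product, and the Goursat argument yielding diagonals (type $(ii)$) are standard and correct. Your split-type case, however, has two genuine problems. First, you invoke Wedderburn's Principal Theorem for $B$ and conjugacy of Wedderburn complements in $A$; both require $A/J(A)$ to be a separable algebra, a hypothesis that is not present in the theorem (nor in the cited Iovanov--Sistko result). Notice that the conclusion of type~$(i)$ only asserts a splitting of the \emph{quotient} $A/I$, precisely so that no Wedderburn--Malcev input on $A$ itself is needed; so the correct route must produce the ideal $I$ and the splitting of $A/I$ directly from maximality, not by first splitting $A$. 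Second, and more seriously, you acknowledge that the crucial step --- that $M := B\cap J(A)$ is an ideal of $A$ --- is left unfinished. Your proposed reduction via $AMA \in \{M,J(A)\}$ is itself shaky for non-unital $A$ (where $M \subseteq AMA$ can fail, and the paper explicitly works with not-necessarily-unital algebras), and the ``construct an intermediate $M''$'' step in the case $AMA = J(A) \ne M$ is exactly what needs to be proved, not a move one gets for free. As it stands this is the technical heart of the theorem and your sketch does not supply it; you would need to consult or reconstruct the actual argument from \cite{IS}.
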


\noindent A maximal subalgebra $B$ of $A$ that satisfies $\pi(B)=A/J(A)$ is said to be of \textit{split type}. Otherwise, $B$ is of \textit{semisimple} type. By Lemma $2.3$ of \cite{IS}, $B$ is of semisimple type if and only if $J(A) \subseteq B$. By Corollary $3.12$ of \cite{IS}, maximal subalgebras of $A$ of semisimple type are in $1-1$ correspondence with maximal subalgebras of $A/J(A)$.

\begin{lemma}\label{subalgideal} Every maximal subalgebra of $A$ is an ideal if and only if $A=k$ or $A$ is nilpotent.
\end{lemma}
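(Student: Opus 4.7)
The plan is to prove both implications via the Wedderburn structure of $A$.

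For the ``if'' direction: if $A = k$, the only proper subalgebra is $\{0\}$, an ideal. If $A$ is nilpotent with $A^n = 0$, I would show every maximal subalgebra $B$ contains $A^2$. Observe that $B + A^2$ is a subalgebra of $A$, in fact a two-sided ideal, since $A(B+A^2) \subseteq AB + A^3 \subseteq A^2$ and similarly on the right; if $A^2 \not\subseteq B$, maximality of $B$ forces $B+A^2 = A$. Squaring this yields $A^2 \subseteq B+A^3$ (using $B^2 \subseteq B$ and $BA^2, A^2B \subseteq A^3$), so $A = B+A^2 \subseteq B+A^3$, and iterating gives $A \subseteq B+A^n = B$, contradicting $B \subsetneq A$. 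Thus $A^2 \subseteq B$, and hence $AB, BA \subseteq A^2 \subseteq B$, making $B$ an ideal.

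For the ``only if'' direction, assume every maximal subalgebra of $A$ is an ideal and $A \neq k$. The goal is to derive a contradiction from the assumption $A/J(A) \neq 0$, thereby forcing $A$ to be nilpotent. Write $A/J(A) = \prod_{i=1}^m M_{n_i}(D_i)$ with $m \geq 1$, and recall that maximal subalgebras of $A$ of semisimple type are in bijection with maximal subalgebras of $A/J(A)$ (Corollary $3.12$ of \cite{IS}), with ideal-ness preserved in both directions. Hence it suffices to produce a non-ideal maximal subalgebra of $A/J(A)$ whenever $A/J(A) \not\cong k$, and to handle the case $A/J(A) \cong k$ directly.

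If $A/J(A)$ is simple $M_n(D)$ with $(n,D) \neq (1,k)$, take a maximal parabolic $P_{(1,n-1)}(D)$ (when $n \geq 2$) or a maximal subalgebra extending $k \cdot 1_D$ (when $n=1$ and $D \neq k$); as a proper nonzero subalgebra of the simple algebra $A/J(A)$, it cannot be an ideal. If $m \geq 2$ and two components $M_{n_i}(D_i) \cong M_{n_j}(D_j)$ coincide (in particular when all components equal $k$), the diagonal embedding of Theorem~\ref{classthm2}(ii) is a maximal subalgebra which is manifestly not an ideal. Otherwise $m \geq 2$ with at least one component $M_{n_j}(D_j) \not\cong k$, and a Theorem~\ref{classthm2}(iii) subalgebra $B_j \times \prod_{\ell \neq j} M_{n_\ell}(D_\ell)$ with $B_j$ a proper maximal subalgebra of $M_{n_j}(D_j)$ fails to be an ideal, since $B_j$ is not an ideal of the simple algebra $M_{n_j}(D_j)$. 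In each case, pulling back through $\pi$ produces a non-ideal maximal subalgebra of $A$.

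The remaining case is $A/J(A) \cong k$ with $J = J(A) \neq 0$ (using $A \neq k$). Wedderburn--Malcev applies (since $k$ is separable over itself), giving $A = ke \oplus J$ with $e$ an idempotent that is the identity of $A$ in the unital setting. Pick a maximal proper subalgebra $M$ of $J$, which exists because $J$ is finite-dimensional with $\{0\} \subsetneq J$ a proper subalgebra. Then $B = ke + M$ is a subalgebra (using $eM = Me = M$ when $e$ is the identity), and is maximal in $A$: any $B' \supsetneq B$ satisfies $B' \cap J \supsetneq M$, hence $B' \cap J = J$ by maximality of $M$ in $J$, forcing $B' \supseteq ke + J = A$. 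But $B$ contains the identity $e$, so if $B$ were an ideal then $A = A \cdot e \subseteq A \cdot B \subseteq B$, contradicting $B \subsetneq A$. The main obstacle is this final case, since the identity argument relies on $e = 1_A$; in the (less central) non-unital setting extra care is needed, but the bulk of the proof is the structural case analysis above via~\cite{IS}.
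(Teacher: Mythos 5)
Your proof takes the same Wedderburn-based route as the paper---push the hypothesis to quotients, analyse the semisimple factors, and treat the radical separately---but you work out the cases far more explicitly. The paper's own proof is just a few lines: it argues that the hypothesis passes to quotients, forces every simple factor $M_{n_i}(D_i)$ to be $k$ (since a simple algebra has no proper nonzero ideal), and then rules out $k^n$ for $n\ge 2$ via the diagonal, concluding only that $A/J(A)\in\{0,k\}$. It leaves both the ``if'' direction and the final step (from $A/J(A)\cong k$ to $A=k$) unaddressed. You fill in the ``if'' direction cleanly with a self-contained Frattini-style argument (essentially reproving Lemma~\ref{nilp}), and your semisimple-type case analysis via Theorem~\ref{classthm2} matches what the paper's one-liners gesture at.

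The scruple you raise about your final case is the real issue, and it is worse than ``extra care is needed'': the lemma as stated is actually false for non-unital algebras, so that case cannot be completed. Take $A=k\times N$ (direct product of algebras) with $N$ any nonzero finite-dimensional nilpotent algebra; concretely, let $N=kj$ with $j^2=0$, so $A=ke\oplus kj$ with $e^2=e$, $ej=je=j^2=0$. One checks directly that the only proper nonzero subalgebras of $A$ are of the form $ke\oplus W$ with $W$ a subalgebra of $N$ containing $N^2$, together with subalgebras of $N$ itself; hence the maximal subalgebras are $ke\oplus W$ with $W$ of codimension one in $N$, and $N$. Using $eN=Ne=0$ and $N^2\subseteq W$, every one of these is a two-sided ideal. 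Yet $A\ne k$ and $A$ is not nilpotent ($e^n=e$ for all $n$). So the implication ``all maximal subalgebras are ideals $\Rightarrow A=k$ or $A$ nilpotent'' fails here, precisely because the Wedderburn--Malcev idempotent $e$ is not a two-sided identity. Your argument $A=Ae\subseteq AB\subseteq B$ genuinely requires $e=1_A$, and in the unital setting your proof is complete; in the general non-unital setting the conclusion needs to be weakened (e.g.\ to allow $A\cong k\times N$ with $N$ nilpotent), a point the paper's proof, which stops at $A/J(A)\in\{0,k\}$, passes over silently.

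One small point in your semisimple case analysis: when you invoke Theorem~\ref{classthm2}(iii) with a maximal subalgebra $B_j$ of $M_{n_j}(D_j)$, you do need $B_j\ne 0$ for non-ideality, which holds because $M_{n_j}(D_j)\not\cong k$ guarantees a nonzero proper subalgebra; you implicitly use this but it is worth stating, since $B_j=0$ would give an ideal.
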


\begin{proof} Write $A/J(A) = \prod_i M_{n_i}(D_i)$ where $D_i$ are division algebras. The assumption on $A$ is inherited by quotients, hence every maximal subalgebra of the simple algebras $M_{n_i}(D_i)$
is an ideal, which must be $0$. This easily implies $n_i=1$ and $D_i = k$ for all $i$. Thus $A/J(A) = k^n$ for some $n \ge 0$, and this quotient satisfies the assumption only if $n=0,1$, namely $A/J(A)$ is $0$ or $k$.
\end{proof}

\begin{lemma}\label{ideallength} Let $I$ be an ideal of $A$. Then $l(A)=l(I)+l(A/I)$.
\begin{proof} Let $Q:=A/I$ and let $\rho:A \to Q$ be the natural projection. Let $I>I_1>...>0$ be an unrefinable chain of maximal length of $I$ and let $Q>Q_1>...>0$ be an unrefinable chain of maximal length of $Q$. Then $A=\rho^{-1}(Q)>\rho^{-1}(Q_1)>...>I>I_1>...>0$ is an unrefinable chain of $A$ of length $l(I)+l(Q)$.

\vspace{2mm}\noindent Conversely, let $A=A_0>A_1>...>A_t=1$ be an unrefinable chain of maximal length of $A$. Let $i \in \{0,1,...,t-1 \}$. It is not possible that both $A_i+I=A_{i+1}+I$ and $A_i \cap I = A_{i+1}\cap I$. So the derived chains $I=A_0 \cap I> A_1 \cap I >...>0$ and $Q=(A_0+I)/I>(A_1+I)/I>...>0$ have lengths $l_1$ and $l_2$ respectively with $l_1+l_2 \geq t$.
\end{proof}
\end{lemma}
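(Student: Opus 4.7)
The plan is to prove the equality by establishing the two inequalities separately, in the spirit of a Jordan--H\"older-type argument for the lattice of subalgebras. Let $\rho : A \to A/I$ denote the canonical projection.

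For the lower bound $l(A) \geq l(I) + l(A/I)$, I would fix unrefinable chains of maximal length $I = I_0 > I_1 > \cdots > I_s = 0$ in $I$ and $A/I = Q_0 > Q_1 > \cdots > Q_r = 0$ in $A/I$, with $s = l(I)$ and $r = l(A/I)$. Pulling the second chain back through $\rho$ and concatenating with the first yields
\[
A = \rho^{-1}(Q_0) > \rho^{-1}(Q_1) > \cdots > \rho^{-1}(Q_r) = I = I_0 > I_1 > \cdots > I_s = 0.
\]
Each step $\rho^{-1}(Q_j) > \rho^{-1}(Q_{j+1})$ is maximal by the correspondence between subalgebras of $A$ containing $I$ and subalgebras of $A/I$, and the lower portion is maximal by hypothesis, so this chain is unrefinable of length $r + s$.

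For the upper bound, I would start with an unrefinable chain $A = A_0 > A_1 > \cdots > A_t = 0$ of maximal length and form the two induced sequences $B_i := A_i \cap I$ and $C_i := (A_i + I)/I$. The core claim is that for each $i$ at least one of these strictly decreases. If both $B_i = B_{i+1}$ and $C_i = C_{i+1}$ held, then the modular identity (valid because $A_{i+1} \subseteq A_i$ and $I$ is an ideal) would give
\[
A_i = A_i \cap (A_{i+1} + I) = A_{i+1} + (A_i \cap I) = A_{i+1} + B_{i+1} = A_{i+1},
\]
contradicting strictness. Letting $l_1, l_2$ be the lengths of $(B_i)$ and $(C_i)$ after collapsing repetitions, we therefore get $l_1 + l_2 \geq t$. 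Since any chain refines to an unrefinable chain of at least equal length, $l_1 \leq l(I)$ and $l_2 \leq l(A/I)$, hence $l(A) = t \leq l(I) + l(A/I)$.

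The only real obstacle is justifying the modular identity $A_i \cap (A_{i+1} + I) = A_{i+1} + (A_i \cap I)$ inside the subalgebra lattice; the inclusion $\supseteq$ is obvious, while for $\subseteq$ one writes $x = a + j$ with $x \in A_i$, $a \in A_{i+1}$, $j \in I$ and notes $j = x - a \in A_i$, so $j \in A_i \cap I$. A minor second point is that the derived chains need not themselves be unrefinable; this is harmless because any chain extends to an unrefinable one of at least the same length, and $l(I)$, $l(A/I)$ bound the lengths of all unrefinable chains by definition.
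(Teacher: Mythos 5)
Your proposal is correct and follows essentially the same two-inequality argument as the paper: concatenating pulled-back chains for the lower bound, and using the fact that at step $i$ at least one of $A_i\cap I$ and $(A_i+I)/I$ must strictly drop for the upper bound. You fill in details the paper leaves implicit, namely the modular-law computation showing why both projections cannot stabilize simultaneously, and the remark that the derived chains, though possibly refinable, still have lengths bounded by $l(I)$ and $l(A/I)$.
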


\noindent Recall that the Frattini subalgebra $F(A)$ of an algebra $A$ is the intersection
of all maximal subalgebras of $A$. It is easy to see that, for a subalgebra $B \le A$, if $B + F(A) = A$
then $B = A$. We need the following result of Towers, see Theorem 6 of \cite{To}.

\begin{lemma}\label{nilp} Let $A$ be a nilpotent algebra. Then $F(A)=A^2$.
\end{lemma}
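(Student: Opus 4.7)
The plan is to prove the two inclusions $A^2 \subseteq F(A)$ and $F(A) \subseteq A^2$ separately, using only the fact that $A^n = 0$ for some $n$ and elementary facts about the quotient $A/A^2$.

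For the inclusion $A^2 \subseteq F(A)$, I would fix a maximal subalgebra $M$ of $A$ and show $A^2 \subseteq M$. Note first that $A^2$ is a two-sided ideal of $A$ by associativity, so $M + A^2$ is a subalgebra of $A$. If $A^2 \not\subseteq M$, then maximality forces $M + A^2 = A$. I would then prove by induction on $k \geq 2$ that $M + A^k = A$. The base case is given; for the inductive step, assuming $A = M + A^k$, the computation
\[
A^2 = (M + A^k)(M + A^k) = M^2 + M A^k + A^k M + A^{2k} \subseteq M + A^{k+1},
\]
(using $M^2 \subseteq M$ and $2k \geq k+1$ for $k \geq 1$) gives $A = M + A^2 \subseteq M + A^{k+1}$. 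Taking $k$ large enough that $A^k = 0$ then yields $M = A$, contradicting that $M$ is a proper subalgebra.

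For the reverse inclusion $F(A) \subseteq A^2$, I would take any $x \in A \setminus A^2$ and exhibit a maximal subalgebra avoiding $x$. The key observation is that in the quotient $V := A/A^2$ the multiplication is identically zero, so every $k$-vector subspace of $V$ is automatically a subalgebra. Since the image $\bar{x} \in V$ is nonzero, I can choose a codimension-one $k$-subspace $H$ of $V$ with $\bar{x} \notin H$. Let $M \subseteq A$ be the preimage of $H$ under the projection. Then $M$ has codimension $1$ in $A$ and contains $A^2$, so $M \cdot M \subseteq A \cdot A = A^2 \subseteq M$ shows $M$ is a subalgebra; being of codimension $1$ it is automatically a maximal subalgebra. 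By construction $x \notin M$, so $x \notin F(A)$.

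The only step requiring real input is the inductive computation $A^2 \subseteq M + A^{k+1}$ in the first inclusion; everything else reduces to the structure of $A/A^2$ as a trivial algebra. Nilpotency enters only to terminate the induction. I do not expect any genuine obstacle beyond getting that one identity right.
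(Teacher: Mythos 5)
Your proof is correct, but it is worth noting that the paper does not actually prove this lemma: it simply cites it as Theorem 6 of Towers (\cite{To}, which concerns generators of nilpotent not-necessarily-associative algebras). Your argument is a clean self-contained replacement. Both inclusions check out: in the direction $A^2 \subseteq F(A)$, the induction $A = M + A^k \Rightarrow A = M + A^{k+1}$ goes through because $MA^k, A^kM \subseteq A^{k+1}$ and $A^{2k} \subseteq A^{k+1}$ for $k \ge 1$, and then $A = M + A^2 \subseteq M + A^{k+1}$; taking $k$ with $A^k = 0$ forces $M = A$, the desired contradiction. In the direction $F(A) \subseteq A^2$ you use that $A^2 \subsetneq A$ (which holds for $A \ne 0$ nilpotent, since $A^2 = A$ would give $A^n = A$ for all $n$), that every subspace of $A/A^2$ is a subalgebra because multiplication there is zero, and that a codimension-one subspace containing $A^2$ is automatically a (maximal) subalgebra. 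The one thing you might make explicit is the harmless degenerate case $A = 0$, where both sides are $0$ by convention. Your approach has the advantage of being elementary and of showing exactly where nilpotency and the ideal structure of $A^2$ enter; the paper's approach has the advantage of brevity and of resting on a result already stated in the generality of non-associative algebras.
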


\noindent We can now obtain the following result.

\begin{lemma}\label{nilpotent} Let $A$ be a nilpotent algebra. Then $l(A)=\lambda(A)=\dim A$.
\begin{proof} It suffices to show that any maximal subalgebra of $A$ has codimension $1$.
Let $B < A$ be a maximal subalgebra. Then, by Lemma \ref{nilp} and the remark preceding it
we have $B+A^2 = B + F(A) < A$. It follows by the maximality of $B$ that $B+A^2 = B$, namely
$A^2 \le B$. Hence $B$ is an ideal of $A$, and $xy=0$ for all $x, y \in A/B$.
Therefore every subspace between $B$ and $A$ is a subalgebra. Since $B$ is maximal we must have $\dim A/B = 1$,
as required.
\end{proof}
\end{lemma}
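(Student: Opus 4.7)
The plan is to reduce the statement to showing that every maximal subalgebra of a nilpotent algebra $A$ has codimension $1$. Indeed, if this is established then every unrefinable chain $A=A_0>A_1>\cdots>A_t=0$ must satisfy $\dim A_{i-1}/A_i = 1$ at each step, forcing $t=\dim A$ regardless of the choice of chain. Hence every unrefinable chain has length exactly $\dim A$, and so $\lambda(A)=l(A)=\dim A$.

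To prove the codimension-$1$ claim, I would take an arbitrary maximal subalgebra $B<A$ and analyze it via the Frattini subalgebra. By Lemma \ref{nilp} we have $F(A)=A^2$. The standard Frattini property (noted just before Lemma \ref{nilp}) says that if $B+F(A)=A$ then $B=A$. Since $B$ is proper, we conclude $B+A^2\neq A$, and by maximality of $B$ the only possibility is $B+A^2=B$, i.e.\ $A^2\subseteq B$.

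The consequence is that $B$ is an ideal of $A$: for any $a\in A$ and $b\in B$ we have $ab,ba\in A^2\subseteq B$. Passing to the quotient $A/B$, the inclusion $A^2\subseteq B$ shows that $A/B$ has trivial multiplication, so every $k$-subspace of $A/B$ is automatically a subalgebra. The correspondence between subalgebras of $A$ containing $B$ and subspaces of $A/B$ then implies that every $k$-subspace between $B$ and $A$ is a subalgebra of $A$. Since $B$ is maximal, there can be no proper intermediate subspace, forcing $\dim(A/B)=1$, which completes the argument.

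I do not anticipate a serious obstacle: the entire argument rests on combining Towers' identity $F(A)=A^2$ (Lemma \ref{nilp}) with the elementary Frattini property, so the only substantive input is already available. The one point that needs care is the transition from ``$A^2\subseteq B$'' to ``every intermediate subspace is a subalgebra,'' which uses that nilpotent-multiplication on $A/B$ is actually the zero multiplication (a consequence of $A^2\subseteq B$ rather than merely $A^{n}\subseteq B$ for some larger $n$).
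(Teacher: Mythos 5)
Your proof is correct and follows essentially the same route as the paper's: reduce to the codimension-one claim, invoke Towers' identity $F(A)=A^2$ together with the Frattini property to get $A^2\subseteq B$, and then observe that $A/B$ has zero multiplication so every intermediate subspace is a subalgebra. The only difference is that you spell out a few of the elementary intermediate steps more explicitly.
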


\begin{corollary}[\textit{additivity of length}]\label{reduction} $l(A)=\dim J(A)+\sum_{i=1}^m l\big(M_{n_i}(D_i)\big)$.
\begin{proof} By Lemmas \ref{ideallength} and \ref{nilpotent}, $l(A)=l\big(J(A)\big)+l(A/J(A))=\dim J(A)+l(A/J(A))$ since $J(A)$ is a nilpotent ideal of $A$. The result then follows from Lemma \ref{ideallength} since $M_{n_i}(D_i)$ is an ideal of $A/J(A)$ for each $i$.
\end{proof}
\end{corollary}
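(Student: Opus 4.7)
The plan is to decompose $l(A)$ into a contribution from the Jacobson radical and a contribution from the semisimple quotient, handling the latter by induction on the number of Wedderburn factors. First I would apply Lemma~\ref{ideallength} to the ideal $J(A) \trianglelefteq A$, obtaining
$$l(A) = l(J(A)) + l(A/J(A)).$$
Because $J(A)$ is a nilpotent ideal, Lemma~\ref{nilpotent} immediately identifies $l(J(A))$ with $\dim J(A)$, giving the first summand of the desired formula for free.

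It then remains to prove that $l(A/J(A)) = \sum_{i=1}^{m} l(M_{n_i}(D_i))$. Using the Wedderburn decomposition $A/J(A) \cong \prod_{i=1}^{m} M_{n_i}(D_i)$, I would observe that each factor $M_{n_i}(D_i)$ is a two-sided ideal of this direct product (cut out by the corresponding primitive central idempotent). I would then argue by induction on $m$: for $m \geq 2$, treat $M_{n_1}(D_1)$ as an ideal inside $A/J(A)$ and apply Lemma~\ref{ideallength} again to obtain
$$l\Big(\prod_{i=1}^{m} M_{n_i}(D_i)\Big) = l\big(M_{n_1}(D_1)\big) + l\Big(\prod_{i=2}^{m} M_{n_i}(D_i)\Big),$$
then apply the inductive hypothesis to the second term. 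Combining this with the previous paragraph yields the claimed identity.

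There is no real obstacle here: the whole argument is a two-step reduction, and both steps are furnished by earlier lemmas. The only thing one must be slightly careful about is invoking Lemma~\ref{ideallength} in a setting where the ideal is not assumed to be the Jacobson radical (it is, in the inductive step, a semisimple summand). Since Lemma~\ref{ideallength} is stated for an arbitrary ideal, this causes no difficulty, and the corollary follows.
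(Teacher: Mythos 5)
Your proposal is correct and follows essentially the same route as the paper: apply Lemma~\ref{ideallength} to the ideal $J(A)$, use Lemma~\ref{nilpotent} to identify $l(J(A))$ with $\dim J(A)$, and then peel off the Wedderburn factors one at a time via Lemma~\ref{ideallength}. You simply make explicit the induction on $m$ that the paper leaves implicit.
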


\noindent For any finite group $G$, let $l(G)$ (resp. $\lambda(G)$) denote the maximal (resp. minimal) length of an unrefinable chain of subgroups of $G$.

\begin{lemma}\label{multiplicativity} Let $D$ be a finite dimensional division algebra. Then $l(D) \leq \Omega\big([D:k]\big)+1$. If $D$ is Galois over $k$, with associated Galois group $\Gamma:=\Gal(D/k)$, then $l(D) = l(\Gamma)+1$ and $\lambda(D) = \lambda(\Gamma)+1$. If $D$ is Galois over $k$ and $\Gamma$ is solvable then $l(D) = \Omega\big([D:k]\big)+1$.
\begin{proof} We first note that, since $\dim D < \infty$ as a $k$-algebra, every subalgebra $L \ne 0$ of $D$ is a division algebra.
Indeed, any $a \in L$ is algebraic over $D$, so if, in addition, $a \ne 0$, then $a^{-1} \in D$ can be expressed
as a polynomial in $a$ with coefficients in $k$, hence $a^{-1} \in L$.

\vspace{2mm}\noindent Let $$D =D_0> D_1 > ... >D_t=k>D_{t+1}=0$$ be an unrefinable chain of $D$. Let $i \leq t$ be a positive integer. By Theorem \ref{classthm1}, $D_i$ is a maximal subalgebra of $D_{i-1}$ of type $(S2)$ or $(S3)$ and so $[D_i:k]$ divides $[D_{i-1}:k]$. Then $t \leq \Omega\big([D:k]\big)$ by induction on $i$.

\vspace{2mm}\noindent Henceforth let $D$ be Galois over $k$ and let $\Gamma:=\Gal(D/k)$. There is a $1-1$ correspondence between subgroups of $\Gamma$ and subalgebras of $D$ that contain $k$ (the fundamental theorem of Galois theory of division algebras, Theorem $8$ of \cite{W}). That is, $D =D_0> D_1 > ... >D_t=k>D_{t+1}=0$ gives rise to an unrefinable chain
$\Gamma=\Gamma_0 > \Gamma_1 > \ldots > \Gamma_t=1$ in $G$, where $\Gamma_i = \Gal(D/D_{t-i})$ ($i = 0, \ldots , t)$. Conversely, any unrefinable chain $\Gamma=\Gamma'_0 > \Gamma'_1 > \ldots > \Gamma'_t=1$ in $\Gamma$ gives rise to
an unrefinable chain of subalgebras $D = D'_0 > D'_1 > \ldots > D'_t = k > D'_{t+1} = 1$ where, for $i=0, \ldots , t$,
$D'_i = D^{\Gamma'_{t-i}}$, the subfield of common fixed points of $\Gamma'_{t-i}$. Hence $l(D) = l(\Gamma)+1$ and $\lambda(D) = \lambda(\Gamma)+1$.

\vspace{2mm}\noindent If $\Gamma$ is solvable then $l(\Gamma)=\Omega(|\Gamma|)$ (this is true of all solvable groups). This proves the final assertion since $|\Gamma|=[D:k]$.
\end{proof}
\end{lemma}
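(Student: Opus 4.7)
The plan is to handle the three assertions in order, leveraging Theorem \ref{classthm1} for the general bound and citing the Galois correspondence from \cite{W} for the Galois case.

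First I would establish a small but crucial structural point: any nonzero $k$-subalgebra $L$ of $D$ is itself a division algebra. Since $\dim_k D < \infty$, every $a \in L$ is algebraic over $k$, and if $a \ne 0$ then the minimal polynomial of $a$ has a nonzero constant term (as $D$ has no zero divisors), which lets us express $a^{-1}$ as a polynomial in $a$ with coefficients in $k$, hence $a^{-1} \in L$. In particular every nonzero $L$ contains $1$ and therefore contains $k$.

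For the first inequality, take an unrefinable chain
\[
D = D_0 > D_1 > \cdots > D_t = 0.
\]
Each $D_i$ with $i < t$ is a division algebra, so applying Theorem \ref{classthm1} to the simple algebra $D_{i-1}$ and its maximal subalgebra $D_i$, the only options are $(S2)$ or $(S3)$: a division algebra has no proper parabolic subalgebra. In both cases $[D_i:k]$ is a \emph{proper} divisor of $[D_{i-1}:k]$. The minimal nonzero subalgebra reached must then be $k$, so $D_{t-1} = k$ and the divisibility argument (iterated) yields $t-1 \le \Omega([D:k])$, i.e.\ $l(D) \le \Omega([D:k]) + 1$.

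For the Galois case I would invoke the fundamental theorem of Galois theory for division algebras (Theorem 8 of \cite{W}), which gives an inclusion-reversing bijection between subgroups of $\Gamma$ and division subalgebras of $D$ containing $k$. Unrefinable chains $D = D_0 > D_1 > \cdots > D_{t-1} = k$ then correspond to unrefinable chains $1 = \Gamma_0 < \Gamma_1 < \cdots < \Gamma_{t-1} = \Gamma$ via $\Gamma_i = \Gal(D/D_{t-1-i})$. Tacking on the single additional step $k > 0$ at the bottom, which is always unrefinable and forced, contributes exactly $1$ to both the maximum and the minimum chain length, yielding $l(D) = l(\Gamma) + 1$ and $\lambda(D) = \lambda(\Gamma) + 1$.

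Finally, when $\Gamma$ is solvable it has a composition series with cyclic prime-order factors, and this series is unrefinable as a chain of subgroups, giving $l(\Gamma) \ge \Omega(|\Gamma|)$; the reverse inequality $l(\Gamma) \le \Omega(|\Gamma|)$ is automatic (each index in any chain is at least $2$, and the product of the indices equals $|\Gamma|$). Since $|\Gamma| = [D:k]$, combining with the Galois-case formula gives $l(D) = \Omega([D:k]) + 1$. The main obstacle I anticipate is simply the correct bookkeeping at the bottom of the chain — ensuring that the last step $k > 0$ is present in every unrefinable chain and contributes exactly one unit to both $l$ and $\lambda$ — together with the invocation of the (nontrivial) Galois correspondence from \cite{W}, which is what lets the algebra problem be translated cleanly into a group-theoretic one.
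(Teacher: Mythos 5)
Your proposal is correct and follows essentially the same route as the paper's proof: establishing that nonzero subalgebras of $D$ are division algebras, bounding the chain length via Theorem \ref{classthm1} and divisibility, transferring chains through the Galois correspondence of \cite{W}, and using $l(\Gamma)=\Omega(|\Gamma|)$ for solvable $\Gamma$. The only cosmetic difference is a shift in indexing conventions for the chain and a bit more spelled-out justification of the solvable-group length formula; there is no substantive divergence.
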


\begin{lemma}\label{strictmax} Let $A=M_n(D)$ be simple and contain a strictly maximal subfield. Then $\lambda(A) \leq 2\Omega(n)+\Omega\big([D:k]\big)+1$.
\begin{proof} Let $F$ be a strictly maximal subfield of $A$. Then $[F:Z(D)]=n\sqrt{[D:Z(D)]}$ by Corollary \ref{simplesubalgebrascor}. Let $F > F_1 > ... >F_t=1$ be an unrefinable chain of $F$ such that $F_j =Z(D)$ for some $j<t$. The chain $$ A =C_A(F_j)> C_A(F_{j-1}) >...> C_A(F)=F > F_1 > ... > 0$$ is unrefinable by Theorem \ref{classthm1} and has length at most $$2\Omega\big([F:Z(D)]\big)+\Omega\big([Z(D):k]\big)+1=2\Omega(n)+\Omega\big([D:k]\big)+1$$ by Lemma \ref{multiplicativity}.
\end{proof}
\end{lemma}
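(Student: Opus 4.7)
The plan is to exploit the hypothesis that $A=M_n(D)$ contains a strictly maximal subfield $F$, so that $C_A(F)=F$ and, by Corollary \ref{simplesubalgebrascor}, $[F:Z(D)]=n\sqrt{[D:Z(D)]}$. The strategy is to produce a single unrefinable chain realising a short length, by splicing together a descending chain of centralisers from $A$ down to $F$ with an unrefinable chain of $k$-subalgebras of $F$ continuing down to $0$.

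I first choose an unrefinable chain $F=F_0>F_1>\cdots>F_t=0$ of $k$-subalgebras of $F$ such that $F_j=Z(D)$ for some intermediate index $j$. This is possible because $Z(D)$ is a $k$-subfield of $F$ and any chain of $k$-subfields between two given subfields can be refined to have prime-index steps; by Lemma \ref{multiplicativity} applied to the field $F$, the first segment has $j\le\Omega([F:Z(D)])$ steps and the remainder has at most $\Omega([Z(D):k])+1$ steps (the $+1$ accounting for the final step $k>0$).

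I then convert the top portion into centralisers in $A$ and concatenate:
\[
A=C_A(F_j)\supsetneq C_A(F_{j-1})\supsetneq\cdots\supsetneq C_A(F_0)=F\supsetneq F_1\supsetneq\cdots\supsetneq F_t=0.
\]
The centraliser chain is strictly decreasing and has $j$ maximal steps: indeed $C_A(F_i)$ is a simple subalgebra with centre $F_i$ by Theorem \ref{simplesubalgebras}, so Theorem \ref{classthm1} type $(S2)$ applied inside $C_A(F_i)$ shows that $C_A(F_{i-1})=C_{C_A(F_i)}(F_{i-1})$ is maximal in $C_A(F_i)$ whenever $F_{i-1}\supset F_i$ is a minimal field extension, which it is by construction. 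The hand-off at $C_A(F_0)=F$ glues the two pieces into a single unrefinable chain in $A$.

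Summing the two contributions, the total length is at most $\Omega([F:Z(D)])+\Omega([F:Z(D)])+\Omega([Z(D):k])+1$. Using $[F:Z(D)]=n\sqrt{[D:Z(D)]}$, so that $\Omega([F:Z(D)])=\Omega(n)+\tfrac{1}{2}\Omega([D:Z(D)])$, together with $\Omega([D:Z(D)])+\Omega([Z(D):k])=\Omega([D:k])$, the bound simplifies to $2\Omega(n)+\Omega([D:k])+1$, as required. The main obstacle is conceptual rather than computational: one must recognise that the doubling of $\Omega(n)$ comes from the fact that both the centraliser descent from $A$ to $F$ and the subfield descent from $F$ to $Z(D)$ are separately bounded by $\Omega([F:Z(D)])$, while the central part $\Omega([Z(D):k])$ is traversed only once.
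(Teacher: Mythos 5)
Your proposal is correct and follows essentially the same route as the paper: splice a centraliser chain $A=C_A(F_j)>\cdots>C_A(F)=F$ onto an unrefinable subfield chain of $F$ passing through $Z(D)$, invoke Theorem \ref{classthm1} (type $(S2)$) for unrefinability of the centraliser segment, and bound both segments via Lemma \ref{multiplicativity} using $[F:Z(D)]=n\sqrt{[D:Z(D)]}$. The only difference is that you spell out the type-$(S2)$ verification and the arithmetic simplification $\Omega([F:Z(D)])=\Omega(n)+\tfrac12\Omega([D:Z(D)])$ a bit more explicitly than the paper does.
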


\begin{lemma}\label{depthlowerbound} Let $I$ be a non-trivial ideal of $A$. Then $\lambda(A) \geq \lambda(A/I)+1$.
\begin{proof} Let $\rho:A \to A/I$ be the natural projection. We first observe that $\lambda(A) \geq \lambda(A/I)$ by taking the preimage under $\rho$ of any unrefinable chain of $A/I$. Let \begin{equation}\label{hakal} A=A_0>A_1>...>A_t=1 \end{equation} be an unrefinable chain of $A$ of minimal length $t$. Let $M_i:=(A_i+I)/I$ for each $i \in \{0,1,...,t\}$.

\vspace{2mm}\noindent Assume (for a contradiction) that $\lambda(A) = \lambda(A/I)$. Then $M_i \supsetneq M_{i+1}$ for each $i$. So $A_i+I \supsetneq A_{i+1}+I$ for each $i$. Let $j$ be maximal such that $I \subseteq A_j$. Then $A_j \supsetneq A_{j+1}+I \supsetneq A_{j+1}$, contradicting the unrefinability of $(\ref{hakal})$. So $\lambda(A) \geq \lambda(A/I)+1$.
\end{proof}
\end{lemma}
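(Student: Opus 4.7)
My approach is to project a minimum-length unrefinable chain of $A$ onto $A/I$ via the natural map $\rho$, delete any consecutive repeated terms, and show that the resulting chain is unrefinable in $A/I$ but strictly shorter than the original.

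Fix an unrefinable chain $A = A_0 > A_1 > \cdots > A_t = 0$ of length $t = \lambda(A)$ and set $M_i = (A_i+I)/I$, so that $A/I = M_0 \supseteq M_1 \supseteq \cdots \supseteq M_t = 0$. The first key step is to verify that once consecutive duplicates are removed, the resulting strictly descending chain in $A/I$ is unrefinable. If $B$ were a subalgebra of $A/I$ strictly between two consecutive distinct terms $M_{b-1} \supsetneq M_b$, its preimage $C = \rho^{-1}(B)$ would contain $I$ and satisfy $A_{b-1} + I \supsetneq C \supsetneq A_b + I$; the modular law then gives $C = (C \cap A_{b-1}) + I$, and since $A_b$ is maximal in $A_{b-1}$ the intersection $C \cap A_{b-1}$ is forced to be either $A_b$ (which gives $C = A_b + I$) or $A_{b-1}$ (which gives $C \supseteq A_{b-1} + I$), each contradicting the strict inclusions for $C$.

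The second key step is to exhibit at least one repetition in $M_0, \ldots, M_t$. Let $j$ be the largest index with $I \subseteq A_j$, which exists and satisfies $j < t$ because $I$ is non-trivial and $A_t = 0$. Since $I \not\subseteq A_{j+1}$, the subalgebra $A_{j+1} + I$ properly contains $A_{j+1}$, so by the maximality of $A_{j+1}$ in $A_j$ we must have $A_{j+1} + I = A_j$, and hence $M_j = M_{j+1}$. Combining the two steps, the strictly decreasing chain obtained by collapsing duplicates in $M_0, \ldots, M_t$ is an unrefinable chain of $A/I$ of length at most $t - 1$, yielding $\lambda(A/I) \leq \lambda(A) - 1$.

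The main obstacle is the first key step: projecting an unrefinable chain can in principle destroy maximality, and one has to argue carefully that the strictly descending chain obtained after deleting duplicates is still unrefinable in $A/I$. The modular identity $C \cap (A_{b-1} + I) = (C \cap A_{b-1}) + I$, available because every preimage under $\rho$ automatically contains $I$, is the one indispensable ingredient; after that the pigeonhole argument involving the critical index $j$ is immediate from the non-triviality of $I$.
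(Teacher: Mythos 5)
Your proof is correct and takes essentially the same approach as the paper's: project a minimal unrefinable chain of $A$ to $A/I$, locate the critical index $j$ (the largest with $I \subseteq A_j$), and use the maximality of $A_{j+1}$ in $A_j$ together with $A_{j+1}+I$ to obtain the decrement. The paper runs this by contradiction (assuming $\lambda(A)=\lambda(A/I)$, so the images are all distinct, and then finding $A_j \supsetneq A_{j+1}+I \supsetneq A_{j+1}$ contradicting unrefinability), whereas you run it directly by exhibiting the collapse $M_j = M_{j+1}$; these are two faces of the same argument. You are, however, noticeably more careful on the one point the paper leaves terse, namely that the deduplicated image chain in $A/I$ is actually unrefinable: your modular-law computation $C = (C\cap A_{b-1})+I$ for a preimage $C$ is precisely the justification the paper's phrase ``by taking the preimage under $\rho$'' gestures at but does not spell out, and without it the inequality $\lambda(A/I)\le\lambda(A)$ is not self-evident. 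So this is the same proof, with a gap in the exposition filled in.
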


\begin{corollary}\label{depthlowerboundcor} Let $A$ be an algebra, where $A/J(A) = \prod_{i=1}^m M_{n_i}(D_i)$. Then $\lambda(A) \geq \smash{\max\limits_{i=1,...,m}} \big\{\lambda\big(M_{n_i}(D_i)\big) \big\}+m-\delta$, where $\delta=1$ if $A$ is semisimple and $0$ otherwise.
\begin{proof} We first consider the case where $A$ is not semisimple. Then $J(A)$ is a non-trivial ideal of $A$. So $\lambda(A) \geq \lambda\big(A/J(A)\big)+1$ by Lemma \ref{depthlowerbound}.

\vspace{2mm}\noindent Henceforth assume that $A$ is semisimple. We show that $$\lambda(A) \geq \smash{\max\limits_{i=1,...,m}} \big\{\lambda\big(M_{n_i}(D_i)\big) \big\}+m-1$$ by induction on $m$. If $m=1$ then we are done. So let $m>1$. Let $i_0$ satisfy $\lambda\big(M_{n_{i_0}}(D_{i_0})\big) \geq \lambda\big(M_{n_i}(D_i)\big)$ for all $1 \leq i \leq m$. Take any $j \neq i_0$ and note that $M_{n_j}(D_j)$ is an ideal of $A$. Then $$\lambda(A) \geq \lambda\big(A \big/ M_{n_j}(D_j)\big)+1 \geq \smash{\max\limits_{i=1,...,m}} \big\{\lambda\big(M_{n_i}(D_i)\big) \big\}+m-1$$ by Lemma \ref{depthlowerbound} and the inductive hypothesis.
\end{proof}
\end{corollary}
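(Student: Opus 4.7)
The plan is to split on whether $A$ is semisimple. In both cases, the essential tool is Lemma \ref{depthlowerbound}, which says that if $I$ is a non-trivial ideal of an algebra $B$, then $\lambda(B) \geq \lambda(B/I) + 1$. The strategy is to use this to peel off ideals one at a time, where $J(A)$ accounts for the shift encoded by $\delta$ and each simple direct factor accounts for one unit of the $m-1$ in the semisimple bound.

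If $A$ is not semisimple, then $J(A)$ is a non-trivial ideal of $A$. Lemma \ref{depthlowerbound} yields $\lambda(A) \geq \lambda(A/J(A)) + 1$, so the claim (with $\delta = 0$, i.e.\ the bound $\max_i \lambda(M_{n_i}(D_i)) + m$) follows from the semisimple case applied to $A/J(A)$ together with this extra $+1$. Thus it suffices to treat the semisimple case with $\delta = 1$, where the target is $\lambda(A) \geq \max_i \lambda(M_{n_i}(D_i)) + m - 1$.

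For the semisimple case I would induct on $m$. The base case $m = 1$ is immediate. For $m > 1$, fix $i_0$ attaining $\max_i \lambda(M_{n_i}(D_i))$ and choose any $j \neq i_0$. Since $A = \prod_{i=1}^m M_{n_i}(D_i)$ is a direct product of simple algebras, $M_{n_j}(D_j)$ is an ideal of $A$ and the quotient $A/M_{n_j}(D_j) \cong \prod_{i \neq j} M_{n_i}(D_i)$ is again semisimple, with $m-1$ simple factors, one of which is $M_{n_{i_0}}(D_{i_0})$. Lemma \ref{depthlowerbound} then gives $\lambda(A) \geq \lambda(A/M_{n_j}(D_j)) + 1$, and the inductive hypothesis bounds the right-hand side below by $\lambda(M_{n_{i_0}}(D_{i_0})) + (m-1) - 1 + 1 = \max_i \lambda(M_{n_i}(D_i)) + m - 1$, as required.

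There is no real obstacle here: the argument is a double application of Lemma \ref{depthlowerbound}, with the only conceptual content being that each factor in a direct product of simple algebras is an ideal, and that the Jacobson radical, when nonzero, supplies exactly the one extra step that distinguishes the two values of $\delta$. Care is needed only to keep track of the bookkeeping so that the shift matches $m - \delta$ on the nose.
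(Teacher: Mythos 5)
Your proof is correct and follows essentially the same route as the paper: a two-case split on semisimplicity with the non-semisimple case reducing to the semisimple one via $J(A)$, and the semisimple case handled by induction on $m$, peeling off a factor $M_{n_j}(D_j)$ with $j\neq i_0$ using Lemma \ref{depthlowerbound}. The only difference is cosmetic: you spell out explicitly that the non-semisimple case invokes the semisimple result on $A/J(A)$, which the paper leaves implicit.
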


\section{Proof of Theorem \ref{finlen}}\label{prooffinlen}

\noindent Since $\lambda(A) \le l(A) \le \dim A$, it suffices to show
that $\lambda(A) < \infty$ implies $\dim A < \infty$. Let
\[
A = A_0 > A_1 > \ldots > A_t =0
\]
be an unrefinable chain of length $t = \lambda(A)$. We prove by induction on $i$ ($0 \le i \le t$) that
$\dim A_{t-i} < \infty$, the case $i=0$ being obvious. Suppose now that $\dim A_{t-i} < \infty$. Then
$A_{t-i}$ is a finite dimensional maximal subalgebra of $A_{t-(i+1)}$.

\vspace{2mm}\noindent The main result of \cite{LL} shows that an associative algebra with a finite dimensional maximal subalgebra
is finite dimensional. It follows that $\dim A_{t-(i+1)} < \infty$, completing the inductive step.
For $i=t$ we obtain $\dim A = \dim A_0 < \infty$, completing the proof. We are grateful to Agata Smoktunowicz for providing
the above reference.

\section{Proof of Theorem \ref{length}}\label{prooflength}

\noindent It suffices to consider the case where $A$ is simple by Corollary \ref{reduction}. So let $A=M_n(D)$ for some positive integer $n$ and division algebra $D$.

\vspace{2mm}\noindent Denote $\Xi_A := n-1+n(n-1)[D:k]/2+ nl(D)$. We will show that $l\big(M_n(D)\big)=\Xi_A$. The case $n=1$ is easy. So we assume that $n>1$.

\begin{lemma}\label{lowerbound} $l(A) \geq \Xi_A$.
\begin{proof} Recall that $T_n(D)$ (resp. $U_n(D)$) denotes the subalgebra of $A$ consisting of all upper (resp. strictly upper) triangular matrices. Observe that $J\big(T_n(D)\big)=U_n(D)$ and $T_n(D)/U_n(D) \cong D^n$. So $l\big(T_n(D)\big)=\dim U_n(D) + nl(D)=n(n-1)[D:k]/2+nl(D)$ by Corollary \ref{reduction}.

 \vspace{2mm}\noindent The chain of parabolic subalgebras $$A > P_{n-1,1}(D) > P_{n-2,1,1}(D) > ... > P_{1,1,...,1}(D)=T_n(D)$$ is unrefinable of length $n-1$. Hence $l(A) \geq \Xi_A$.
\end{proof}
\end{lemma}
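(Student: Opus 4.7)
The plan is to exhibit an explicit unrefinable chain in $A = M_n(D)$ of length exactly $\Xi_A$; the inequality $l(A) \geq \Xi_A$ then follows directly from the definition of $l(A)$ as the supremum of lengths of unrefinable chains. I would build the chain in two concatenated stages: first descend from $A$ through a sequence of parabolic subalgebras until reaching the upper triangular subalgebra $T_n(D)$, and then continue down from $T_n(D)$ to $0$ using a chain of maximum length inside $T_n(D)$.

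For the first stage I would take
$$A = M_n(D) > P_{n-1,1}(D) > P_{n-2,1,1}(D) > \cdots > P_{1,1,\ldots,1}(D) = T_n(D),$$
a chain of $n-1$ strict inclusions in which each step refines the leftmost block (the unique block of size greater than $1$ at that stage) by splitting off a block of size $1$. To see that each inclusion is maximal, I would restrict attention to the leftmost block, where the inclusion looks like $P_{(m-1,1)}(D) \subset M_m(D)$ for some $m \leq n$; this is a maximal parabolic subalgebra of $M_m(D)$, falling under type $(S1)$ of Theorem \ref{classthm1}, so the entire step is a maximal inclusion.

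For the second stage I would invoke Corollary \ref{reduction} (additivity of length). Since $J\bigl(T_n(D)\bigr) = U_n(D)$ is nilpotent and $T_n(D)/U_n(D) \cong D^n$, applying the same corollary to the semisimple quotient $D^n = \prod_{i=1}^n D$ gives
$$l\bigl(T_n(D)\bigr) \;=\; \dim U_n(D) \;+\; n\, l(D) \;=\; \frac{n(n-1)[D:k]}{2} + n\, l(D).$$
Splicing an unrefinable chain of $T_n(D)$ of this maximum length onto the parabolic chain yields an unrefinable chain of $A$ of total length
$$(n-1) + \frac{n(n-1)[D:k]}{2} + n\, l(D) \;=\; \Xi_A,$$
which is the desired lower bound.

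The only point requiring care is the maximality of each parabolic step in the first stage; this is the main potential obstacle, but as indicated it reduces locally to a maximal parabolic in a smaller matrix algebra $M_m(D)$, which is handled cleanly by case $(S1)$ of the Iovanov--Sistko classification. No further subtlety is expected.
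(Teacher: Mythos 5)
Your proof is correct and follows essentially the same route as the paper's: the same parabolic chain from $A$ down to $T_n(D)$, the same use of Corollary \ref{reduction} (applied via $J(T_n(D)) = U_n(D)$ and $T_n(D)/U_n(D) \cong D^n$) to compute $l(T_n(D))$, and the same splicing argument. The only addition is your explicit justification that each parabolic step is maximal by localizing to the leftmost block and invoking case $(S1)$ together with Theorem \ref{classthm2}; the paper leaves this implicit, but your elaboration is accurate and harmless.
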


\noindent It remains to show that $l(A) \leq \Xi_A$. We prove this using induction on $l(A)$. Let $M$ be a maximal subalgebra of $A$. By Theorem \ref{classthm1}, $M$ is of type $(S1)$, $(S2)$ or $(S3)$. We show that $l(M) < \Xi_A$ for each of these possibilities.

\vspace{2mm}\noindent \underline{Type $(S1)$}:

 \vspace{2mm}\noindent Let $M$ be a maximal parabolic subalgebra of $A$. That is, $M$ is conjugate to $P_{r,n-r}(D)$ for some positive integer $r <n$. Observe that $\dim J(M)=r(n-r)[D:k]$ and so \begin{align*}
l(M)&=\dim J(M)+l\big(M_r(D)\big)+l\big(M_{n-r}(D)\big) \\
&=r(n-r)[D:k]+n-2+(n^2+2r^2-2rn-n)[D:k]/2+ nl(D) \\
&= \Xi_A -1
\end{align*} by Corollary \ref{reduction} and the inductive hypothesis.

\vspace{2mm}\noindent \underline{Type $(S2)$ or $(S3)$}:

\vspace{2mm}\noindent Let $M$ be of type $(S2)$ or $(S3)$. In particular, $M$ is simple and $\dim M$ divides $\dim A$. We write $M \cong M_t(\Delta)$ for some integer $t$ and division algebra $\Delta$. Then $\dim A=n^2[D:k]=mt^2[\Delta:k]=m\dim M$ for some integer $m \geq 2$. Observe that $t \leq n$ by Lemma \ref{tleqn}.

\vspace{2mm}\noindent We consider several possibilities for the parameters involved. If $t=1$ then $$l(M) = l(\Delta)\leq \Omega([\Delta:k])+1 \leq \Omega(n^2[D:k]/2)+1 \leq n+[D:k] < \Xi_A $$ by Lemma \ref{multiplicativity} and since we assumed that $n \neq 1$.

\vspace{2mm}\noindent In this paragraph we assume that $n > 2$. If $t=2$ then $$l(M)=1+[\Delta:k]+2l(\Delta) \leq 3+[\Delta:k]+2\Omega([\Delta:k]) \leq 3+2[\Delta:k] \leq 3+n^2[D:k]/4<\Xi_A$$ by the inductive hypothesis and Lemma \ref{multiplicativity}. If $t=3, 4$ or $5$ then \begin{align*}
 l(M)&= t-1+t(t-1)[\Delta:k]/2+tl(\Delta) \\
 &\leq t-1+t(t+1)[\Delta:k]/2 \\
& \leq t-1+t(t+1)n^2[D:k]/(4t^2) \\
&< 4 + n^2[D:k]/3 \\
&\leq \Xi_A
\end{align*} by the inductive hypothesis. If $t > 5$ then \begin{align*}
l(M)&=t-1+t(t-1)[\Delta:k]/2+tl(\Delta) \\
& \leq t\big(\Omega([\Delta:k])+2\big)+t(t-1)[\Delta:k]/2 -1\\
& \leq 3t[\Delta:k]/2+t(t-1)[\Delta:k]/2 -1 \\
& = t(t+2)[\Delta:k]/2-1 \\
& \leq 2t^2[\Delta:k]/3-1 \\
& \leq n^2[D:k]/3-1 \\
& < \Xi_A
\end{align*} by the inductive hypothesis, Lemma \ref{multiplicativity} and since $[\Delta:k] \neq 1$.

\vspace{2mm}\noindent It remains to consider the case where $n =2$ and $t=2$ (since $t \leq n$). We have $$ l(M)=1+[\Delta:k]+2l(\Delta) \leq 3+[\Delta:k]+2\Omega([\Delta:k]) \leq 3+2[\Delta:k] \leq 3+[D:k] < \Xi_A$$ by the inductive hypothesis, Lemma \ref{multiplicativity} and since $D \neq k$ (as otherwise $[\Delta:k]=1/m$, a contradiction).

\vspace{2mm}\noindent This completes the proof of Theorem \ref{length}.




\section{Proof of Corollary \ref{lengthcor0}}\label{prooflengthcor0}

\noindent Write $A/J(A) = \prod_{i=1}^m M_{n_i}(D_i)$. By definition, $B$ is conjugate to the subalgebra $\prod_{i=1}^m T_{n_i}(D_i)$ of $A/J(A)$. So $J(B) \cong \prod_{i=1}^m U_{n_i}(D_i)$ and $B/J(B) \cong \prod_{i=1}^m (D_i)^{n_i}$. Then $$l(B)=\dim J(B)+\sum_{i=1}^m n_il(D_i)=\sum_{i=1}^m n_i(n_i-1)[D_i:k]/2+ n_il(D_i)$$ by Theorem \ref{length} and so $l(A)=\dim J(A)+l(B)+ r$.

\vspace{2mm}\noindent If $A$ is split then $l(B)=\sum_{i=1}^m n_i(n_i+1)/2=\dim B$.

\section{Proof of Corollary \ref{lengthcor}}\label{prooflengthcor}

\noindent Let $A=M_n(D)$ for some division algebra $D$ that is Galois over $k$. Observe that $$l(A)-(\dim A)/2 = n\big(1+l(D)-[D:k]/2\big)-1$$ by Theorem \ref{length}.

\vspace{2mm}\noindent Let $[D:k] \leq 6$ or $[D:k]=8$ (except for the case where $[D:k]=5$ and $n=1$). Then $\Gal(D/k)$ is solvable (as all finite groups of order less than $60$ are solvable). Hence $l(D) = \Omega\big([D:k]\big)+1$ by Lemma \ref{multiplicativity}. Then we have $$l(A)-(\dim A)/2 = n\big(2+\Omega([D:k])-[D:k]/2\big)-1 \geq 0.$$

\noindent We now prove the converse. If $[D:k] \geq 9$ or $[D:k]=7$ then $$1+l(D)-[D:k]/2 \leq 2+ \Omega([D:k])-[D:k]/2<0$$ using Lemma \ref{multiplicativity} and so $l(A)-(\dim A)/2 <0$. Finally, if $[D:k] = 5$ and $n=1$ then $l(A)-(\dim A)/2=-1/2<0$.

\section{Proof of Theorem \ref{depth}}\label{proofdepth}

\noindent Let $A=M_n(D)$. Let $D=D_0>D_1>...>k>1$ be an unrefinable chain of minimal length of division algebras. The chain $$ A > M_n(D_1)>...>M_n(k)$$ is unrefinable of length $\lambda(D)-1$. So $\lambda(A) \leq \lambda(D)-1 + \lambda\big(M_n(k)\big)$.

\vspace{2mm}\noindent Let $a$ be the unique integer satisfying $2^a \leq n <2^{a+1}$. Note that $a \leq \log_2 n$. We show that $\lambda\big(M_n(k)\big) \leq 6a+1$ by induction.

\vspace{2mm}\noindent If $a=0$ then $n=1$ and $\lambda\big(M_n(k)\big)=1$. If $n$ is even then the chain $$M_n(k) > P_{n/2,n/2}(k)>\big(M_{n/2}(k)\big)^2>M_{n/2}(k)$$ is unrefinable of length $3$. If $n$ is odd then the chain $M_n(k) >P_{n-1,1}(k)>M_{n-1}(k) \times k >... $ $$...\hspace{0.8mm}M_{n-1}(k)> P_{(n-1)/2,(n-1)/2}(k)>\big(M_{(n-1)/2}(k)\big)^2>M_{(n-1)/2}(k)$$ is unrefinable of length $6$. Observe that $2^{a-1} \leq \lfloor n/2 \rfloor <2^{a}$. So $\lambda\big(M_{\lfloor n/2 \rfloor}(k)\big) \leq 6(a-1)+1$ by the inductive hypothesis. Hence $\lambda\big(M_n(k)\big) \leq 6 + \lambda\big(M_{\lfloor n/2 \rfloor}(k)\big) \leq 6a+1$.

\vspace{2mm}\noindent In summary, we have $\lambda(A) \leq \lambda(D)-1 + 6a+1 \leq \lambda(D) + 6\log_2 n$.

\vspace{2mm}\noindent Now we assume that $k$ is algebraically closed. So $D=k$. We show that $\lambda(A) \geq 3\log_2 n+1$ by induction on $\lambda(A)$. If $\lambda(A)=1$ then $n=1$ and we are done.

\vspace{2mm}\noindent Let $B$ be a maximal subalgebra of $A$. By Theorem \ref{classthm1}, $B$ is conjugate to $P_{r,n-r}(k)$ for some integer $r$ satisfying $n/2 \leq r < n$. Rearranging, we have $\log_2 r \geq \log_2 n-1$. Then $$\lambda(B) \geq 2+\lambda\big(M_r(k)\big) \geq 2+3\log_2 r+1 \geq 3\log_2 n$$ by Corollary \ref{depthlowerboundcor} and the inductive hypothesis. This proves the second assertion of the theorem.

\vspace{2mm}\noindent Finally, we assume that $[\overline{k}:k]=2$. We show that $\lambda(A) \geq \log_2(\dim A)+1$ by induction on $\lambda(A)$. If $\lambda(A)=1$ then $A \cong k$ and certainly this inequality holds.

\vspace{2mm}\noindent Let $B$ be a maximal subalgebra of $A$. By Theorem \ref{classthm1}, $B$ is of type $(S1)$, $(S2)$ or $(S3)$. We show that $\lambda(B) \geq \log_2(\dim A)$ for each of these three possibilities.

\vspace{2mm}\noindent Let $B$ be of type $(S1)$. That is, $B$ is conjugate to $P_{r,n-r}(D)$ for some integer $r$ satisfying $n/2 \leq r <n$. Note that $B/J(B) \cong M_r(D) \times M_{n-r}(D)$. Then \begin{align*}\lambda(B) &\geq \lambda\big(M_r(D)\big) +2\\
& \geq \log_2\!\big(\!\dim M_r(D)\big)+3 \\
& \geq \log_2\!\big(n^2[D:k]/4\big)+3 \\
& = \log_2(\dim A)+1 \end{align*} by Corollary \ref{depthlowerboundcor} and the inductive hypothesis.

\vspace{2mm}\noindent Let $B$ be of type $(S2)$ or $(S3)$. Then $\dim A=2\dim B$ by Corollary \ref{classthm1cor}. So we have $$\lambda(B) \geq \log_2(\dim A/2) +1 = \log_2(\dim A)$$ by the inductive hypothesis. This proves the theorem.

\section{Proof of Corollary \ref{depthcor}}\label{proofdepthcor}

\noindent Let $A=M_n(D)$. Observe that $$\cd(A) \geq n^2/2+3n/2-2 -6\log_2 n \geq n^2/2-9$$ using Theorems \ref{length} and \ref{depth}. Rearranging, we have $n \leq \sqrt{2\cd(A)+18}$.

\section{Proof of Theorem \ref{depthfinitefields}}\label{proofdepthfinitefields}


\noindent Let $k$ be a field such that, for every $r \in \N$, there exists a field extension of degree $r$ over $k$. Let $A=M_n(D)$. Let $D=D_0>D_1>...>k>0$ be an unrefinable chain of minimal length of division algebras. The chain $$ A > M_n(D_1)>...>M_n(k)$$ is unrefinable of length $\lambda(D)-1$. So $\lambda(A) \leq \lambda(D)-1 + \lambda\big(M_n(k)\big)$. It remains to show that $\lambda\big(M_n(k)\big) \leq \min\{2\Omega(n)+1,16\}$. Henceforth let $D=k$. That is, $A=M_n(k)$.

\vspace{2mm}\noindent Let $L$ be a field extension of $k$ of degree $n$. Consider the image of $L$ in $A$ under the left regular representation. Then $L$ is a strictly maximal subfield of $A$ by Corollary \ref{simplesubalgebrascor}. So $\lambda(A) \leq 2\Omega(n)+1$ by Lemma \ref{strictmax}. In particular, if $n$ is prime then $\lambda(A) \leq 3$ (we use this fact later). Note that $2\Omega(n)+1 \leq 16$ if $n \leq 6$.

\vspace{2mm}\noindent Now assume that $n$ is odd and $\geq 7$. Using Helfgott’s solution to the ternary Goldbach conjecture \cite{H}, we can write $n=p_1+p_2+p_3$ for some primes $p_1$, $p_2$ and $p_3$. The chain $$A > P_{p_1+p_2,p_3}(k) > M_{p_1+p_2}(k) \times M_{p_3}(k) > P_{p_1,p_2}(k) \times M_{p_3}(k)>\prod_{i=1}^3 M_{p_i}(k)$$ is unrefinable of length $4$ and so $\lambda(A) \leq 4+\sum_{i=1}^3\lambda\big(\hspace{-0.3mm}M_{p_i}(k)\big)=13$ since each $p_i$ is prime.

\vspace{2mm}\noindent If $n$ is even and $\geq 8$ then the chain $$ A > P_{n-1,1}(k)>M_{n-1}(k) \times k>M_{n-1}(k)$$ is unrefinable of length $3$ and so $\lambda(A) \leq 3+\lambda\big(\hspace{-0.3mm}M_{p_i}(k)\big)\leq 16$.

\section{Proof of Theorem \ref{dimthings}}\label{proofdimthings}

\noindent Let $A$ be an algebra. We first characterise when $l(A)=\dim A$. That is, we prove part $(i)$ of Theorem \ref{dimthings} in the following lemma.

\begin{lemma}\label{length=dim} $l(A)=\dim A$ if and only if $A/J(A)$ is a direct product of copies of $k$, $M_2(k)$ and quadratic field extensions of $k$.
\begin{proof} Let $A/J(A) = k^a \times M_2(k)^b \times \prod_{i=1}^m k_i $ for non-negative integers $a,b$ and $m$ and quadratic field extensions $k_i$ of $k$. Then $l(A)=\dim J(A)+a+bl\big(M_2(k)\big)+\sum_{i=1}^m l(k_i)$ by Corollary \ref{reduction}. Observe that the chain $$M_2(k)>T_2(k)>k^2>k>0$$ is unrefinable and so $l\big(M_2(k)\big)\geq 4$. Similarly, for each $i$, the chain $k_i>k>0$ is unrefinable and so $l(k_i)\geq 2 $. Since the dimension of an algebra is an upper bound for its length, we have $l\big(M_2(k)\big)= 4$ and $l(k_i)=2$ for each $i$. So $l(A)=\dim J(A)+a+4b+2m=\dim A$.

\vspace{2mm}\noindent Conversely, assume that $l(A)=\dim A$. Let $S \cong M_r(D)$ be a simple factor of $A/J(A)$, where $r$ is a positive integer and $D$ is a division algebra. Then $l(S)=\dim(S)$ by Corollary \ref{reduction}.

\vspace{2mm}\noindent If $r=1$ then $[D:k]=l(D) \leq \Omega([D:k])+1$ by Lemma \ref{multiplicativity} and so $[D:k]=1$ or $2$. If $[D:k]=2$ then $D$ must be a field since the $Z(D)$-dimension of $D$ is the square of an integer. Henceforth assume that $r>1$. Since $l(S)=\dim(S)$, there exists a maximal subalgebra $M$ of $S$ of codimension $1$. By Theorem \ref{classthm1}, $M$ is of type $(S1)$, $(S2)$ or $(S3)$. If $M$ is of type $(S2)$ or $(S3)$ then $\dim M$ divides $\dim S$, a contradiction. If $M$ is of type $(S1)$ then $M$ is conjugate to $P_{\alpha,n-\alpha}(D)$ for some positive integer $\alpha <n$. Then $\dim M=\dim S-\alpha(n-\alpha)\big[D:k]$ and so $n=2$, $\alpha=1$ and $[D:k]=1$.
\end{proof}
\end{lemma}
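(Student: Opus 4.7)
The plan is to reduce the statement to a question about simple algebras via the additivity of length. By Corollary~\ref{reduction} we have $l(A)=\dim J(A)+\sum_{i=1}^m l(M_{n_i}(D_i))$, while trivially $\dim A=\dim J(A)+\sum_{i=1}^m \dim M_{n_i}(D_i)$. Since $l(S)\le \dim S$ for each simple factor $S=M_{n_i}(D_i)$, the equality $l(A)=\dim A$ is equivalent to the equality $l(S)=\dim S$ holding for every such $S$. Both directions of the lemma therefore reduce to classifying the simple algebras $S=M_n(D)$ that satisfy $l(S)=\dim S$, and I claim these are exactly $k$, $M_2(k)$, and the quadratic field extensions of $k$.

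For the easy direction ($\Leftarrow$), I would verify $l(S)=\dim S$ directly for the three allowed simple factors. For $S=k$ this is trivial; for $S=M_2(k)$, Corollary~\ref{matrix} gives $l(M_2(k))=3+1=4=\dim M_2(k)$; and for a quadratic field extension $F/k$, Lemma~\ref{multiplicativity} gives $l(F)\le \Omega(2)+1=2$, while the chain $F>k>0$ shows $l(F)\ge 2$. Combining with the reduction above yields $l(A)=\dim A$.

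For the hard direction ($\Rightarrow$), the key observation is that $l(S)=\dim S$ forces some unrefinable chain $S=S_0>S_1>\cdots>S_t=0$ of length $\dim S$, and since each step contributes at least one to the total codimension, every step must have codimension exactly one. In particular $S$ must possess a codimension-one maximal subalgebra $M$. Now I invoke Theorem~\ref{classthm1}. If $n=1$ then $S=D$ and the bound $l(D)\le \Omega([D:k])+1$ of Lemma~\ref{multiplicativity} together with $l(D)=\dim D=[D:k]$ forces $[D:k]\in\{1,2\}$; when $[D:k]=2$, the fact that $[D:Z(D)]$ is a perfect square forces $Z(D)=D$, so $D$ is a quadratic field extension of $k$. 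If $n\ge 2$, then in type $(S1)$ the subalgebra $M$ is conjugate to $P_{\alpha,n-\alpha}(D)$ with codimension $\alpha(n-\alpha)[D:k]$, and setting this equal to one forces $n=2$, $\alpha=1$, $[D:k]=1$, giving $S=M_2(k)$; in types $(S2)$ and $(S3)$ we have $\dim M\mid\dim S$ with index at least $2$, so the codimension is at least $\dim S/2\ge 2$, a contradiction.

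The main obstacle is the codimension-one reduction step: one must be sure that $l(S)=\dim S$ really does propagate to a codimension-one maximal subalgebra at the top of the chain, rather than being distributed unevenly among the steps. This is handled by the pigeonhole observation that if the total sum of codimensions equals $\dim S$ and each individual codimension is $\ge 1$, then each must equal $1$ exactly. Once this is in place, the classification in Theorem~\ref{classthm1} and the division-algebra bound from Lemma~\ref{multiplicativity} complete the argument with only routine arithmetic.
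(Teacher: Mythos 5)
Your proposal is correct and follows essentially the same route as the paper: reduce to simple factors via Corollary \ref{reduction}, observe that $l(S)=\dim S$ forces a codimension-one maximal subalgebra, and then use Theorem \ref{classthm1} together with the bound $l(D)\le\Omega([D:k])+1$ from Lemma \ref{multiplicativity} to pin down $S$ as $k$, $M_2(k)$, or a quadratic field extension. The only cosmetic differences are that you cite Corollary \ref{matrix} for $l(M_2(k))=4$ where the paper exhibits an explicit chain, and you spell out the pigeonhole step that the paper leaves implicit.
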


\noindent We next consider what happens when $\cd(A)=0$.

\begin{lemma}\label{simplestuff} Let $A$ be a simple algebra that is not a division algebra. Then $\cd(A)=0$ if and only if $A = M_2(k)$ and there does not exist a quadratic field extension of $k$.
\begin{proof} Let $A=M_n(D)$ for some integer $n>1$ and division algebra $D$. Let $\cd(A)=0$. Assume (for a contradiction) that $n \geq 3$. Any unrefinable chain of form $$A>...>M_3(k)>P_{2,	1}(k)>M_2(k) \times k >M_2(k)>T_2(k) >k^2>k >0$$ has length at most $l(A)-l\big(M_3(k)\big)+7=l(A)-1$ (using Theorem \ref{length}). This contradicts $\cd(A)=0$. So $n = 2$.

\vspace{2mm}\noindent Recall that $J\big(T_2(D)\big)=U_2(D)$, which has dimension $[D:k]$, and that $T_2(D)/U_2(D) \cong D^2$. So $l(A)-1 \geq l\big(T_2(D)\big)=2l(D)+[D:k]$ by Corollary \ref{reduction}. Let $D>D_1>...>k>0$ be an unrefinable chain of minimal length. Then the chain $$A > T_2(D) > D^2 > D > D_1>... ... > k >0$$ is unrefinable of length $\lambda(D)+3$. Hence $D = k$ since $\cd(A)=0$. That is, $A=M_2(k)$.

\vspace{2mm}\noindent Observe that $l(A)=4$ by Lemma \ref{length=dim}. Let $F$ be a quadratic field extension of $k$. Then $F$ embeds in $M_2(k)$ via the left regular representation. So the chain $$M_2(k) > F > k>0$$ is unrefinable of length $3$, which contradicts $\lambda(A)=l(A)$.

\vspace{2mm}\noindent Conversely, let $A=M_2(k)$ and assume that there does not exist a quadratic field extension of $k$. Once again, $l(A)=4$ by Lemma \ref{length=dim}. Let $M$ be a maximal subalgebra of $A$. Assume (for a contradiction) that $\lambda(M) < 3$. If $M$ is parabolic then $\lambda(M) \geq 3$ by Corollary \ref{depthlowerboundcor}, a contradiction. Hence, by Theorem \ref{classthm1}, $M=C_A(F)$ for some subfield $F$ of $A$. Since $\dim A=4$, we must have $[F:k]=2$, a contradiction.
\end{proof}
\end{lemma}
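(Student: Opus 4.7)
Writing $A = M_n(D)$ with $n \geq 2$ (forced since $A$ is not a division algebra), I would prove each direction by exhibiting (or ruling out) a short unrefinable chain in $A$ and comparing its length against $l(A)$ as given by Theorem \ref{length}.

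\textbf{Forward direction.} Assume $\cd(A)=0$ and proceed in three reductions. First, I would rule out $n \geq 3$: construct an unrefinable chain that peels off one block at a time via $M_m(D) > P_{m-1,1}(D) > M_{m-1}(D) \times D > M_{m-1}(D)$ (three steps per block; the middle step is maximal because $J(P_{m-1,1}(D)) \cong D^{m-1}$ is simple as a bimodule over the Levi factor), iterate down to $M_2(D)$, then append $M_2(D) > T_2(D) > D^2 > D$ followed by any minimal chain of length $\lambda(D)$ inside $D$. This yields an unrefinable chain of total length $3(n-1)+\lambda(D)$; subtracting from $l(A)=(n-1)+n(n-1)[D:k]/2+nl(D)$ gives a gap of $(n-1)(n-2)/2$ when $D=k$, and strictly larger when $D \neq k$ (since then $[D:k] \geq 2$). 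So the gap is positive for $n \geq 3$, contradicting $\cd(A)=0$. Second, with $n=2$, I would rule out $D \neq k$: the chain $A > T_2(D) > D^2 > D > \ldots > 0$ of length $3+\lambda(D)$ gives a gap $\geq [D:k]-1 \geq 1$ against $l(A)=1+[D:k]+2l(D)$. Third, with $A=M_2(k)$, I would rule out the existence of a quadratic extension $F/k$: embedding $F$ into $A$ via the left regular representation (and noting $F$ is a strictly maximal subfield by Corollary \ref{simplesubalgebrascor}) yields the unrefinable chain $A > F > k > 0$ of length $3$, against $l(A)=4$ from Lemma \ref{length=dim}.

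\textbf{Reverse direction.} Assume $A=M_2(k)$ with no quadratic extension of $k$. Lemma \ref{length=dim} gives $l(A)=4$, so it suffices to show $\lambda(A) \geq 4$, equivalently that every maximal subalgebra $M < A$ satisfies $\lambda(M) \geq 3$. By Theorem \ref{classthm1}: a type (S2) subalgebra would arise from a minimal (hence prime-degree) extension $F$ of $Z(A)=k$ with $[F:k]$ dividing $4$, forcing $[F:k]=2$, excluded by hypothesis; a type (S3) subalgebra would require a proper subfield of $Z(A)=k$ containing $k$, impossible. So $M$ must be parabolic, conjugate to $T_2(k)$, and since $T_2(k)$ is non-semisimple with $T_2(k)/J(T_2(k)) \cong k \times k$, Corollary \ref{depthlowerboundcor} gives $\lambda(T_2(k)) \geq \lambda(k)+2=3$.

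The main obstacle is the first step of the forward direction: verifying unrefinability of each link in the peel-off chain (in particular that the Levi decomposition step $P_{m-1,1}(D) > M_{m-1}(D) \times D$ really is maximal, which reduces to simplicity of the Jacobson radical as an $M_{m-1}(D)$-$D$-bimodule), and then cleanly separating the $D=k$ and $D \neq k$ cases in the arithmetic comparison so that the gap against $l(A)$ is provably positive for all $n \geq 3$.
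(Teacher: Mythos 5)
Your overall strategy mirrors the paper's (rule out $n \geq 3$, then with $n=2$ rule out $D \neq k$, then rule out quadratic extensions, then prove the converse), and your second and third reductions plus the reverse direction are sound. However, there is a genuine gap in the first reduction that you have not noticed.

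In each step of your peel-off chain
$M_m(D) > P_{m-1,1}(D) > M_{m-1}(D) \times D > M_{m-1}(D)$,
the last link $M_{m-1}(D) \times D > M_{m-1}(D)$ (that is, $> M_{m-1}(D) \times 0$) is \emph{not} unrefinable unless $D = k$. When $m \geq 3$ the two factors of $M_{m-1}(D) \times D$ are non-isomorphic, so Theorem \ref{classthm2} type $(ii)$ (the diagonal) does not apply; by type $(iii)$ any maximal subalgebra sitting over $M_{m-1}(D)\times 0$ must have the form $M_{m-1}(D) \times E$ with $E$ a maximal subalgebra of $D$, and for $D \neq k$ the nonzero subalgebra $k \subseteq D$ (or any maximal $E$) gives $M_{m-1}(D) \times 0 \subsetneq M_{m-1}(D) \times E \subsetneq M_{m-1}(D) \times D$. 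Hence the chain length $3(n-1) + \lambda(D)$ is wrong when $D \neq k$, and the arithmetic ``and strictly larger when $D \neq k$'' is resting on an unrefinability claim that fails. You flag the Levi step $P_{m-1,1}(D) > L_{m-1,1}(D)$ as the thing to check, but that step is fine (it is the $r=2$ Levi factor, type $(i)$); it is the \emph{next} step that breaks. Note also that your display $M_2(D) > T_2(D) > D^2 > D$ and your $n=2$ reduction both rely on $D^2 > D$; this is correct only if $D$ on the right means the \emph{diagonal} $\Delta^2(1,D)$ (type $(ii)$), not the projection $D \times 0$, which again is not maximal for $D \neq k$ — worth stating explicitly.

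The paper sidesteps the whole issue by routing through $M_3(k)$: it takes any unrefinable chain $A > \cdots > M_3(k)$ (which has length at most $l(A) - l(M_3(k))$), appends the explicit unrefinable chain
$M_3(k) > P_{2,1}(k) > M_2(k)\times k > M_2(k) > T_2(k) > k^2 > k > 0$
of length $7$, and observes that the total is at most $l(A) - l(M_3(k)) + 7 = l(A) - 1$ by Theorem \ref{length}. Every link below $M_3(k)$ involves only $k$, so the maximality of $M_2(k)\times 0$ inside $M_2(k)\times k$ is immediate, and no case split on $D$ is needed. Your argument can be repaired by inserting a descent through the subalgebras of $D$ at each peel (giving length $2(n-1) + n\lambda(D)$ and still a positive gap), but as written it does not establish the unrefinability it needs.
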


\noindent Recall that an algebra $A$ satisfies condition $(*)$ if $A/J(A) \cong M_2(k)^{\delta} \times \prod_{i=1}^m D_i$ for non-negative integers $\delta,m$ and division algebras $D_i$ such that
\begin{itemize}
\item $\cd(D_i)=0$ for each $1 \leq i \leq m$,
\item either $\delta=1$ and there does not exist a quadratic field extension of $k$ or $\delta=0$, and
\item any division subalgebra of $A/J(A)$ either embeds in $D_i$ for precisely one $i$ or is isomorphic to $k$.
\end{itemize}

\begin{lemma}\label{iioneway} Let $A$ be an algebra such that $\cd(A)=0$. Then $A$ satisfies condition $(*)$.
\begin{proof} Observe that $\cd\!\big(A/J(A)\big)=0$ since $\cd(A)=0$. Write $A/J(A) = \prod_{i=1}^m S_i$ by Wedderburn's Theorem, where each $S_i$ is a simple factor of $A/J(A)$. Observe that $$\lambda\big(A/J(A)\big) \leq \sum_{i=1}^m \lambda(S_i) \leq \sum_{i=1}^m l(S_i)=l\big(A/J(A)\big)$$ by Corollary \ref{reduction} and so $\cd(S_i)=0$ for each $i$. By Lemma \ref{simplestuff}, for each $i$, either $S_i$ is a division algebra or $S_i=M_2(k)$. So let us write $A/J(A) = M_2(k)^{\delta} \times \prod_{i=1}^m D_i $ for non-negative integers $m,\delta$ and division algebras $D_i$ of $k$.

\vspace{2mm}\noindent Assume (for a contradiction) that $\delta >1$. Any unrefinable chain of form $$A/J(A) > ... > M_2(k)^2 > M_2(k) > T_2(k)>k^2>k>0$$ has length at most $l\big(A/J(A)\big)-l\big(M_2(k)^2\big)+5 <l\big(A/J(A)\big)$ (using Lemma \ref{length=dim}). This contradicts $\cd\!\big(A/J(A)\big)=0$. So $\delta =0$ or $1$. If $\delta =1$ then there does not exist a quadratic field extension of $k$ by Lemma \ref{simplestuff}.

\vspace{2mm}\noindent Let $E$ be a division subalgebra of $A/J(A)$ with $[E:k]>1$. Since $E$ is simple it must embed in some simple factor of $A/J(A)$. 
 If $[E:k]=2$ then $E$ is a quadratic field extension of $k$. So if $\delta=1$ then $[E:k]>2$ and $E$ cannot embed in $M_2(k)$. Assume (for a contradiction) that $E$ embeds in both $D_i$ and $D_j$ for $1 \leq i < j \leq m$. Any unrefinable chain of form $$D_i \times D_j > ... >E^2 > E > ... >k >0$$ has length at most $l(D_i \times D_j)-l(E)+1$ (using Corollary \ref{reduction}). Since $\cd(D_i \times D_j)=0$, we have $l(E)=1$ and so $E \cong k$, a contradiction. We have shown that $E$ embeds in $D_i$ for precisely one $i$.
\end{proof}
\end{lemma}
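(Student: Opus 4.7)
My plan is to first reduce to the semisimple case, then analyze simple factors via Lemma \ref{simplestuff}, and finally address the two less obvious conditions in $(*)$: the bound $\delta \le 1$ and the uniqueness of embedding of non-$k$ division subalgebras. First I would observe that $\cd(A/J(A)) \le \cd(A) = 0$, since two unrefinable chains of $A/J(A)$ of different lengths lift (via preimages in $A$, extended by a fixed unrefinable chain of $J(A)$) to two unrefinable chains of $A$ with the same length difference. Condition $(*)$ being a condition on $A/J(A)$ alone, I may assume $A$ is semisimple, $A = \prod_i S_i$. The same lifting argument applied to the quotient $A \to S_i$ yields $\cd(S_i) = 0$, so by Lemma \ref{simplestuff} each $S_i$ is either a division algebra or isomorphic to $M_2(k)$ with $k$ admitting no quadratic extension. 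This gives $A \cong M_2(k)^\delta \times \prod_{i=1}^m D_i$ with $\cd(D_i) = 0$; if $\delta \ge 1$, applying Lemma \ref{simplestuff} to the $M_2(k)$-quotient of $A$ guarantees the absence of quadratic extensions of $k$.

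To show $\delta \le 1$, suppose for contradiction $\delta \ge 2$. I would exhibit the unrefinable chain
\[M_2(k) \times M_2(k) > \Delta > T_2(k) > k^2 > k > 0\]
of length $5$, where $\Delta \cong M_2(k)$ is the diagonal copy. Since $l(M_2(k)^2) = 8$ by Lemma \ref{length=dim}, completing this short chain with a maximum-length chain in the remaining factors of $A$ yields an unrefinable chain of $A$ of length $l(A) - 3$, contradicting $\cd(A) = 0$.

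The main obstacle is the uniqueness-of-embedding condition. Any subfield of $M_2(k)$ has $k$-dimension at most $2$, so the no-quadratic-extension hypothesis rules out embeddings of any division subalgebra $E \not\cong k$ into the $M_2(k)$-factor. Suppose for contradiction that $E$ embeds into two distinct factors $D_i, D_j$; then $E \times E$ sits in $D_i \times D_j$, and I would form an unrefinable chain $D_i \times D_j > \cdots > E \times E > E > \cdots > 0$. The upper segment (from $D_i \times D_j$ down to $E \times E$) has length at most $l(D_i \times D_j) - 2l(E)$, since otherwise appending a maximum-length chain of $E \times E$ would produce an unrefinable chain of $D_i \times D_j$ exceeding $l(D_i \times D_j)$; and the lower segment (from $E$ to $0$) has length at most $l(E)$. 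Hence the whole chain has length at most $l(D_i \times D_j) - l(E) + 1$. Since $\cd(D_i \times D_j) = 0$ (as a quotient of $A$), every unrefinable chain of $D_i \times D_j$ has length $l(D_i \times D_j)$, forcing $l(E) \le 1$ and so $E \cong k$, a contradiction.
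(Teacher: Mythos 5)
Your proof is correct and follows essentially the same route as the paper: reduce to $A/J(A)$, extract $\cd(S_i)=0$ for each simple factor, invoke Lemma \ref{simplestuff}, rule out $\delta\geq 2$ with a short unrefinable chain through the diagonal $M_2(k)$ and $T_2(k)$, and obtain uniqueness of embedding by passing a chain through $E\times E > E$. The one cosmetic difference is that you establish $\cd(A/J(A))\le\cd(A)$ and $\cd(S_i)=0$ by a lifting/preimage argument, whereas the paper uses additivity of length together with subadditivity of depth (Corollary \ref{reduction}); these are equivalent. A tiny imprecision: when ruling out embeddings into the $M_2(k)$ factor, you speak of "subfields" of $M_2(k)$, but $E$ is a priori only a division subalgebra; this is harmless, since by Theorem \ref{simplesubalgebras} any division subalgebra of $M_2(k)$ has dimension $1$ or $2$ over $k$, and a $2$-dimensional division algebra is automatically a field, so the no-quadratic-extension hypothesis still applies.
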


\noindent We now prove a partial converse to Lemma \ref{iioneway}.

\begin{lemma}\label{iioneway} Let $A$ be a semisimple algebra that satisfies condition $(*)$. Then $\cd(A)=0$.
\begin{proof} We write $A = M_2(k)^{\delta} \times \prod_{i=1}^m D_i$ where $\delta=0$ or $1$. We have $\lambda(A) \leq l(A)=4\delta+\sum_{i=1}^m l(D_i)$ by Corollary \ref{reduction} and Lemma \ref{length=dim}. We will show that $\lambda(A) \geq 4\delta+\sum_{i=1}^m l(D_i)$ by induction on $\lambda(A)$.

\vspace{2mm}\noindent Let $M$ be a maximal subalgebra of $A$. We first consider the case where $M$ is conjugate to $M_2(k)^{\delta} \times E_j \times \prod_{i \neq j} D_i$ for some $1\leq j \leq m$ where $E_j$ is a maximal subalgebra of $D_j$. Observe that $M$ is semisimple and satisfies condition $(*)$ and so $\lambda(M) \geq 4\delta+\sum_{i=1}^m l(D_i)-1$ by the inductive hypothesis.

\vspace{2mm}\noindent Since $A$ satisfies condition $(*)$, no two simple factors of $A$ are isomorphic unless they are both isomorphic to $k$. Hence, by Theorem \ref{classthm2}, the only other possibility for $M$ is when $\delta=1$ and $M$ is conjugate to $M_0 \times \prod_{i=1}^m D_i$ for some maximal subalgebra $M_0$ of $M_2(k)$. By Theorem \ref{classthm1}, since there are no quadratic field extensions of $k$, $M_0$ is conjugate to $T_2(k)$ in $M_2(k)$. Let $M_1:= k^2 \times \prod_{i=1}^m D_i$. Observe that $M_1$ satisfies condition $(*)$ and $M/J(M) \cong M_1$. Hence $\lambda(M) \geq \lambda(M_1)+1 \geq \sum_{i=1}^m l(D_i)+3$ by Lemma \ref{depthlowerbound} and the inductive hypothesis.

\vspace{2mm}\noindent We have shown that $l(M) \geq 4\delta+\sum_{i=1}^m l(D_i) -1$. So $\lambda(A) = l(A)$.
\end{proof}
\end{lemma}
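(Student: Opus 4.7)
The plan is to combine the formula for $l(A)$ with a matching lower bound for $\lambda(A)$. Writing $A \cong M_2(k)^\delta \times \prod_{i=1}^m D_i$ with $\delta \in \{0,1\}$, Corollary \ref{reduction} and Lemma \ref{length=dim} give $l(A) = 4\delta + \sum_{i=1}^m l(D_i)$, and $\lambda(A) \le l(A)$ is automatic. It therefore suffices to establish $\lambda(A) \ge 4\delta + \sum l(D_i)$. I would proceed by induction on $\dim A$, showing that every maximal subalgebra $M$ of $A$ satisfies $\lambda(M) \ge 4\delta + \sum l(D_i) - 1$.

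Since $A$ is semisimple, Theorem \ref{classthm2} leaves only three possibilities for such an $M$. The first is the local replacement of some $D_j$ by a maximal subalgebra $E_j \leq D_j$: then $E_j$ is again a division algebra (by the argument used in Lemma \ref{multiplicativity}), and the assumption $\cd(D_j)=0$ forces $l(E_j) = l(D_j)-1$ and $\cd(E_j)=0$, since every unrefinable chain in $E_j$ extends to one of $D_j$ of common length $l(D_j)$. The second possibility, relevant only when $\delta=1$, is the replacement of $M_2(k)$ by its unique-up-to-conjugacy maximal subalgebra $T_2(k)$, which by Theorem \ref{classthm1} is the only option when no quadratic extension of $k$ exists; here $M \cong T_2(k) \times \prod D_i$ is no longer semisimple, so I would invoke Lemma \ref{depthlowerbound} to reduce to the semisimple quotient $M/J(M) \cong k^2 \times \prod D_i$ at the cost of one unit of depth. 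The third case is the diagonal embedding of Theorem \ref{classthm2}(ii), which requires two isomorphic simple factors of $A$; the embedding clause of $(*)$ forces these repeated factors to be copies of $k$ among the $D_i$'s, so $M$ is again semisimple with one fewer $k$-factor.

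In each case I would verify that the new semisimple algebra ($M$ itself in cases one and three, or $M/J(M)$ in case two) again satisfies condition $(*)$, and then apply the inductive hypothesis; a short arithmetic check then yields $\lambda(M) \geq 4\delta + \sum l(D_i) - 1$. The main obstacle is verifying the embedding clause of $(*)$ for the new algebra, namely that any nontrivial division subalgebra $E$ must abstractly embed in precisely one of the division factors. I would handle this by viewing $E$ as a division subalgebra of $A/J(A)$ (using the diagonal inclusion $k^2 \hookrightarrow M_2(k)$ in case two) and applying $(*)$ for $A$, together with the observations that $E \not\cong k$ cannot embed in any $k$-factor and, in case two, cannot embed in $M_2(k)$ without forcing $E$ to embed into two distinct simple factors of $A/J(A)$ and hence violate $(*)$ for $A$.
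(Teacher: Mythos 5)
Your proof is correct and follows essentially the same route as the paper: reduce to showing that every maximal subalgebra $M$ of $A$ satisfies $\lambda(M) \geq 4\delta + \sum l(D_i) - 1$, classify $M$ via Theorem \ref{classthm2}, handle the $T_2(k)$ case by passing to $M/J(M)$ with Lemma \ref{depthlowerbound}, and apply the inductive hypothesis. You are somewhat more careful than the paper in two respects: you explicitly address the diagonal-embedding case of Theorem \ref{classthm2}(ii), which arises when two of the $D_i$ are copies of $k$ --- the paper's phrase ``the only other possibility for $M$'' silently skips it, though the resulting subalgebra is abstractly isomorphic to one already produced in the first case (take $D_j \cong k$ and $E_j = 0$), so the depth estimate is unchanged --- and you spell out why condition $(*)$ passes to $M$ (and to $M/J(M)$ in the parabolic case), which the paper asserts without proof.
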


\noindent A quiver is a directed graph that allows loops and multiple arrows between two vertices. Associated to a quiver $Q$ is a path algebra $kQ$ that is generated by the set of all paths in $Q$ with relations consistent with concatenation of paths. The ideal $kQ_+$ of $kQ$ is generated by all paths of length at least $1$. 
An admissible ideal of $kQ$ is a two-sided ideal $I$ of $kQ$ that satisfies $(kQ_+)^l \subseteq I \subseteq (kQ_+)^2$ for some integer $l \geq 2$.

\vspace{2mm}\noindent We complete the proof of part $(ii)$ of Theorem \ref{dimthings} in the following lemma. Recall that $A$ is \textit{basic split} if $A/J(A) \cong k^b$ for some $b \geq 0$ (this includes nilpotent algebras).

\begin{lemma}\label{basicsplit} Let $A$ be a finite dimensional basic split algebra. Then $l(A)=\lambda(A)=\dim A$.
\begin{proof} The case where $A$ is nilpotent is covered by Lemma \ref{nilpotent}. So assume that $A$ is not nilpotent. It follows from Wedderburn's Theorem that a non-nilpotent algebra $A$ is basic split if and only if all simple modules of $A$ are of dimension $1$. 
Hence $A$ is isomorphic to the quotient of the path algebra of a quiver by an admissible ideal (see $\S 4.1$ of \cite{B} or $\S 4.1$ of \cite{IS}). Then we can apply Proposition $4.2$ of \cite{IS} which tells us that every maximal subalgebra of $A$ has codimension $1$. We are done as all subalgebras of a basic split algebra are also basic split.
\end{proof}
\end{lemma}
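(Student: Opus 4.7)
The plan is to reduce the claim to two facts: (a) every maximal subalgebra of a basic split algebra has codimension $1$, and (b) every subalgebra of a basic split algebra is again basic split. Once (a) and (b) are established, any unrefinable chain $A=A_0>A_1>\cdots>A_t=0$ consists of basic split subalgebras (by (b)), and at each step $A_{i+1}$ is maximal in $A_i$ of codimension $1$ (by (a) applied to $A_i$). Therefore $t=\dim A$ for every unrefinable chain, whence $\lambda(A)=l(A)=\dim A$.

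First I would dispose of the nilpotent case: here Lemma \ref{nilpotent} gives the conclusion directly, so I may assume $A$ is not nilpotent. To set up (a), I would invoke Wedderburn's Theorem to observe that a non-nilpotent finite dimensional algebra is basic split if and only if every simple $A$-module is one-dimensional, and recall the classical fact that such an algebra is isomorphic to $kQ/I$ for some finite quiver $Q$ and some admissible ideal $I\subseteq kQ$ (see for example $\S 4.1$ of \cite{B} or $\S 4.1$ of \cite{IS}). Then (a) is precisely Proposition $4.2$ of \cite{IS}.

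For (b), let $B\le A$ be any subalgebra. By Wedderburn's Theorem $B/J(B)=\prod_j M_{m_j}(\Delta_j)$ with each $\Delta_j$ a division algebra. Since $A$ is basic split, $A/J(A)\cong k^b$, and any simple subalgebra of the semisimple algebra $B/J(B)$ lifts to a simple subalgebra of $B$, whose image in $A/J(A)\cong k^b$ is a quotient by a nilpotent, hence a nonzero semisimple quotient that must be a product of copies of $k$; this forces $m_j=1$ and $\Delta_j\cong k$ for every $j$. Hence $B$ is basic split.

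The main obstacle is really just organising (b) carefully; (a) is a direct citation, and the induction then runs without additional work. I expect the verification of (b) to require a brief but careful argument about how simple factors of $B/J(B)$ sit inside $A/J(A)$, as sketched above, since $J(B)$ need not equal $B\cap J(A)$ in general.
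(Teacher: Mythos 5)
Your proposal follows essentially the same route as the paper: dispose of the nilpotent case via Lemma \ref{nilpotent}, observe via Wedderburn that non-nilpotent basic split means all simple modules are one-dimensional, invoke the quiver presentation and Proposition $4.2$ of \cite{IS} to get that maximal subalgebras have codimension $1$, and combine this with the fact that subalgebras of basic split algebras are basic split. The one place where you go beyond the paper --- your proof of claim (b) --- is exactly where a subtle gap appears. You assert that each simple factor $M_{m_j}(\Delta_j)$ of $B/J(B)$ ``lifts to a simple subalgebra of $B$.'' This lifting is an instance of Wedderburn's Principal Theorem, which requires $B/J(B)$ to be separable; over an arbitrary field $k$ that hypothesis is not automatic, and the lemma is stated with no assumption on $k$. (Note you cannot apply the theorem to $A$ in place of $B$ to get around this, since you are trying to deduce the structure of $B/J(B)$, not lift $A/J(A)$.)

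The detour is also unnecessary. You remark that $J(B)$ need not equal $B\cap J(A)$ --- true, but you only need the easy inclusion $B\cap J(A)\subseteq J(B)$, which holds because $B\cap J(A)$ is a nilpotent ideal of $B$. Then $\pi(B)\cong B/(B\cap J(A))$ is a subalgebra of $A/J(A)\cong k^b$. Any subalgebra of $k^b$ is reduced (it inherits the absence of nilpotents from $k^b$) and commutative, hence a product of field extensions of $k$; for each simple component, projecting to a coordinate on which its identity acts as $1$ gives a unital embedding into $k$, so each component is $k$, and $\pi(B)\cong k^c$. Finally $B/J(B)$ is a quotient of $B/(B\cap J(A))\cong k^c$, and all quotients of $k^c$ are again of this form, so $B$ is basic split. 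This removes the appeal to lifting and works over any $k$. With that replacement your argument is complete and matches the paper's.
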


\section{Proof of Theorem \ref{solvrad}}\label{proofsolvrad}

\noindent We first consider the case where $A=M_n(D)$ for some integer $n>1$ and division algebra $D$.

\vspace{2mm}\noindent Let $n_0:=n$ and define $n_i$ iteratively by $n_i:=\lfloor n_{i-1}/2 \rfloor$. Let $t$ be minimal such that $n_t=1$. In the proof of Theorem \ref{depth} we constructed an unrefinable chain \begin{equation}\label{importantchain} A > ... > M_n(k) > ... > M_{n_1}(k)>... > M_{n_2}(k) >... >M_{n_t}(k) > 0\end{equation} where the subchain between $A$ and $M_n(k)$ has length $\lambda(D)-1$ and, for each $0 \leq i < t$, the subchain between $M_{n_i}(k)$ and $M_{n_{i+1}}(k)$ has length $3$ (resp. $6$) if $n_i$ is even (resp. odd).

\vspace{2mm}\noindent Let $l_1$ denote the length of the chain $(\ref{importantchain})$. So $\lambda(A) \leq l_1$. Observe that $l_1-\lambda(D)$ depends only upon $n$. So define $f:\N \to \N$ by $f(n):=l_1-\lambda(D)$. It is not hard to see that $f(n) \leq 2n$ for all $n$. 

\vspace{2mm}\noindent If $n\geq 3$ then \begin{align*}\dim A &=n^2[D:k] \\ & \leq 9\big(n(n-1)[D:k]/2- 2\big) \\ & \leq 9\big(n(n-1)[D:k]/2-2+2n-f(n)\big) \\ &\leq 9\big(l(A)-\lambda(D) - f(n)\big) \\ & \leq 9\cd(A)\end{align*} using Theorem \ref{length}.

\vspace{2mm}\noindent If $n=2$ and $D \not\cong k$ then $$\dim A =4[D:k] \leq 9\big(l(A)-\lambda(D)-3\big) \leq 9\cd(A)$$ using Theorem \ref{length} and since $l(D)\geq 2$. Note that $f(2)=3$.

\vspace{2mm}\noindent We have thus far shown that $\dim A \le 9 \cd(A)$ for all simple algebras $A=M_n(D)$ with $n >1$ except for $M_2(k)$.

\vspace{2mm}\noindent We now consider the general case. That is, let $A$ be any algebra. Write $A/R(A) = \prod_{i=1}^m A_i$ where each $A_i$ is a simple subalgebra of $A$ (and is not a division algebra).

\vspace{2mm}\noindent Since the length function is additive, and the depth function is sub-additive, we see that
the chain difference is super-additive, namely, for an ideal $I \subseteq A$ we have
\[
\cd(A) \ge \cd(I) + \cd(A/I).
\]
Then $\cd(A) \ge \cd\!\big(\hspace{-0.2mm}A/R(A)\big) \ge \sum_{i=1}^m \cd(A_i)$
by super-additivity. If $A_i \not\cong M_2(k)$ for each $i$ then we are done. So, without loss of generality, let $j \in \{1,...,m\}$ be such that $A_i \cong M_2(k)$ for all $1 \leq i \leq j$ and $A_i \not\cong M_2(k)$ for all $j < i \leq m$.

\vspace{2mm}\noindent Assume that $j > 1$. The chain $$\prod_{i=1}^j A_i  > \prod_{i=1}^{j-1} A_i >...>A_1$$ is unrefinable of length $j-1$ and so $\lambda\big(\prod_{i=1}^j A_i\big) \leq j-1+ \dim M_2(k)=j+3$. Note that $l\big(\prod_{i=1}^j A_i\big) = 4j$ by Theorem \ref{length}. So $\dim \prod_{i=1}^j A_i =4j \leq 9\cd\!\big(\prod_{i=1}^j A_i\big)$ since $j > 1$. Then $\dim A/R(A) \le 9 \cd(A)$ by super-additivity.

\vspace{2mm}\noindent We next assume that $j=1$ and $m >1$. Write $A_2 = M_n(D)$ and consider the unrefinable chain $(\ref{importantchain})$ of $A_2$. We already showed that $\dim A_2 \leq 9\big(l(A_2)-\lambda(D)-f(n)\big)$. By construction, the chain $(\ref{importantchain})$ of $A_2$ passes through $M_2(k)$. Observe that the chain $$M_2(k)^2 >M_2(k)>T_2(k)>k^2>k>0$$ is unrefinable of length $5$ and hence $\lambda(A_1 \times A_2) \leq \lambda(D)+f(n)+1$. Note that $l(A_1\times A_2)=4+l(A_2)$ by Theorem \ref{length}. It follows that $$\dim (A_1 \times A_2) =4+\dim A_2 \leq 4+9\big(l(A_2)-\lambda(D)-f(n)\big) 
\leq 9\cd(A_1 \times A_2).$$ Then $\dim A/R(A) \le 9 \cd(A)$ by super-additivity.

\vspace{2mm}\noindent At this stage we have shown that $\dim A/R(A) \le 9 \cd(A)$ unless $A/R(A)\cong M_2(k)$.

\vspace{2mm}\noindent Finally, we assume that $A/R(A)\cong M_2(k)$ and there exists a quadratic field extension of $k$. Then $A$ does not satisfy condition $(*)$ and so $\cd(A) \geq 1$ by Theorem \ref{dimthings}$(ii)$. So $\dim A/R(A) =4 \le 9 \cd(A)$. This completes the proof.

\section{Proof of Theorem \ref{ld}}

\noindent First note that, for various fields $k$ (including all number fields and all finite fields), Theorem \ref{depthfinitefields}
immediately shows that, for $k$-algebras $A$, $\dim A$ is not bounded above in terms of $\lambda(A)$.
We now prove the stronger result, that $\dim A$ is not bounded above in terms of $l(A)$, as stated in part $(i)$ of Theorem \ref{ld}.

\vspace{2mm}\noindent Let $k = \Q$. For each prime $p$ there exists a Galois extension $F(p)$ of $k$
with $\Gal(F(p)/k) = C_p$, the cyclic group of order $p$. Set $A = F(p)$.

\vspace{2mm}\noindent By Lemma \ref{multiplicativity} we have $l(A) = l(C_p)+1 = 2$ for all $p$.
On the other hand $\dim A = |\Gal(F(p)/k)| = |C_p| = p$, which tends to infinity as $p \to \infty$.
Part $(i)$ of Theorem \ref{ld} follows.

\vspace{2mm}\noindent Let us now prove part $(ii)$ of the theorem. Let $k$ be a field such that $[\overline{k}:k]<\infty$.

\vspace{2mm}\noindent Let $A$ be an algebra. Write $A/J(A) = \prod_{i=1}^m M_{n_i}(D_i)$. Without loss of generality we may assume that $\dim M_{n_1}(D_1) \geq \dim M_{n_i}(D_i)$ for all $1 \leq i \leq m$. By Corollary \ref{depthlowerboundcor} we have
\[
\lambda(A) \geq \lambda\big(M_{n_i}(D_i)\big)+m-1
\]
for all $1 \leq i \leq m$.

\vspace{2mm}\noindent Since $[\overline{k}:k]<\infty$, by Corollary VIII.$9.2$ of \cite{L}, either $k=\overline{k}$ or $[\overline{k}:k]=2$ and $k$ has characteristic $0$.

\vspace{2mm}\noindent Suppose first that $k=\overline{k}$, namely $k$ is algebraically closed. Then $D_i=k$ for each $i$. By Theorem \ref{depth} it follows that
\[
\lambda(A) \ge 3 \log_2 n_1 + m.
\]
This yields
\[
2^{\lambda(A)} \geq n_1^3 \cdot 2^m.
\]
Hence $n_1 \le 2^{\lambda(A)/3 - m/3}$ and
\[
\dim A/J(A) = \sum_{i=1}^m n_i^2 \le m \cdot n_1^2 \le m 2^{-2m/3} \cdot 2^{2\lambda(A)/3}.
\]
Note that $m 2^{-2m/3} \le 1$ for all $m$, hence
\[
\dim A/J(A) \le 2^{2\lambda(A)/3}.
\]
Next suppose that $[\overline{k}:k]=2$. By Theorem \ref{depth} we have
\[
\lambda(A) \ge \log_2\!\big(\!\dim M_{n_1}(D_1)\big) + m.
\]

\vspace{2mm}\noindent Rearranging, this yields
\[
\dim M_{n_1}(D_1) \le 2^{\lambda(A) - m}.
\]
Hence
\[
\dim A/J(A) = \sum_{i=1}^m \dim M_{n_i}(D_i) \le m \dim M_{n_1}(D_1) \le m2^{-m} \cdot 2^{\lambda(A)}
\]
Note that $m 2^{-m}\le1$ for all $m$. Hence
\[
\dim A/J(A) \le 2^{\lambda(A)}.
\]
This completes the proof when $J(A)=0$.

\vspace{2mm}\noindent Let $g:\N \to \N$ be defined by $g(x):=2^x$. We have thus far shown that $\dim A/J(A) \le g(\lambda(A))$ (for any field $k$ such that $[\overline{k}:k]<\infty$). Let $f(x):=2g(x)^2$ for all $x \in \N$. We will show that $\dim A \le f(\lambda(A))$ by induction on $\lambda(A)$.

\vspace{2mm}\noindent If $\lambda(A)=1$ then $A\cong k$ and certainly $\dim A \leq f(\lambda(A))$. Let $B$ be a maximal subalgebra of $A$ such that $\lambda(B)=\lambda(A)-1$. If $B$ is of semisimple type then $J(A) \subseteq B$ and so $$\dim A \leq \dim B +\dim A/J(A) \leq f(\lambda(B))+g(\lambda(A)) \leq f(\lambda(A))$$ by the inductive hypothesis. It remains to consider the case where $B$ is of split type. We will need the following lemma.

\begin{lemma}\label{modulelemma} Let $S$ be a semisimple algebra and let $M$ be a simple $S$-bimodule. Then $\dim M \leq (\dim S)^2$.
\begin{proof} Let $S^{op}$ denote the opposite algebra of $S$. By Proposition $10.1$ of \cite{P}, there is an equivalence between the category of $S$-bimodules and the category of left $S \otimes_k S^{op}$-modules. Precisely, $M$ has the structure of a simple left $S \otimes_k S^{op}$-module that satisfies $(s \otimes s')m=(sm)s'=s(ms')$ for $s,s' \in S$ and $m \in M$.

\vspace{2mm}\noindent Observe that $S \otimes_k S^{op}$ is a semisimple algebra and so $S \otimes_k S^{op} \cong \prod_{i=1}^t M_{r_i}(\Delta_i)$ for some division algebras $\Delta_i$ by Wedderburn's Theorem. It is well known (see Proposition $2.3$ of \cite{P} or Theorem VIII.$2$ of \cite{A}) that any simple left module of $\prod_{i=1}^tM_{r_i}(\Delta_i)$ is isomorphic to $(\Delta_i)^{r_i}$ for some $i=1,...,t$. 
Hence $\dim M \leq \dim \!\big(\hspace{-0.1mm}S \otimes_k S^{op}\big) = (\dim S)^2$.
\end{proof}
\end{lemma}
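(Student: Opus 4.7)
The plan is to transfer the problem from $S$-bimodules to left modules over the enveloping algebra $E := S \otimes_k S^{\mathrm{op}}$, using the standard equivalence of categories under which a simple $S$-bimodule $M$ corresponds to a simple left $E$-module with action $(s \otimes s') \cdot m = sms'$. The underlying $k$-vector space, and hence $\dim_k M$, is preserved by this equivalence, so the problem reduces to bounding the dimension of a simple left $E$-module in terms of $\dim_k S$.

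Next, I would invoke semisimplicity of $E$ and apply Wedderburn's Theorem to write $E \cong \prod_{i=1}^t M_{r_i}(\Delta_i)$ for some division algebras $\Delta_i$ and positive integers $r_i$. The classification of simple left modules of such a product (as cited from \cite{P} or \cite{A}) then shows that $M$ is isomorphic to $\Delta_i^{r_i}$ for some index $i$, giving
\[
\dim_k M = r_i [\Delta_i : k].
\]

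Finally, the bound would follow by the crude estimate $r_i \leq r_i^2$:
\[
\dim_k M = r_i [\Delta_i : k] \leq r_i^2 [\Delta_i : k] \leq \sum_{j=1}^t r_j^2 [\Delta_j : k] = \dim_k E = (\dim_k S)^2,
\]
where the last equality uses that $S^{\mathrm{op}}$ has the same $k$-dimension as $S$ and that tensor product over $k$ multiplies dimensions.

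The main obstacle, and really the only non-cosmetic step, is justifying that $E = S \otimes_k S^{\mathrm{op}}$ is semisimple so that Wedderburn applies; once this is in hand, the remainder is a one-line dimension count. I would handle this either by quoting a standard reference on separable/enveloping algebras or by reducing to the factors $M_{n_i}(D_i)$ of $S$ and noting that each enveloping algebra is a product of matrix algebras over division rings of appropriate sizes.
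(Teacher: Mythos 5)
Your proof follows essentially the same route as the paper's: pass to the enveloping algebra $E = S\otimes_k S^{\mathrm{op}}$, apply Wedderburn, and read off the dimension bound from the classification of simple modules over a product of matrix algebras. You make the final dimension count a little more explicit than the paper does, and you rightly flag that semisimplicity of $E$ is the one non-trivial input (which does hold in the paper's application, since the lemma is invoked over a perfect field, but which the paper simply asserts without comment).
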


\noindent Observe that $k$ is perfect since it is either algebraically closed or it has characteristic $0$. So $A/J(A)$ is a separable algebra. Then there is a semisimple subalgebra $A_0$ of $A$ such that $A=A_0 \oplus J(A)$ (Wedderburn's Principal Theorem). By Theorem $0.1$ of \cite{IS}, $B$ is conjugate to $A_0 \oplus H$ for some maximal $A_0$-subbimodule $H$ of $J(A)$. So $J(A)/H$ is a non-trivial simple $A_0$-bimodule. Then we have $$\dim A=\dim B+\dim J(A)/H \leq \dim B+(\dim A_0)^2 \leq f(\lambda(B))+(g(\lambda(A)))^2 \leq f(\lambda(A))$$ by Lemma \ref{modulelemma} and the inductive hypothesis.

\vspace{2mm}\noindent This completes the proof of part $(ii)$, with $f:\N \to \N$ as a suitable function.

\end{document}